\newtheorem{theorem}{Theorem}
\newtheorem{remark}[theorem]{Remark}
\newtheorem{proposition}[theorem]{Proposition}
\newtheorem{corollary}[theorem]{Corollary}
\DeclareMathOperator*{\divergenz}{div}              %
\DeclareMathOperator*{\ints}{int}         %
\DeclareMathOperator*{\essinf}{ess \,inf}         %
\DeclareMathOperator*{\ww}{w}         %
\DeclareMathOperator*{\Ss}{S}         %
\newcommand{\N}{\mathbb{N}}
\newcommand{\R}{\mathbb{R}}
\newcommand{\Lp}[1]{L^{#1}(\Omega)}
\newcommand{\Wp}[1]{W^{1,#1}(\Omega)}
\newcommand{\Wpzero}[1]{W^{1,#1}_0(\Omega)}
\newcommand{\lan}{\langle}
\newcommand{\ran}{\rangle}
\newcommand{\eps}{\varepsilon}
\newcommand{\ph}{\varphi}
\newcommand{\into}{\int_{\Omega}}
\newcommand{\weak}{\overset{\ww}{\to}}
\newcommand{\Linf}{L^{\infty}(\Omega)}
\newcommand{\close}{\overline{\Omega}}
\newcommand{\interior}{\ints \left(C^1_0(\overline{\Omega})_+\right)}
\newcommand{\cprime}{$'$}
\renewcommand{\l}{\left}
\renewcommand{\r}{\right}
\newcommand*\diff{\mathop{}\!\mathrm{d}}
\newcommand{\W}{W^{1,p(\cdot)}_{0}(\Omega)}
\newcommand{\LP}{L^{p(\cdot)}(\Omega)}
\numberwithin{theorem}{section}
\numberwithin{equation}{section}
\title[Positive solutions for singular anisotropic $(p,q)$-equations]{Positive solutions for singular anisotropic $(p,q)$-equations}
\author[N.\,S.\,Papageorgiou]{Nikolaos S.\,Papageorgiou}
\address[N.\,S.\,Papageorgiou]{National Technical University, Department of Mathematics, Zografou Campus, Athens 15780, Greece}
\email{npapg@math.ntua.gr}
\author[P.\,Winkert]{Patrick Winkert}
\address[P.\,Winkert]{Technische Universit\"{a}t Berlin, Institut f\"{u}r Mathematik, Stra\ss e des 17.\,Juni 136, 10623 Berlin, Germany}
\email{winkert@math.tu-berlin.de}
\subjclass{35J60, 35J92}
\keywords{Anisotropic $(p,q)$-operator, comparison principles, maximum principle, minimal positive solution, singular term, regularity theory}
\begin{document}

\begin{abstract}
	In this paper we consider a Dirichlet problem driven by an aniso\-tropic $(p,q)$-differential operator and a parametric reaction having the competing effects of a singular term and of a superlinear perturbation. We prove a bifurcation-type theorem describing the changes in the set of positive solutions as the parameter moves. Moreover, we prove the existence of a minimal positive solution and determine the monotonicity and continuity properties of the minimal solution map.
\end{abstract}

\maketitle

\section{Introduction}

Let $\Omega \subseteq \R^N$ be a bounded domain with a $C^2$-boundary $\partial \Omega$. In this paper, we deal with the following parametric anisotropic singular $(p,q)$-equation
\begin{align}\tag{P$_\lambda$}\label{problem}
  \begin{split}
    &-\Delta_{p(\cdot)}u-\Delta_{q(\cdot)} u = \lambda \l[ u^{-\eta(x)}+f(x,u)\r]\quad \text{in } \Omega, \\
    &u\big|_{\partial \Omega}=0, \quad u>0, \quad \lambda>0.
   \end{split}
\end{align}

Given $r \in C(\close)$ we define
\begin{align*}
	r_-=\min_{x\in \close}r(x) \quad\text{and}\quad r_+=\max_{x\in\close} r(x)
\end{align*}
and introduce the set 
\begin{align*}
E_1=\l \{r \in C(\close)\, : \, 1<r_- \r\}.
\end{align*}
For $r \in E_1$ the anisotropic $r$-Laplace differential operator is defined by
\begin{align*}
        \Delta_{r(\cdot)} u = \divergenz \l(|\nabla u|^{r(x)-2} \nabla u \r) \quad\text{for all }u \in \Wpzero{r(\cdot)}.
\end{align*}

This operator is nonhomogeneous on account of the variable exponent $r(\cdot)$. If $r(\cdot)$ is a constant function, then we have the usual $r$-Laplace differential operator. In problem \eqref{problem} we have the sum of two such anisotropic differential operators with distinct exponents. So, even in the case of constant exponents, the differential operator in \eqref{problem} is not homogeneous. This makes the study of problem \eqref{problem} more difficult. Boundary values problems driven by a combination of differential operators of different nature, such as $(p,q)$-equations, arise in many mathematical models of physical processes. We mention the works of Benci-D'Avenia-Fortunato-Pisani \cite{1-Benci-DAvenia-Fortunato-Pisani-2000}, where $(p,2)$-equations are used as a model for elementary particles in order to produce soliton-type solutions. We also mention the works of Cherfils-Il\cprime yasov \cite{3-Cherfils-Ilyasov-2005}, where the authors study the steady state solutions of reaction-diffusion systems and of Zhikov \cite{23-Zhikov-1986,24-Zhikov-2011} who studied problems related to nonlinear elasticity theory.

In the reaction of \eqref{problem} we have the competing effects of a singular term $s \to s^{-\eta(x)}$ and of a Carath\'eodory function $f\colon \Omega\times\R\to\R$, that is, $x\to f(x,s)$ is measurable for all $s\in\R$ and $s\to f(x,s)$ is continuous for a.\,a.\,$x\in\Omega$. We assume that $f(x,\cdot)$ exhibits $(p_+-1)$-superlinear growth uniformly for a.\,a.\,$x\in\Omega$ as $s\to +\infty$ but need not satisfy the Ambrosetti-Rabinowitz condition (the AR-condition for short) which is common in the literature when dealing with superlinear problems. The sum of the two terms is multiplied with a parameter $\lambda>0$.

Applying a combination of variational tools from the critical point theory along with truncation and comparison techniques, we prove a bifurcation-type theorem describing the changes in the set of positive solutions as the parameter $\lambda$ moves on the open positive semiaxis $\overset{\circ}{\R}_+=(0,+\infty)$. We also show that for every admissible parameter $\lambda>0$, problem \eqref{problem} has a smallest positive solution $\tilde{u}_\lambda$ and we determine the monotonicity and continuity properties of the minimal solution map $\lambda \to \tilde{u}_\lambda$.

Boundary values problems driven by the anisotropic $p$-Laplacian have been studied extensively in the last decade. We refer to the books of Diening-Harjulehto-H\"{a}st\"{o}-R$\mathring{\text{u}}$\v{z}i\v{c}ka \cite{4-Diening-Harjulehto-Hasto-Ruzicka-2011} and R\u{a}dulescu-Repov\v{s} \cite{19-Radulescu-Repovs-2015} and the references therein. In contrast, the study of singular anisotropic equations is lagging behind. There are very few works on this subject. We mention two such papers which are close to our problem \eqref{problem}. These are the works of Byun-Ko \cite{2-Byun-Ko-2017} and Saoudi-Ghanmi \cite{20-Saoudi-Ghanmi-2017} who examine equations driven by the anisotropic $p$-Laplacian and the parameter multiplies only the singular term. Moreover, the overall conditions on the data of the problem are more restrictive, see hypotheses ($p_M$) in \cite{2-Byun-Ko-2017} and hypotheses (H1)--(H4) in \cite{20-Saoudi-Ghanmi-2017}. Finally, we mention the isotropic works of the authors \cite{Papageorgiou-Winkert-2020,18-Papageorgiou-Winkert-2019} on singular equations driven by the $(p,q)$-Laplacian and the $p$-Laplacian, respectively.

\section{Preliminaries and Hypotheses}

In this section we recall some basic facts about Lebesgue and Sobolev spaces with variable exponents. We refer to the book of Diening-Harjulehto-H\"{a}st\"{o}-R$\mathring{\text{u}}$\v{z}i\v{c}ka \cite{4-Diening-Harjulehto-Hasto-Ruzicka-2011} for details.

Let $M(\Omega)$ be the space of all measurable functions $u\colon \Omega\to\R$. We identify two such functions when they differ only on a Lebesgue-null set. Given $r \in E_1$, the anisotropic Lebesgue space $\Lp{r(\cdot)}$ is defined by
\begin{align*}
	\Lp{r(\cdot)}=\l\{u \in M(\Omega)\,:\, \into |u|^{r(x)} \diff x<\infty \r\}.
\end{align*}
This space is equipped with the Luxemburg norm defined by
\begin{align*}
	\|u\|_{r(\cdot)} =\inf \l \{\mu>0 \, : \, \into \l(\frac{|u|}{\mu}\r)^{r(x)}\diff x \leq 1 \r\}.
\end{align*}
The modular function related to these spaces is defined by
\begin{align*}
	\varrho_{r(\cdot)}(u) =\into |u|^{r(x)}\diff x \quad\text{for all } u\in\Lp{r(\cdot)}.
\end{align*}
It is clear that $\|\cdot\|_{r(\cdot)}$ is the Minkowski functional of the set $\{u \in \Lp{r(\cdot)}\,:\,\varrho_{r(\cdot)}(u)\leq 1 \}$. The following proposition states the relation between $\|\cdot\|_{r(\cdot)}$ and the modular $\varrho_{r(\cdot)}\colon\Lp{r(\cdot)}\to\R$.

\begin{proposition}\label{proposition_1}
	Let $r\in E_1$, let $u \in\Lp{r(\cdot)}$ and let $\{u_n\}_{n\in\N}\subseteq \Lp{r(\cdot)}$. The following assertions hold:
	\begin{enumerate}
		\item[(a)]
		$\|u\|_{r(\cdot)}=\mu \quad\Longleftrightarrow\quad \varrho_{r(\cdot)}\l(\frac{u}{\mu}\r)=1$;
		\item[(b)]
		$\|u\|_{r(\cdot)}<1$ (resp. $=1$, $>1$) $\quad\Longleftrightarrow\quad \varrho_{r(\cdot)}(u)<1$ (resp. $=1$, $>1$);
		\item[(c)]
		$\|u\|_{r(\cdot)}\leq 1$ $\quad\Longrightarrow\quad$ $\|u\|_{r(\cdot)}^{r_+} \leq \varrho_{r(\cdot)}(u) \leq \|u\|_{r(\cdot)}^{r_-}$;
		\item[(d)]
		$\|u\|_{r(\cdot)}\geq 1$ $\quad\Longrightarrow\quad$ $\|u\|_{r(\cdot)}^{r_-} \leq \varrho_{r(\cdot)}(u) \leq \|u\|_{r(\cdot)}^{r_+}$;
		\item[(e)]
		$\|u_n\|_{r(\cdot)} \to 0$ (resp. $\to \infty$)  $\quad\Longleftrightarrow\quad\varrho_{r(\cdot)}(u_n)\to 0$ (resp. $\to \infty$);
		\item[(f)]
		$\|u_n-u\|_{r(\cdot)}\to 0 \quad\Longleftrightarrow\quad \varrho_{r(\cdot)}(u_n-u)\to 0$.
	\end{enumerate}
\end{proposition}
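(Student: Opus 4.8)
The key tool is the continuity and strict monotonicity of the map $\mu \mapsto \varrho_{r(\cdot)}(u/\mu)$ on $(0,\infty)$ for fixed $u \neq 0$. I would first establish (a). Fix $u \neq 0$ (the case $u=0$ being trivial). The function $g(\mu) = \varrho_{r(\cdot)}(u/\mu) = \into |u|^{r(x)} \mu^{-r(x)}\,dx$ is continuous on $(0,\infty)$ by dominated convergence (using $1 < r_- \le r(x) \le r_+ < \infty$ to dominate on compact $\mu$-intervals), strictly decreasing, tends to $+\infty$ as $\mu \to 0^+$ (Fatou), and tends to $0$ as $\mu \to \infty$ (dominated convergence). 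Hence there is a unique $\mu_0$ with $g(\mu_0) = 1$, and by the definition of the Luxemburg norm as an infimum over $\{\mu : g(\mu) \le 1\}$ together with monotonicity, that infimum equals $\mu_0$ and is attained; this gives $\|u\|_{r(\cdot)} = \mu \iff g(\mu) = 1$.

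Part (b) follows from (a) combined with the strict monotonicity of $g$: if $\|u\|_{r(\cdot)} < 1$ then $g(1) < g(\|u\|_{r(\cdot)}) = 1$, i.e. $\varrho_{r(\cdot)}(u) < 1$, and conversely; the cases $=1$ and $>1$ are analogous. For (c), assume $\|u\|_{r(\cdot)} \le 1$, so with $\mu = \|u\|_{r(\cdot)} \le 1$ we have $\into (|u|/\mu)^{r(x)}\,dx = 1$ by (a). Since $\mu \le 1$, for every $x$ we have $\mu^{r_+} \le \mu^{r(x)} \le \mu^{r_-}$, hence $\mu^{-r_-} |u|^{r(x)} \le (|u|/\mu)^{r(x)} \le \mu^{-r_+}|u|^{r(x)}$; integrating and using that the middle integral is $1$ yields $\mu^{r_+} \le \varrho_{r(\cdot)}(u) \le \mu^{r_-}$, which is exactly (c). Part (d) is proved the same way, now using $\mu = \|u\|_{r(\cdot)} \ge 1$ so that the inequalities between $\mu^{r_\pm}$ and $\mu^{r(x)}$ reverse.

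Finally, (e) and (f) are consequences of (b)--(d). For (e): if $\|u_n\|_{r(\cdot)} \to 0$, then eventually $\|u_n\|_{r(\cdot)} \le 1$, so by (c) $\varrho_{r(\cdot)}(u_n) \le \|u_n\|_{r(\cdot)}^{r_-} \to 0$; conversely if $\varrho_{r(\cdot)}(u_n) \to 0$ then eventually $\varrho_{r(\cdot)}(u_n) < 1$, so by (b) $\|u_n\|_{r(\cdot)} < 1$, and then (c) gives $\|u_n\|_{r(\cdot)}^{r_+} \le \varrho_{r(\cdot)}(u_n) \to 0$. The "$\to\infty$" statement is handled symmetrically via (d). Part (f) is just (e) applied to the sequence $u_n - u$. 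The only mildly delicate point — the "main obstacle," though it is routine — is justifying the continuity/limit behavior of $g(\mu)$ carefully enough to know the infimum in the Luxemburg norm is attained; everything else is elementary manipulation of the modular inequalities. I would present (a) in full and then treat (b)--(f) briskly.
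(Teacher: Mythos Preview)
Your argument is correct and is the standard proof of these modular--norm relations in variable exponent Lebesgue spaces. Note, however, that the paper does not supply its own proof of this proposition: it is stated as a known preliminary fact (with a reference to the monograph of Diening--Harjulehto--H\"{a}st\"{o}--R\r{u}\v{z}i\v{c}ka), so there is nothing to compare against beyond observing that your approach is exactly the textbook one.
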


We know that $(\Lp{r(\cdot)},\|\cdot\|_{r(\cdot)})$ is a separable and reflexive Banach space. Further we denote by $r'(x)=\frac{r(x)}{r(x)-1}$ the conjugate variable exponent to $r\in E_1$, that is,
\begin{align*}
	\frac{1}{r(x)}+\frac{1}{r'(x)}=1 \quad\text{for all }x\in\close.
\end{align*}
It is clear that $r'\in E_1$. We know that $\Lp{r(\cdot)}^*=\Lp{r'(\cdot)}$ and the following version of H\"older's inequality holds
\begin{align*}
	\into |uv| \diff x \leq \l[\frac{1}{r_-}+\frac{1}{r'_-}\r] \|u\|_{r(\cdot)}\|v\|_{r'(\cdot)}
\end{align*}
for all $u\in \Lp{r(\cdot)}$ and for all $v \in \Lp{r'(\cdot)}$.

Moreover, if $r_1, r_2\in E_1$ and $r_1(x) \leq r_2(x)$ for all $x\in \close$, then we have the continuous embedding
\begin{align*}
	\Lp{r_2(\cdot)} \hookrightarrow \Lp{r_1(\cdot)}.
\end{align*}

The corresponding variable exponent Sobolev spaces can be defined in a natural way using the variable exponent Lebesgue spaces. So, given $r \in E_1$, we define 
\begin{align*}
	\Wp{r(\cdot)}=\l\{ u \in \Lp{r(\cdot)} \,:\, |\nabla u| \in \Lp{r(\cdot)}\r\}
\end{align*}
with $\nabla u$ being the gradient of $u\colon\Omega\to\R$. This space is equipped with the norm
\begin{align*}
	\|u\|_{1,r(\cdot)}=\|u\|_{r(\cdot)}+\|\nabla u\|_{r(\cdot)} \quad\text{for all } u \in \Wp{r(\cdot)}
\end{align*}
with $\|\nabla u\|_{r(\cdot)}=\|\,|\nabla u|\,\|_{r(\cdot)}$.

Let $r \in E_1$ be Lipschitz continuous, that is, $r_1 \in E_1 \cap C^{0,1}(\close)$. We define 
\begin{align*}
	\Wpzero{r(\cdot)}= \overline{C^\infty_c(\Omega)}^{\|\cdot\|_{1,r(\cdot)}}.
\end{align*}
The spaces $\Wp{r(\cdot)}$ and $\Wpzero{r(\cdot)}$ are both separable and reflexive Banach spaces. On the space $\Wpzero{r(\cdot)}$ we have the Poincar\'e inequality, namely there exists $\hat{c}>0$ such that 
\begin{align*}
	\|u\|_{r(\cdot)} \leq \hat{c} \|\nabla u\|_{r(\cdot)} \quad\text{for all } u \in \Wpzero{r(\cdot)}.
\end{align*}

Let $r \in E_1\cap C^{0,1}(\close)$ and set
\begin{align*}
	r^*(x)=
	\begin{cases}
		\frac{Nr(x)}{N-r(x)} & \text{if }r(x)<N,\\
		+\infty & \text{if } N \leq r(x),
	\end{cases} \quad\text{for all }x\in\close,
\end{align*}
which is the critical variable Sobolev exponent corresponding to $r$. Let $q\in C(\close)$ be such that $1 \leq q_- \leq q(x)\leq r^*(x)$ (resp. $1\leq q_-\leq q(x)<r^*(x)$) for all $x \in \close$. If $X=\Wp{r(\cdot)}$ or $X=\Wpzero{r(\cdot)}$, then we have 
\begin{align*}
	X \hookrightarrow \Lp{q(\cdot)} \quad \text{continuously} \quad (\text{resp.\,compactly}).
\end{align*}
This is the anisotropic Sobolev embedding theorem.

For $r\in E_1\cap C^{0,1}(\close)$, we have
\begin{align*}
	\Wpzero{r(\cdot)}^*=W^{-1,r'(\cdot)}(\Omega).
\end{align*}

Let $A_{r(\cdot)}\colon \Wpzero{r(\cdot)}\to W^{-1,r'(\cdot)}(\Omega)$ be the nonlinear operator defined by
\begin{align*}
	\l\lan A_{r(\cdot)}(u),h\r\ran= \into |\nabla u|^{r(x)-2} \nabla u \cdot \nabla h\diff x\quad\text{for all }u,h\in\Wpzero{r(\cdot)}.
\end{align*} 
This map has the following properties, see, for example Gasi\'nski-Papageorgiou \cite[Proposition 2.5]{7-Papageorgiou-Gasinski-2011} and R\u{a}dulescu-Repov\v{s} \cite[p.\,40]{19-Radulescu-Repovs-2015}.

\begin{proposition}\label{proposition_2}
	The operator $A_{r(\cdot)}\colon \Wpzero{r(\cdot)}\to W^{-1,r'(\cdot)}(\Omega)$ is bounded (so it maps bounded sets to bounded sets), continuous, strictly monotone (which implies it is also maximal monotone) and of type $(\Ss)_+$, that is,
	\begin{align*}
		u_n\weak u \text{ in }\Wpzero{r(\cdot)} \text{ and } \limsup_{n\to\infty} \l\lan A_{r(\cdot)}(u_n),u_n-u\r\ran \leq 0
	\end{align*}
	imply $u_n\to u$ in $\Wpzero{r(\cdot)}$.
\end{proposition}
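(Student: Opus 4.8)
The plan is to prove each of the four asserted properties of $A_{r(\cdot)}$ in turn, following the standard scheme for nonlinear operators of this type. Boundedness is immediate: by Hölder's inequality for variable exponents, $|\langle A_{r(\cdot)}(u),h\rangle| \leq C \|\,|\nabla u|^{r(\cdot)-1}\|_{r'(\cdot)} \|\nabla h\|_{r(\cdot)}$, and $\|\,|\nabla u|^{r(\cdot)-1}\|_{r'(\cdot)}$ is controlled by a power of $\varrho_{r(\cdot)}(\nabla u)$ via Proposition~\ref{proposition_1}(c)--(d), hence by $\|\nabla u\|_{r(\cdot)}$; this gives a bound on $\|A_{r(\cdot)}(u)\|_*$ in terms of $\|u\|_{1,r(\cdot)}$ on bounded sets. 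For continuity I would argue sequentially: if $u_n \to u$ in $\Wpzero{r(\cdot)}$, then $\nabla u_n \to \nabla u$ in $\LP$-type norm, so along a subsequence $\nabla u_n \to \nabla u$ pointwise a.e.\ with an $L^{r(\cdot)}$-dominating function; then $|\nabla u_n|^{r(x)-2}\nabla u_n \to |\nabla u|^{r(x)-2}\nabla u$ pointwise, and a variable-exponent dominated convergence argument (or the equivalence of modular and norm convergence in Proposition~\ref{proposition_1}(f) applied in $L^{r'(\cdot)}$) shows convergence of the Nemytskii-type map; the usual subsequence principle upgrades this to convergence of the full sequence.

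**Strict monotonicity** follows from the elementary vector inequality: for $\xi,\zeta \in \R^N$ and $t>1$, $(|\xi|^{t-2}\xi - |\zeta|^{t-2}\zeta)\cdot(\xi-\zeta) > 0$ whenever $\xi\neq\zeta$. Applying this pointwise with $t=r(x)$ to $\xi = \nabla u(x)$, $\zeta = \nabla v(x)$ and integrating gives $\langle A_{r(\cdot)}(u)-A_{r(\cdot)}(v), u-v\rangle > 0$ for $u \neq v$ in $\Wpzero{r(\cdot)}$; since a continuous everywhere-defined monotone operator on a reflexive Banach space is maximal monotone, maximality is automatic.

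**The main obstacle** is the $(\Ss)_+$ property, which is the only part requiring real care in the variable-exponent setting. Suppose $u_n \weak u$ in $\Wpzero{r(\cdot)}$ with $\limsup_n \langle A_{r(\cdot)}(u_n), u_n - u\rangle \leq 0$. By the compact embedding $\Wpzero{r(\cdot)} \hookrightarrow \LP{r(\cdot)}$ we have $u_n \to u$ in $\LP{r(\cdot)}$; writing $\langle A_{r(\cdot)}(u_n), u_n-u\rangle = \langle A_{r(\cdot)}(u_n) - A_{r(\cdot)}(u), u_n-u\rangle + \langle A_{r(\cdot)}(u), u_n-u\rangle$ and noting the last term vanishes by weak convergence, monotonicity of $A_{r(\cdot)}$ forces $\lim_n \langle A_{r(\cdot)}(u_n)-A_{r(\cdot)}(u), u_n-u\rangle = 0$. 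The integrand $(|\nabla u_n|^{r(x)-2}\nabla u_n - |\nabla u|^{r(x)-2}\nabla u)\cdot(\nabla u_n - \nabla u)$ is nonnegative and its integral tends to $0$, so along a subsequence it converges to $0$ a.e.; the strict pointwise monotonicity inequality then yields $\nabla u_n \to \nabla u$ a.e.\ in $\Omega$. The delicate point is passing from this pointwise convergence to norm (equivalently modular) convergence $\varrho_{r(\cdot)}(\nabla u_n - \nabla u) \to 0$: here one invokes a Vitali-type / uniform-integrability argument, using that $\int_\Omega |\nabla u_n|^{r(x)}\diff x$ stays bounded together with the vanishing of the cross-term integral to control $\int_\Omega |\nabla u_n|^{r(x)-2}\nabla u_n\cdot\nabla u\diff x$, and thereby to show the sequence $\{|\nabla u_n - \nabla u|^{r(\cdot)}\}$ is uniformly integrable; Proposition~\ref{proposition_1}(f) then converts modular convergence to $\|\nabla u_n - \nabla u\|_{r(\cdot)} \to 0$, and with the $\LP{r(\cdot)}$-convergence of $u_n$ this gives $u_n \to u$ in $\Wpzero{r(\cdot)}$. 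Finally the subsequence principle promotes this to convergence of the whole sequence. Since all of this is by now classical, I would most likely simply cite Gasi\'nski--Papageorgiou \cite[Proposition 2.5]{7-Papageorgiou-Gasinski-2011} and R\u{a}dulescu--Repov\v{s} \cite[p.\,40]{19-Radulescu-Repovs-2015} rather than reproduce the argument.
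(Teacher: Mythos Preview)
Your proposal is correct, and in fact the paper does not prove this proposition at all: it simply states the result and cites Gasi\'nski--Papageorgiou \cite[Proposition 2.5]{7-Papageorgiou-Gasinski-2011} and R\u{a}dulescu--Repov\v{s} \cite[p.\,40]{19-Radulescu-Repovs-2015}, exactly as you suggest in your final sentence. Your sketch of the underlying argument is accurate and goes beyond what the paper provides.
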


The anisotropic singular regularity theory, see Saoudi-Ghanmi \cite[Appendix 2]{20-Saoudi-Ghanmi-2017}, leads to another Banach space, namely the space
\begin{align*}
	C^1_0(\close)=\l\{u\in C^1(\close)\, :\, u\big|_{\partial\Omega}=0  \r\}.
\end{align*}
This is an ordered Banach space with positive (order) cone
\begin{align*}
	C^1_0(\overline{\Omega})_+=\left\{u \in C^1_0(\overline{\Omega}): u(x) \geq 0 \text{ for all } x \in \overline{\Omega}\right\}.
\end{align*}
This cone has a nonempty interior given by
\begin{align*}
\ints \left(C^1_0(\overline{\Omega})_+\right)=\left\{u \in C^1_0(\overline{\Omega})_+: u(x)>0 \text{ for all } x \in \Omega \text{, } \frac{\partial u}{\partial n}\bigg|_{\partial\Omega}<0  \right\},
\end{align*}
where $\frac{\partial u}{\partial n}=\nabla u \cdot n$ with $n$ being the outward unit normal on $\partial \Omega$.

Our hypotheses on the exponents $p(\cdot)$, $q(\cdot)$ and $\eta(\cdot)$ are the following ones:
\begin{enumerate}
	\item[H$_0$:]
	$p,q \in E_1\cap C^{0,1}(\close)$, $\eta\in C(\close)$, $1<q_-\leq q_+<p_-\leq p_+$ and $0<\eta(x)<1$ for all $x\in \close$.
\end{enumerate}

Using these conditions on the exponents and following the arguments in the papers of Papageorgiou-R\u{a}dulescu-Repov\v{s} \cite[Proposition 2.4]{14-Papageorgiou-Radulescu-Repovs-2020}, \cite[Proposition 6]{15-Papageorgiou-Radulescu-Repovs-2020} we can have two strong comparison principles.

For the first, we will need the following ordering notion on $M(\Omega)$.

So, given $y_1,y_2\colon \Omega \to\R$ two measurable functions, we write $y_1 \preceq y_2$ if for every compact set $K\subseteq \Omega$, we have $0<c_K\leq y_2(x)- y_1(x)$ for a.\,a.\,$x\in K$. Note that if $y_1,y_2\in C(\Omega)$ and $y_1(x)< y_2(x)$ for all $x\in \Omega$, then $y_1 \preceq y_2$. The first strong comparison principle is the following one.
\begin{proposition}\label{proposition_3}
	If hypotheses H$_0$ hold, $\hat{\xi} \in \Linf$, $\hat{\xi}(x) \geq 0$ for a.\,a.\,$x\in \Omega$, $y_1, y_2\in \Linf$, $y_1\preceq y_2$, $u\in \Wp{p(\cdot)}$, $u\geq 0$ for a.\,a.\,$x\in \Omega$, $u\neq 0$, $v\in\interior$ and
	\begin{align*}
		-\Delta_{p(\cdot)}u-\Delta_{q(\cdot)}u+\hat{\xi} (x) u^{p(x)-1}-u^{-\eta(x)}=y_1(x) & \text{ in }\Omega,\\
		-\Delta_{p(\cdot)}v-\Delta_{q(\cdot)}v+\hat{\xi} (x) v^{p(x)-1}-v^{-\eta(x)}=y_2(x) & \text{ in }\Omega,
	\end{align*}
	then $v-u \in\interior$.
\end{proposition}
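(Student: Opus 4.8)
The plan is to argue in three steps: first promote the regularity of $u$, then establish the pointwise bound $u\le v$, and finally turn the difference of the two equations into a strong maximum principle and a Hopf-type estimate for $w:=v-u$, along the lines of Papageorgiou-R\u{a}dulescu-Repov\v{s} \cite[Proposition~2.4]{14-Papageorgiou-Radulescu-Repovs-2020} and \cite[Proposition~6]{15-Papageorgiou-Radulescu-Repovs-2020}. For the first step, writing the equation for $u$ as $-\Delta_{p(\cdot)}u-\Delta_{q(\cdot)}u=y_1(x)+u^{-\eta(x)}-\hat{\xi}(x)u^{p(x)-1}$ and noting that, away from $\{u=0\}$, its right-hand side is an $\Linf$-perturbation of the nonnegative singular term, the anisotropic singular regularity theory of Saoudi-Ghanmi \cite[Appendix~2]{20-Saoudi-Ghanmi-2017} gives $u\in C^1_0(\close)_+\setminus\{0\}$. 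Since $u^{-\eta(x)}\to+\infty$ wherever $u(x)\to 0$, there is $\delta_0>0$ such that $-\Delta_{p(\cdot)}u-\Delta_{q(\cdot)}u\ge 0$ holds weakly on a neighbourhood of any interior zero of $u$; the anisotropic strong maximum principle (with a connectedness argument) rules interior zeros out, and its boundary counterpart (Hopf) gives $u\in\interior$. In particular $u,v\in C^1(\close)$ and are both bounded below by a positive constant on every compact subset of $\Omega$ and comparable to $\dist(\cdot,\partial\Omega)$ near $\partial\Omega$.

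For the weak comparison, $u,v\in C^1_0(\close)\subseteq\Wpzero{p(\cdot)}$ makes $(u-v)^+\in\Wpzero{p(\cdot)}$ an admissible test function. Subtracting the two equations and testing with $(u-v)^+$, the operator terms $\l\lan A_{p(\cdot)}(u)-A_{p(\cdot)}(v),(u-v)^+\r\ran$ and $\l\lan A_{q(\cdot)}(u)-A_{q(\cdot)}(v),(u-v)^+\r\ran$ are nonnegative by the monotonicity in Proposition~\ref{proposition_2}, the zero-order term is nonnegative because $s\mapsto\hat{\xi}(x)s^{p(x)-1}-s^{-\eta(x)}$ is nondecreasing on $(0,\infty)$ (its derivative is $\hat{\xi}(x)(p(x)-1)s^{p(x)-2}+\eta(x)s^{-\eta(x)-1}\ge 0$) and $(u-v)^+$ is supported on $\{u\ge v\}$, while $\into(y_1-y_2)(u-v)^+\diff x\le 0$ since $y_1\preceq y_2$ forces $y_1\le y_2$ a.e. Hence every term vanishes, and strict monotonicity of the vector fields $\xi\mapsto|\xi|^{p(x)-2}\xi$ and $\xi\mapsto|\xi|^{q(x)-2}\xi$ forces $\nabla(u-v)^+=0$ a.e., so $(u-v)^+$, being a constant in $\Wpzero{p(\cdot)}$, vanishes. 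Thus $0\le u\le v$ in $\Omega$ and $w=v-u\in C^1_0(\close)_+$.

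For the strong comparison, fix a compact $K\subseteq\Omega$. By the previous steps $m_K\le u\le v\le M$ on $K$ for some $0<m_K\le M$, so the mean value theorem gives $0\le\hat{\xi}(x)\l(v^{p(x)-1}-u^{p(x)-1}\r)\le c_1(K)w$ and $v^{-\eta(x)}-u^{-\eta(x)}\ge -c_2(K)w$ on $K$; combining this with $y_2-y_1\ge c_K>0$ a.e.\ on $K$ (which follows from $y_1\preceq y_2$) and subtracting the equations gives, with $a_r(\xi)=|\xi|^{r-2}\xi$,
\begin{align*}
&-\divergenz\l(a_{p(x)}(\nabla v)-a_{p(x)}(\nabla u)\r)-\divergenz\l(a_{q(x)}(\nabla v)-a_{q(x)}(\nabla u)\r)\\
&\qquad\ge c_K-\l(c_1(K)+c_2(K)\r)w\qquad\text{weakly on }\ints K.
\end{align*}
The vector field on the left is continuous (since $\nabla u,\nabla v\in C(\close)$), has nonnegative scalar product with $\nabla w$ that vanishes exactly where $\nabla w=0$, and on $\{\nabla u\ne 0\}$ linearizes to leading order into a uniformly elliptic expression in $\nabla w$; this is the framework of the strong maximum principle for nonhomogeneous $(p(\cdot),q(\cdot))$-operators, which---treating the regular region $\{\nabla u\ne 0\}$ by the classical strong maximum principle and Hopf lemma and the critical region $\{\nabla u=0\}$ by the structure of the $p$-Laplacian---yields $w>0$ on $\ints K$, exactly as in \cite[Proposition~2.4]{14-Papageorgiou-Radulescu-Repovs-2020} and \cite[Proposition~6]{15-Papageorgiou-Radulescu-Repovs-2020}. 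Since $K$ was arbitrary, $w>0$ in $\Omega$; the boundary version of this local argument (an interior ball at $x_0\in\partial\Omega$ together with Hopf's lemma, using that $v\in\interior$) gives $\frac{\partial w}{\partial n}<0$ on $\partial\Omega$, whence $v-u=w\in\interior$.

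The whole difficulty lies in this last step, and inside it in two places: the degenerate critical set $\{\nabla u=0\}$, and the boundary estimate---near $\partial\Omega$ the terms $u^{-\eta(\cdot)}$ and $v^{-\eta(\cdot)}$ both blow up like $\dist(\cdot,\partial\Omega)^{-\eta(\cdot)}$ and the constant $c_K$ coming from $y_1\preceq y_2$ may degenerate as $K\to\partial\Omega$, so the crude inequality above must be replaced by a more delicate local comparison. This is precisely the technical content imported from \cite{14-Papageorgiou-Radulescu-Repovs-2020,15-Papageorgiou-Radulescu-Repovs-2020}; the regularity step and the weak comparison are routine.
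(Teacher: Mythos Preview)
The paper does not give a self-contained proof of this proposition; immediately before the statement it simply says that the result is obtained ``following the arguments in the papers of Papageorgiou--R\u{a}dulescu--Repov\v{s} \cite[Proposition~2.4]{14-Papageorgiou-Radulescu-Repovs-2020}, \cite[Proposition~6]{15-Papageorgiou-Radulescu-Repovs-2020}''. Your sketch is exactly along those lines---regularity of $u$, weak comparison $u\le v$ by monotonicity, then a localized strong maximum principle/Hopf argument for $w=v-u$---so you are matching the paper's intended approach and in fact supplying considerably more detail than the paper itself. Your honest flagging of the two genuine technical obstacles (the degenerate set $\{\nabla u=0\}$ and the loss of the lower bound $c_K$ as $K\to\partial\Omega$, where the singular terms blow up) is accurate: those are precisely the points whose resolution is imported from \cite{14-Papageorgiou-Radulescu-Repovs-2020,15-Papageorgiou-Radulescu-Repovs-2020}, and the paper makes no attempt to reproduce them.

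One minor remark on your first step: the hypothesis places $u$ only in $\Wp{p(\cdot)}$, with no boundary condition stated, yet the conclusion $v-u\in\interior$ forces $u\big|_{\partial\Omega}=0$. In every application the paper makes of this proposition (e.g.\ Proposition~\ref{proposition_10}) the function playing the role of $u$ is already in $\interior$, so this is harmless, but your sentence ``the anisotropic singular regularity theory \ldots\ gives $u\in C^1_0(\close)_+\setminus\{0\}$'' silently uses a zero trace that is not explicitly assumed; it would be cleaner to note that the equation for $u$, together with $u\ge 0$, $u\not\equiv 0$ and the regularity/maximum principle machinery, first upgrades $u$ to $\interior$ (which in particular recovers the boundary condition).
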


In the second strong comparison principle, we strengthen the order condition on $y_1$ and $y_2$ but drop the boundary requirements on $u$ and $v$.
\begin{proposition}\label{proposition_4}
	If hypotheses H$_0$ hold, $\hat{\xi} \in \Linf$, $\hat{\xi} \geq 0$ for a.\,a.\,$x\in \Omega$, $y_1, y_2\in \Linf$, $0<c_0\leq y_2(x)-y_1(x)$ for a.\,a.\,$x\in\Omega$, $u,v\in C^{1,\alpha}(\close)$, $0<u(x) \leq v(x)$ for all $x\in \Omega$ and
	\begin{align*}
		-\Delta_{p(\cdot)}u-\Delta_{q(\cdot)}u+\hat{\xi} (x) 	u^{p(x)-1}-u^{-\eta(x)}=y_1(x) & \text{ in }\Omega,\\
		-\Delta_{p(\cdot)}v-\Delta_{q(\cdot)}v+\hat{\xi} (x) v^{p(x)-1}-v^{-\eta(x)}=y_2(x) & \text{ in }\Omega,
	\end{align*}
	then $u(x)<v(x)$ for all $x\in\Omega$.
\end{proposition}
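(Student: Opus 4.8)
The plan is to argue by contradiction by means of an interior touching-ball construction, reducing the differential inequality for $w:=v-u$ to a \emph{uniformly} elliptic linear one and then invoking Hopf's boundary point lemma. By hypothesis $w\ge0$ on $\Om$ and $w\in C^{1,\alpha}(\close)$. Suppose the relatively closed set $Z:=\{x\in\Om:u(x)=v(x)\}$ is nonempty. If $Z=\Om$, then $u\equiv v$ on $\Om$ and substituting this into the two equations forces $y_1=y_2$ a.\,e., contradicting $y_2-y_1\ge c_0>0$; hence $\emptyset\neq Z\subsetneq\Om$. Since $\Om$ is connected, I can pick $z_0\in\Om\setminus Z$ so close to $Z$ that $\dist(z_0,Z)<\dist(z_0,\rand)$; enlarging a ball centered at $z_0$ until it meets $Z$ and then sliding its center toward a contact point yields, for every small $\rho>0$, a ball $B=B(z_1,\rho)$ with $\overline{B}\subseteq\Om$, $w>0$ on $B$, and $w(x_1)=0$ for some $x_1\in\partial B$. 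As $x_1\in\Om$ is an interior minimizer of $w$ over $\Om$, we have $\nabla w(x_1)=0$; this is the relation I will contradict.

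Next I would extract the differential inequality. On the fixed compact set $\overline{B}\subseteq\Om$ one has $0<m\le u\le v\le M<\infty$, so $s\mapsto s^{-\eta(x)}$ and $s\mapsto s^{p(x)-1}$ are Lipschitz on $[m,M]$ uniformly in $x\in\overline{B}$. Writing $\mathbf a_r(x,\xi):=|\xi|^{r(x)-2}\xi$ and subtracting the two equations tested against $0\le\ph\in C^\infty_c(B)$ gives
\begin{align*}
	\int_B\big[(\mathbf a_p(x,\nabla v)-\mathbf a_p(x,\nabla u))+(\mathbf a_q(x,\nabla v)-\mathbf a_q(x,\nabla u))\big]\cdot\nabla\ph\diff x=\int_B g\,\ph\diff x,
\end{align*}
with $g:=(y_2-y_1)-\hat\xi\,(v^{p(x)-1}-u^{p(x)-1})+(v^{-\eta(x)}-u^{-\eta(x)})$. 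Since $\hat\xi\ge0$ and $0<u\le v$, the mean value theorem and the above Lipschitz bounds give $0\le\hat\xi\,(v^{p(x)-1}-u^{p(x)-1})\le C_1w$ and $v^{-\eta(x)}-u^{-\eta(x)}\ge-C_2w$ on $B$, so $g\ge c_0-(C_1+C_2)w$. Because $w$ is continuous with $w(x_1)=0$, shrinking $\rho$ makes $\max_{\overline{B}}w\le c_0/\bigl(2(C_1+C_2)\bigr)$, hence $g\ge c_0/2>0$ on $B$ and the left-hand side above is $\ge\tfrac{c_0}{2}\int_B\ph\diff x\ge0$ for all such $\ph$.

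Then I would linearize. For $r\in\{p,q\}$ and $x\in\overline{B}$ one has
\begin{align*}
	\mathbf a_r(x,\nabla v(x))-\mathbf a_r(x,\nabla u(x))=M_r(x)\nabla w(x),\qquad M_r(x):=\int_0^1 D_\xi\mathbf a_r\bigl(x,\nabla u(x)+t\nabla w(x)\bigr)\diff t,
\end{align*}
where $M_r(x)$ is symmetric and positive semidefinite, and positive definite whenever $\nabla u(x)\neq0$. If $\nabla u(x_1)\neq0$, then (shrinking $\rho$ so that $\nabla u\neq0$ on $\overline{B}$) the field $A:=M_p+M_q$ is continuous and everywhere positive definite on the compact set $\overline{B}$, hence uniformly elliptic; the inequality of the preceding paragraph then says exactly that $w$ is a nonnegative weak supersolution of $-\divergenz(A(x)\nabla\zeta)=0$ on $B$ with $w>0$ in $B$ and $w(x_1)=0$ at the boundary point $x_1$. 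By Hopf's boundary point lemma, $\frac{\partial w}{\partial n}(x_1)<0$ for the outer normal $n$ to $B$, contradicting $\nabla w(x_1)=0$.

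The main obstacle is the degenerate case $\nabla u(x_1)=0$, which forces $\nabla v(x_1)=\nabla u(x_1)+\nabla w(x_1)=0$ as well: there $A$ need not be uniformly elliptic near $x_1$ and the linear Hopf lemma is unavailable. I expect to resolve this by invoking instead a strong maximum / boundary point principle for quasilinear \emph{degenerate} operators (in the spirit of Pucci--Serrin) applied directly to the differential inequality satisfied by $w$, following the proofs of \cite[Proposition 2.4]{14-Papageorgiou-Radulescu-Repovs-2020} and \cite[Proposition 6]{15-Papageorgiou-Radulescu-Repovs-2020}; this is the step where the regularity $u,v\in C^{1,\alpha}(\close)$ and the fine structure of the anisotropic $(p,q)$-operator are genuinely used. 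Finally, observe that positivity of $u$ entered only through the lower bound $u\ge m>0$ on the compact set $\overline{B}\subseteq\Om$, so no boundary positivity of $u$ or $v$ is required, which is precisely why the $\interior$-type hypotheses of Proposition~\ref{proposition_3} can be dropped here.
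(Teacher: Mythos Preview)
The paper does not actually prove this proposition: it is stated without proof, with only the remark that it follows ``using these conditions on the exponents and following the arguments in the papers of Papageorgiou--R\u{a}dulescu--Repov\v{s} \cite[Proposition~2.4]{14-Papageorgiou-Radulescu-Repovs-2020}, \cite[Proposition~6]{15-Papageorgiou-Radulescu-Repovs-2020}.'' So there is nothing in the paper to compare your argument against beyond those citations, and in fact your proposal already ends by deferring to exactly the same two references for the hard case.

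That said, your write-up is not a complete proof, and you correctly locate the gap yourself. The touching-ball construction and the linearization $M_r(x)=\int_0^1 D_\xi\mathbf a_r(x,\nabla u+t\nabla w)\,dt$ are fine, and in the nondegenerate situation $\nabla u(x_1)\neq 0$ your reduction to a uniformly elliptic linear inequality and the classical Hopf lemma go through. But the degenerate case $\nabla u(x_1)=\nabla v(x_1)=0$ is not a technicality that can be waved away: when $p(x_1),q(x_1)>2$ the matrix $A=M_p+M_q$ genuinely loses ellipticity at $x_1$, and a bare appeal to ``Pucci--Serrin type'' principles is not a proof. The actual argument in the cited papers does not proceed via your linearized Hopf route at all; it works directly with the nonlinear operator, constructing an explicit radial barrier (comparison) function in an annulus inside $B$ and using the weak comparison principle for the anisotropic $(p,q)$-operator together with the uniform gap $y_2-y_1\ge c_0>0$ to force a strictly negative normal derivative of $w$ at $x_1$. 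This barrier construction is what replaces Hopf and is robust precisely because it does not require nondegeneracy of the gradient. If you want a self-contained argument, that is the step you have to supply; as written, your proposal handles only the easy case and then cites the same sources the paper does for the substantive one.
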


Given $u\in M(\Omega)$, we define $u^{\pm}=\max \{\pm u,0\}$ being the positive and negative part of $u$, respectively. We know that $u=u^+-u^-$, $|u|=u^++u^-$ and if $u\in \Wpzero{p(\cdot)}$, then $u^{\pm}\in\W$.

If $u,v\in M(\Omega)$ and $u(x)\leq v(x)$ for a.\,a.\,$x\in\Omega$, then we define \begin{align*}
	[u,v]&=\left\{h\in\Wpzero{p(\cdot)}: u(x) \leq h(x) \leq v(x) \text{ for a.\,a.\,}x\in\Omega\right\},\\[1ex]
	[u) & = \left\{h\in \Wpzero{p(\cdot)}: u(x) \leq h(x) \text{ for a.\,a.\,}x\in\Omega\right\}.
\end{align*}
Moreover, we denote by $\sideset{}{_{C^1_0(\close)}} \ints [u,v]$  the interior of $[u,v]\cap C^1_0(\close)$ in $C^1_0(\close)$.

In what follows, for notational simplicity, we denote by $\|\cdot\|$ the norm of the anisotropic Sobolev space $\W$. On account of Poincar\'e's inequality we have
\begin{align*}
	\|u\|=\|\nabla u\|_{r(\cdot)} \quad\text{for all } u \in \Wpzero{r(\cdot)}.
\end{align*}

Given a Banach space $X$ and a functional $\ph \in C^1(X)$, we define
\begin{align*}
	K_\ph=\left\{ u\in X: \ph'(u)=0\right\}
\end{align*}
being the critical set of $\ph$. We say that $\ph$ satisfies the ``Cerami condition'', C-condition for short, if every sequence $\{u_n\}_{n\in\N}\subseteq X$ such that $\{\ph(u_n)\}_{n\in\N}\subseteq \R$ is bounded and 
\begin{align*}
	\left(1+\|u_n\|_X\right)\ph'(u_n) \to 0\quad\text{in }X^* \text{ as }n\to \infty,
\end{align*}    
admits a strongly convergent subsequence. This is a compactness-type condition on the functional $\ph$ which compensates for the fact that the ambient space $X$ is not locally compact in general, since it could be infinite dimensional. Using this condition, we can prove a deformation theorem which leads to the minimax theorems of the critical point theory, see, for example, Papageorgiou-R\u{a}dulescu-Repov\v{s} \cite[Section 5.4]{13-Papageorgiou-Radulescu-Repovs-2019}.

Now we are ready to state our hypotheses on the nonlinearity $f\colon\Omega\times\R\to\R$.
\begin{enumerate}
	\item[H$_1$:]
		$f\colon\Omega\times\R\to\R$ is a Carath\'eodory function such that $f(x,0)=0$ for a.\,a.\,$x\in\Omega$ and
		\begin{enumerate}
			\item[(i)]
				there exists $a \in \Linf$ such that
				\begin{align*}
					0\leq f(x,s) \leq a(x) \l [ 1+s^{r(x)-1}\r]
				\end{align*}
				for a.\,a.\,$x\in \Omega$, for all $s\geq 0$ with $r \in C(\close)$ such that $p_+<r_-\leq r(x)<p^*(x)$ for all $x \in \close$;
			\item[(ii)]
				if $F(x,s)=\displaystyle\int_0^s f(x,t)\diff t$, then
				\begin{align*}
					\lim_{s\to +\infty} \frac{F(x,s)}{s^{p_+}}=+\infty \quad\text{uniformly for a.\,a.\,}x\in\Omega;
				\end{align*}
			\item[(iii)]
				there exists a function $\mu\in C(\close)$ such that
				\begin{align*}
					\mu(x) \in \l (\l(r_+-p_-\r)\max\l\{\frac{N}{p_-},1\r\}, p^*(x) \r)\quad\text{ for all }x\in\close
				\end{align*}
				and
				\begin{align*}
					0<\hat{\eta}_0 \leq \liminf_{s\to +\infty} \frac{f(x,s)s-p_+F(x,s)}{s^{\mu(x)}}
				\end{align*}
				uniformly for a.\,a.\,$x\in\close$;
			\item[(iv)]
				\begin{align*}
					0 < \hat{\eta}_1 \leq \liminf_{s\to 0^+}\frac{f(x,s)}{s^{q_+-1}}\quad\text{uniformly for a.\,a.\,}x\in\Omega
				\end{align*}
				and for every $\ell>0$ there exists $m_\ell>0$ such that $m_\ell \leq f(x,s)$ for a.\,a.\,$x\in\Omega$ and for all $s \geq \ell$;
			\item[(v)]
				for every $\rho>0$ there exists $\hat{\xi}_\rho>0$ such that the function
				\begin{align*}
					s\to f(x,s)+\hat{\xi}_\rho s^{p(x)-1}
				\end{align*}
				is nondecreasing on $[0,\rho]$ for a.\,a.\,$x\in \Omega$.
		\end{enumerate}
\end{enumerate}

\begin{remark}
	Without any loss of generality we can assume that $f(x,s)= 0$ for a.\,a.\,$x\in\Omega$ and for all $s \leq 0$ since we are interested in positive solutions of \eqref{problem}. Hypotheses H$_1$(ii), (iii) imply that $f(x,\cdot)$ is $(p_+-1)$-superlinear for a.\,a.\,$x\in\Omega$. In most papers in the literature, superlinear problems are treated by using the AR-condition which in the present context has the following form:
	\begin{enumerate}
		\item[(AR)$_+$:]
			There exist $\theta>p_+$ and $M>0$ such that
			\begin{align}
				0&<\theta F(x,s) \leq f(x,s)s\quad\text{for a.\,a.\,}x\in\Omega \text{ and for all }s\geq M,\label{1a}\\
				0&<\essinf_{x\in\Omega}F(x,M)\label{1b}.
			\end{align}
	\end{enumerate}
	This is a unilateral version of the AR-condition since we assumed that $f(x,s)=0$ for a.\,a.\,$x\in\Omega$ and for all $s\leq 0$. Integrating \eqref{1a} and using \eqref{1b} gives
	\begin{align*}
		c_1 s^\theta \leq F(x,s)
	\end{align*}
	for a.\,a.\,$x\in\Omega$, for all $s\geq M$ and for some $c_1>0$. Hence,
	\begin{align*}
		c_1 s^{\theta-1} \leq f(x,s)
	\end{align*}
	for a.\,a.\,$x\in\Omega$ and for all $s\geq M$, see \eqref{1a}. Therefore, the (AR)$_+$-condition dictates that $f(x,\cdot)$ has at least $(\theta-1)$-polynomial growth as $s\to +\infty$. But this way we exclude superlinear nonlinearities with ``slower'' growth near $+\infty$ from our considerations. The following example fulfills H$_1$, but fails to satisfy the (AR)$_+$-condition:
	\begin{align*}
		f(x,s)=
		\begin{cases}
			\l(s^+\r)^{\tau(x)-1} &\text{if } s\leq 1,\\
			s^{p_+-1}\ln(x)+s^{\theta(x)-1}&\text{if } 1<s
		\end{cases}
	\end{align*}
	with $\tau\in C(\close)$, $\tau_+\leq q_+$ and $\theta\in C(\close)$ such that $\theta_+\leq p_+$.
	
	Hypotheses H$_1$(iv) implies that $f(x,\cdot)$ is strictly $(q_+-1)$-sublinear.
\end{remark}

When studying singular problems of isotropic and anisotropic type, the presence of the singular term leads to an energy function which is not $C^1$ and so we cannot apply directly the minimax theorems of the critical point theory on it. We need to find a way to bypass the singularity and deal with $C^1$-functionals. To this end, we examine a purely singular problem in the next section. The unique solution of this problem will be helpful in our effort to bypass the singularity of our original problem \eqref{problem}.

\section{An auxiliary purely singular problem}

In this section we study the following purely singular anisotropic Dirichlet problem
\begin{align}\tag{Au$_\lambda$}\label{problem_aux}
	-\Delta_{p(\cdot)} u - \Delta_{q(\cdot)} u = \lambda u^{-\eta(x)} \quad \text{in }\Omega, \quad u\big|_{\partial\Omega}=0, \quad u>0, \quad \lambda>0.
\end{align}

We have the main result in this section.

\begin{proposition}\label{proposition_5}
	If hypotheses H$_0$ hold, then problem \eqref{problem_aux} has a unique positive solution $\overline{u}_\lambda\in \interior$. Moreover, the mapping $\lambda \to \overline{u}_\lambda$ is nondecreasing, that is, if $0<\lambda'<\lambda$, then we have $\overline{u}_{\lambda'}\leq \overline{u}_\lambda$.
\end{proposition}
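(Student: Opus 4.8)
The plan is to prove existence via an approximation scheme that regularizes the singularity, then establish uniqueness through a convexity/monotonicity argument, and finally derive the monotonicity of $\lambda \mapsto \overline{u}_\lambda$ from a comparison principle. For existence, I would first fix $\varepsilon>0$ and consider the regularized problem $-\Delta_{p(\cdot)}u - \Delta_{q(\cdot)}u = \lambda(u^+ + \varepsilon)^{-\eta(x)}$ in $\Omega$, $u|_{\partial\Omega}=0$. This has a bounded, monotone reaction term, so its associated energy functional $\varphi_\varepsilon$ on $\W$ is $C^1$, coercive (the reaction grows sublinearly since $(s+\varepsilon)^{-\eta(x)}$ is bounded for $s\geq 0$), and sequentially weakly lower semicontinuous; hence it has a global minimizer $u_\varepsilon$, which one checks is nontrivial and positive (testing with $-u_\varepsilon^-$ gives $u_\varepsilon \geq 0$, and the strong maximum principle plus the anisotropic regularity theory of \cite{20-Saoudi-Ghanmi-2017} gives $u_\varepsilon \in \interior$). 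The key a priori bound is a lower estimate $u_\varepsilon \geq \underline{u}$ for some fixed $\underline{u}\in\interior$ independent of $\varepsilon$: one constructs $\underline{u}$ as (a multiple of) the solution of an auxiliary eigenvalue-type problem, or by solving $-\Delta_{p(\cdot)}\underline{u} - \Delta_{q(\cdot)}\underline{u} = c$ for small $c>0$ and shrinking; then $-\Delta_{p(\cdot)}u_\varepsilon - \Delta_{q(\cdot)}u_\varepsilon = \lambda(u_\varepsilon+\varepsilon)^{-\eta(x)} \geq \lambda(\|u_\varepsilon\|_\infty+1)^{-\eta_+}$ — but since we don't yet control $\|u_\varepsilon\|_\infty$, the cleaner route is to use Proposition~\ref{proposition_3} or a direct weak comparison: show $\underline{u}\leq u_\varepsilon$ by testing the difference of the two inequalities with $(\underline{u}-u_\varepsilon)^+$ and using the monotonicity of $A_{p(\cdot)}+A_{q(\cdot)}$ together with $(\underline{u}+\varepsilon)^{-\eta(x)}\geq(u_\varepsilon+\varepsilon)^{-\eta(x)}$ on the set $\{\underline{u}>u_\varepsilon\}$.

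Next I would pass to the limit $\varepsilon\downarrow 0$. Using $u_\varepsilon$ as a test function and the uniform lower bound, one gets $\|u_\varepsilon\|\leq C$ uniformly (the reaction term $\int_\Omega (u_\varepsilon+\varepsilon)^{-\eta(x)}u_\varepsilon\,dx \leq \int_\Omega u_\varepsilon^{1-\eta(x)}\,dx$ is controlled via the embedding $\W\hookrightarrow L^{1}(\Omega)$ since $1-\eta(x)<1$), so along a subsequence $u_\varepsilon \weak \overline{u}_\lambda$ in $\W$ and $u_\varepsilon\to\overline{u}_\lambda$ in $\LP$ and pointwise a.e. The monotonicity in $\varepsilon$ (smaller $\varepsilon$ gives larger solution, again by comparison) makes this limit clean and shows $\overline{u}_\lambda\geq\underline{u}\in\interior$, so $\overline{u}_\lambda^{-\eta(x)}$ is locally bounded and one can justify $\int_\Omega(u_\varepsilon+\varepsilon)^{-\eta(x)}h\,dx\to\int_\Omega \overline{u}_\lambda^{-\eta(x)}h\,dx$ for test functions $h\in C_c^\infty(\Omega)$ by dominated convergence (using the lower bound $u_\varepsilon\geq\underline{u}$ on $\supp h$). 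Combined with the $(\Ss)_+$-property of $A_{p(\cdot)}+A_{q(\cdot)}$ from Proposition~\ref{proposition_2} — applied after testing with $u_\varepsilon - \overline{u}_\lambda$ — we get $u_\varepsilon\to\overline{u}_\lambda$ strongly, hence $\overline{u}_\lambda$ is a weak solution of \eqref{problem_aux}. Finally the anisotropic regularity results give $\overline{u}_\lambda\in C^1_0(\close)$ and, via the lower bound and the Hopf-type boundary point lemma underlying $\interior$, that $\overline{u}_\lambda\in\interior$.

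For uniqueness, suppose $\overline{u},\overline{v}$ are two positive solutions in $\interior$. The standard device is to test the equation for $\overline{u}$ with $\frac{\overline{u}^{r}-\overline{v}^{r}}{\overline{u}^{r-1}}$ and the equation for $\overline{v}$ with $\frac{\overline{v}^{r}-\overline{u}^{r}}{\overline{v}^{r-1}}$ for suitable $r>p_+$ (the discrete Picone / hidden-convexity inequality for the anisotropic $p$- and $q$-Laplacians), add them, and use that on the left one gets a nonnegative quantity while on the right, because $s\mapsto \lambda s^{-\eta(x)}$ is strictly decreasing, one gets $\int_\Omega \lambda(\overline{u}^{-\eta(x)}-\overline{v}^{-\eta(x)})(\overline{u}^{r}-\overline{v}^{r})\overline{u}^{1-r}\,dx + (\text{symmetric}) \leq 0$ with integrand $\leq 0$, forcing $\overline{u}=\overline{v}$; the membership $\overline{u},\overline{v}\in\interior$ is exactly what makes the test functions admissible (no blow-up near $\partial\Omega$ since $\overline{v}/\overline{u}$ and $\overline{u}/\overline{v}$ are bounded). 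Alternatively, and perhaps more robustly in the variable-exponent setting, one can invoke Proposition~\ref{proposition_4}: if $\overline{u}\not\equiv\overline{v}$ one derives a strict inequality contradiction, but since the right-hand sides here are equal rather than strictly ordered, the Picone approach is the cleaner one. The monotonicity statement then follows by a weak comparison argument: for $0<\lambda'<\lambda$, testing the difference of the two equations with $(\overline{u}_{\lambda'}-\overline{u}_\lambda)^+$ and using $\lambda' \overline{u}_{\lambda'}^{-\eta(x)} \leq \lambda \overline{u}_{\lambda'}^{-\eta(x)}$ together with the strict monotonicity of $s\mapsto -s^{-\eta(x)}$ and the monotonicity of $A_{p(\cdot)}+A_{q(\cdot)}$ shows the positive part vanishes, i.e. $\overline{u}_{\lambda'}\leq\overline{u}_\lambda$.

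I expect the main obstacle to be the uniform-in-$\varepsilon$ lower bound $u_\varepsilon\geq\underline{u}\in\interior$ and the attendant justification of the limit passage in the singular term: without a fixed positive subsolution staying away from the boundary, the term $(u_\varepsilon+\varepsilon)^{-\eta(x)}$ cannot be dominated, and the limit $\overline{u}_\lambda$ might fail to lie in $\interior$ (or even to be a genuine, rather than merely distributional, solution up to the boundary). Constructing $\underline{u}$ carefully — so that it is a subsolution of every regularized problem simultaneously and lies in $\interior$ — and combining it with the comparison principle is the technical heart of the argument; everything else (coercivity, minimization, $(\Ss)_+$-passage, Picone uniqueness, comparison monotonicity) is by now routine in this circle of ideas.
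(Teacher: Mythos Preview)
Your overall architecture---regularize, extract a uniform lower bound, pass to the limit, prove uniqueness, deduce $\lambda$-monotonicity by comparison---matches the paper, but the implementations diverge at several points.

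For the regularized problem the paper does not minimize an energy. It fixes $g\in\Lp{p(\cdot)}$, solves $-\Delta_{p(\cdot)}u-\Delta_{q(\cdot)}u=[|g|+\eps]^{-\eta(x)}$ uniquely by surjectivity of the maximal monotone coercive operator $V=A_{p(\cdot)}+A_{q(\cdot)}$, shows the resulting solution map $K_\eps\colon\Lp{p(\cdot)}\to\Lp{p(\cdot)}$ is continuous with relatively compact range, and applies the Schauder--Tychonov theorem to obtain a fixed point $\overline{u}_\eps\in\interior$. Your direct minimization would also work and is arguably shorter.

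The uniform lower bound is where the paper is cleaner than the route you emphasize. Rather than building an external subsolution $\underline{u}$ (whose construction you leave vague and correctly flag as the main obstacle), the paper proves directly that $\eps\mapsto\overline{u}_\eps$ is nonincreasing (its Claim~2, via a truncation--minimization argument), so $\overline{u}_{\eps_n}\geq\overline{u}_{\eps=1}\in\interior$ for every $\eps_n\downarrow0$. You do mention this monotonicity, but only in passing; in the paper it is the central device and it dissolves your obstacle entirely. The a~priori $\W$-bound then comes from testing with $\overline{u}_n$ and controlling $\int_\Omega\overline{u}_n\,\overline{u}_1^{-\eta(x)}\,dx$ via the anisotropic Hardy inequality; the paper upgrades this to $C^{1,\alpha}_0$-compactness, passes to the limit in $C^1_0(\close)$, and handles the singular integral for \emph{all} $h\in\W$ by Vitali's theorem (with Hardy supplying uniform integrability), rather than your local dominated-convergence argument on $C^\infty_c(\Omega)$ plus density.

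For uniqueness you overshoot. No Picone/D\'iaz--Sa\'a machinery is needed, and in the variable-exponent setting that route is delicate. The paper simply tests the difference of the two equations with $(\overline{u}-\overline{v})^+$: the left side is $\geq0$ by monotonicity of $V$, while the right side equals $\lambda\int_\Omega(\overline{u}^{-\eta(x)}-\overline{v}^{-\eta(x)})(\overline{u}-\overline{v})^+\,dx\leq0$ because $s\mapsto s^{-\eta(x)}$ is strictly decreasing; strict monotonicity of $V$ forces $\overline{u}\leq\overline{v}$, and symmetry gives equality. Your $\lambda$-monotonicity argument is essentially the paper's.
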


\begin{proof}
	For the existence and uniqueness part of the proof we assume for simplicity that $\lambda=1$. 
	
	To this end, let $g \in \Lp{p(\cdot)}$ and let $\eps \in (0,1]$. We consider the following Dirichlet problem
	\begin{align}\label{2}
	-\Delta_{p(\cdot)} u - \Delta_{q(\cdot)} u = \big[|g(x)|+\eps\big]^{-\eta(x)} \quad \text{in }\Omega, \quad u\big|_{\partial\Omega}=0, \quad u>0.
	\end{align}

	Let $V\colon \Wpzero{p(\cdot)}\to \Wpzero{p(\cdot)}^*=W^{-1,p'(\cdot)}(\Omega)$ be the nonlinear operator defined by
	\begin{align*}
		V(u)=A_{p(\cdot)}(u)+A_{q(\cdot)}(u) \quad\text{ for all } u\in\Wpzero{p(\cdot)}.
	\end{align*}
	This operator is bounded, continuous, strictly monotone and so maximal monotone, see Proposition \ref{proposition_2}. It is clear that it is also coercive, see Proposition \ref{proposition_1}. From Corollary 2.8.7 of Papageorgiou-R\u{a}dulescu-Repov\v{s} \cite[p.\,135]{13-Papageorgiou-Radulescu-Repovs-2019} we know that $V\colon \Wpzero{p(\cdot)}\to W^{-1,p'(\cdot)}(\Omega)$ is surjective. Since $[|g(\cdot)|+\eps]^{-\eta(\cdot)}\in \Linf$ we can find  $\hat{v}_\eps \in \Wpzero{p(\cdot)}$ such that
	\begin{align*}
		V(\hat{v}_\eps)=\big[|g|+\eps\big]^{-\eta(\cdot)}\quad \text{in }W^{-1,p'(\cdot)}(\Omega).
	\end{align*}
	From the strict monotonicity of $V$ we know that $\hat{v}_\eps$ is the unique solution of \eqref{2}. Moreover, by acting with $-\hat{v}_\eps^-\in \W$ we obtain $\hat{v}_\eps\geq 0$ and $\hat{v}_\eps\not\equiv 0$. Thus, we have
	\begin{align*}
		-\Delta_{p(\cdot)} \hat{v}_\eps - \Delta_{q(\cdot)} \hat{v}_\eps = \big[|g|+\eps\big]^{-\eta(x)} \quad \text{in }\Omega, \quad \hat{v}_\eps\big|_{\partial\Omega}=0.
	\end{align*}
	
	Theorem 4.1 of Fan-Zhao \cite{5-Fan-Zhao-1999} implies that $\hat{v}_\eps\in\Linf$. Invoking Corollary 1.1 of Tan-Fang \cite{21-Tan-Fang-2013} (see also Lemma 3.3 of Fukagai-Narukawa \cite{6-Fukagai-Narukawa-2007}) we have that $\hat{v}_\eps\in C^1_0(\Omega)_+\setminus \{0\}$. Finally, the anisotropic maximum principle of Zhang \cite[Theorem 1.2]{22-Zhang-2005} says that $\hat{v}_\eps \in \interior$.
	
	Now we can define the solution map $K_\eps \colon \Lp{p(\cdot)}\to \Lp{p(\cdot)}$ given by
	\begin{align*}
		K_\eps(g)=\hat{v}_\eps.
	\end{align*}
	We have
	\begin{align}\label{3}
		\l\lan A_{p(\cdot)}\l(\hat{v}_\eps\r),h\r\ran + \l\lan A_{q(\cdot)}\l(\hat{v}_\eps\r),h\r\ran =\into \frac{h}{\l[|g|+\eps\r]^{\eta(x)}}\diff x
	\end{align}
	for all $h \in \W$. Choosing $h=\hat{v}_\eps=K_\eps(g)\in\W$ in \eqref{3} gives
	\begin{align*}
		\varrho_{p(\cdot)}\l(\nabla \hat{v}_\eps\r)+\varrho_{q(\cdot)}\l(\nabla \hat{v}_\eps\r) \leq c_\eps \l\|\hat{v}_\eps \r\|\quad\text{for some }c_\eps>0.
	\end{align*}
	Assume that  $\l\|\hat{v}_\eps \r\|\geq 1$, then, by Proposition \ref{proposition_1}, one gets
	\begin{align*}
		\l\|\hat{v}_\eps \r\|^{p_-}\leq c_\eps \l\|\hat{v}_\eps \r\|
	\end{align*}
	and so,
	\begin{align}\label{4}
		\l\|\hat{v}_\eps \r\|^{p_--1}=\l\| K_\eps\l(g\r)\r\|^{p_--1}\leq c_\eps \quad\text{for all }g\in \Lp{p(\cdot)}.
	\end{align}
	It follows that $K_\eps \colon \Lp{p(\cdot)}\to \Lp{p(\cdot)}$ maps $\LP$ onto a bounded subset of $\W$.
	
	{\bf Claim 1:} $K_\eps \colon \Lp{p(\cdot)}\to \Lp{p(\cdot)}$ is continuous.

	Let $g_n\to g$ in $\Lp{p(\cdot)}$ and let $\hat{v}_n=K_\eps(g_n)$ with $n\in\N$. From \eqref{4} we know that
	\begin{align*}
		\l\{\hat{v}_n\r\}_{n\in\N}=\l\{K_\eps(g_n)\r\}_{n\in\N}\subseteq \W \quad \text{is bounded}.
	\end{align*}
	
	We may assume that
	\begin{align}\label{5}
		\hat{v}_n \weak \hat{v} \quad\text{in }\Wpzero{p(\cdot)} 
		\quad\text{and}\quad
		\hat{v}_n \to \hat{v} \quad\text{in }\Lp{p(\cdot)}.
	\end{align}
	We have
	\begin{align}\label{6}
		\l\lan A_{p(\cdot)}\l(\hat{v}_n\r),h\r\ran + \l\lan A_{q(\cdot)}\l(\hat{v}_n\r),h\r\ran =\into \frac{h}{\l[|g_n|+\eps\r]^{\eta(x)}}\diff x
	\end{align}
	for all $h\in \Wpzero{p(\cdot)}$ and for all $n\in\N$. We choose $h= \hat{v}_n-\hat{v} \in \Wpzero{p(\cdot)}$ in \eqref{6}, pass to the limit as $n\to \infty$ and use \eqref{5} and the fact that 
	\begin{align*}
		\l\{\frac{1}{\l(|g_n|+\eps\r)^{\eta(\cdot)}}\r\}_{n\in\N}\subseteq \Linf \quad \text{is bounded}.
	\end{align*}
	This yields
	\begin{align*}
		\lim_{n\to\infty} \l [\l\lan A_{p(\cdot)}\l(\hat{v}_n\r),\hat{v}_n-\hat{v} \r\ran+\l\lan A_{q(\cdot)}\l(\hat{v}_n\r),\hat{v}_n-\hat{v} \r\ran\r]=0.
	\end{align*}
	Due to the monotonicity of $A_{q(\cdot)}(\cdot)$ we obtain
	\begin{align*}
		\limsup_{n\to\infty} \l [\l\lan A_{p(\cdot)}\l(\hat{v}_n\r),\hat{v}_n-\hat{v} \r\ran+\l\lan A_{q(\cdot)}\l(\hat{v}\r),\hat{v}_n-\hat{v}\r\ran\r]\leq 0.
	\end{align*}
	From this and \eqref{5} we then conclude that
	\begin{align*}
		\limsup_{n\to\infty} \l\lan A_{p(\cdot)}\l(\hat{v}_n\r),\hat{v}_n-\hat{v} \r\ran \leq 0.
	\end{align*}
	By the $(\Ss)_+$-property of $A_{p(\cdot)}$, see Proposition \ref{proposition_2}, we have that
	\begin{align}\label{7}
		\hat{v}_n \to \hat{v} \quad \text{in }\Wpzero{p(\cdot)}.
	\end{align}
	Passing  to the limit in \eqref{6} as $n\to \infty$ and using \eqref{7} gives
	\begin{align*}
		\l\lan A_{p(\cdot)}\l(\hat{v}\r),h\r\ran + \l\lan A_{q(\cdot)}\l(\hat{v}\r),h\r\ran =\into \frac{h}{\l[|g|+\eps\r]^{\eta(x)}}\diff x
	\end{align*}
	for all $h\in \Wpzero{p(\cdot)}$. Thus, $\hat{v} =K_\eps(g)$.  Therefore, by the Urysohn criterion for the convergence of sequences, we conclude that for the original sequence we have
	\begin{align*}
		\hat{v}_n=K_\eps(g_n) \to K_\eps (g) =\hat{v}.
	\end{align*}
	Hence, $K_\eps$ is continuous and this proves Claim 1.
	
	Recall that $K_\eps(\LP)\subseteq \W$ is bounded, see \eqref{4}. On the other hand, we have the compact embedding $\W \hookrightarrow \LP$. This implies
	\begin{align}\label{8}
		\overline{K_\eps(\LP)}^{\|\cdot\|_{p(\cdot)}} \subseteq \LP \quad\text{is compact}.
	\end{align}
	
	Claim 1 and \eqref{8} permit the use of the Schauder-Tychonov fixed point theorem, see Papageorgiou-R\u{a}dulescu-Repov\v{s} \cite[Theorem 4.3.21]{13-Papageorgiou-Radulescu-Repovs-2019}. So, we can find $\overline{u}_\eps\in\W$ such that
	\begin{align*}
		K_\eps\l(\overline{u}_\eps\r)=\overline{u}_\eps\subseteq \interior.
	\end{align*}
	Hence
	\begin{align}\label{9}
		-\Delta_{p(\cdot)}\overline{u}_\eps-\Delta_{q(\cdot)}\overline{u}_\eps=\big[\overline{u}_\eps+\eps \big]^{-\eta(x)}\quad\text{in }\Omega, \quad \overline{u}_\eps\big|_{\partial\Omega}=0.
	\end{align}

	In fact, this solution is unique. Indeed, suppose that $\overline{y}_\eps\in\interior$ is another positive solution of \eqref{9}. Then we have that
	\begin{align*}
		0&\leq \l\lan A_{p(\cdot)}\l(\overline{u}_\eps\r)-A_{p(\cdot)}\l(\overline{y}_\eps\r),\l(\overline{u}_\eps-\overline{y}_\eps\r)^+\r\ran + \l\lan A_{q(\cdot)}\l(\overline{u}_\eps\r)-A_{q(\cdot)}\l(\overline{y}_\eps\r),\l(\overline{u}_\eps-\overline{y}_\eps\r)^+\r\ran\\
		& =\into \l[ \frac{1}{\l[\overline{u}_\eps+\eps\r]^{\eta(x)}}-\frac{1}{\l[\overline{y}_\eps+\eps\r]^{\eta(x)}}\r] \l(\overline{u}_\eps-\overline{y}_\eps\r)^+\diff x \leq 0.
	\end{align*}
	We obtain $\overline{u}_\eps\leq \overline{y}_\eps$.
	
	Interchanging the roles of $\overline{u}_\eps$ and $\overline{y}_\eps$ in the argument above also gives $\overline{y}_\eps\leq \overline{v}_\eps$. Hence, $\overline{u}_\eps=\overline{y}_\eps$. This proves the uniqueness of the solution $\overline{u}_\eps\in\interior$ of problem \eqref{9}.

	{\bf Claim 2:} If $0<\eps'\leq \eps$, then $\overline{u}_\eps \leq \overline{u}_{\eps'}$.

	First note that $\overline{u}_\eps, \overline{u}_{\eps'}\in\interior$. Since $\eps'\leq \eps$ we have
	\begin{align}\label{11}
		-\Delta_{p(\cdot)} \overline{u}_{\eps'} -\Delta_{q(\cdot)} \overline{u}_{\eps'}=
		\l[\overline{u}_{\eps'}+\eps'\r]^{-\eta(x)}\geq \l[\overline{u}_{\eps'}+\eps\r]^{-\eta(x)}\quad\text{in }\Omega.
	\end{align}

	Next we introduce the Carath\'eodory function $l_\eps\colon \Omega\times\R\to\R$ defined by
	\begin{align}\label{12}
		l_\eps(x,s)=
		\begin{cases}
			\l[s^++\eps\r]^{-\eta(x)} &\text{if } s \leq \overline{u}_{\eps'}(x),\\
			\l[\overline{u}_{\eps'}(x)+\eps\r]^{-\eta(x)} & \text{if } \overline{u}_{\eps'}(x) < s.
		\end{cases}
	\end{align}

	Let $L_\eps(x,s)=\int^s_0 l_\eps(x,t)\diff t$ and consider the $C^1$-functional $\psi_\eps\colon \Wpzero{p(\cdot)}\to \R$ defined by
	\begin{align*}
		\psi_\eps(u)=\into \frac{1}{p(x)} |\nabla u|^{p(x)}\diff x +\into \frac{1}{q(x)} |\nabla u|^{q(x)}\diff x-\into L_\eps(x,u)\diff x
	\end{align*}
	for all $u \in \Wpzero{p(\cdot)}$. From the definition of the truncation in \eqref{12} we see that
	\begin{align*}
		\psi_\eps(u) \geq \frac{1}{p_+} \l[\varrho_{p(\cdot)}(\nabla u)+\varrho_{q(\cdot)}(\nabla u) \r]-c_1
	\end{align*}
	for some $c_1>0$. Hence, $\psi_\eps\colon \Wpzero{p(\cdot)}\to \R$ is coercive. Moreover, by the anisotropic Sobolev embedding theorem we know that $\psi_\eps\colon \Wpzero{p(\cdot)}\to \R$ is sequentially weakly lower semicontinuous. Then, by the Weierstra\ss-Tonelli theorem, we can find $\tilde{u}_\eps\in\Wpzero{p(\cdot)}$ such that 
	\begin{align}\label{13}
		\psi_\eps\l(\tilde{u}_\eps\r)=\min \l[\psi_\eps(u)\,:\,u \in \Wpzero{p(\cdot)}\r].
	\end{align}
	
	Let $u\in \interior$ be fixed. Since $\overline{u}_{\eps'}\in\interior$, we can take $t \in (0,1)$ small enough such that $tu \leq\overline{u}_{\eps'}$, see also Proposition 4.1.22 of Papageorgiou-R\u{a}dulescu-Repov\v{s} \cite{13-Papageorgiou-Radulescu-Repovs-2019}. From \eqref{12} we see that have that $\psi_\eps(tu)<0$ and so $\psi_\eps\l(\tilde{u}_\eps\r)<0=\psi_\eps(0)$. Hence, $\tilde{u}_\eps\neq 0$.
	
	Taking \eqref{13} into account we have $\psi'_\eps\l(\tilde{u}_\eps\r)=0$, that is,
	\begin{align}\label{14}
		\l\lan A_{p(\cdot)}\l(\tilde{u}_\eps\r),h\r\ran+\l\lan A_{q(\cdot)}\l(\tilde{u}_\eps\r),h\r\ran=\into l_\eps \l(x,\tilde{u}_\eps\r)h\diff x
	\end{align}
	for all $h\in \Wpzero{p(\cdot)}$. First we test \eqref{14} with $h=-\l(\tilde{u}_\eps\r)^-\in\Wpzero{p(\cdot)}$ in order to get
	\begin{align*}
		\varrho_{p(\cdot)}\l(\nabla \l(\tilde{u}_\eps\r)^-\r)+\varrho_{q(\cdot)}\l(\nabla \l(\tilde{u}_\eps\r)^-\r) \leq 0.
	\end{align*}
	Proposition \ref{proposition_1} then implies that
	\begin{align*}
		\tilde{u}_\eps \geq 0 \quad\text{and}\quad \tilde{u}_\eps\neq 0.
	\end{align*}
	
	Next, we test \eqref{14} with $h= \l(\tilde{u}_\eps-\overline{u}_{\eps'}\r)^+ \in\Wpzero{p(\cdot)}$. This yields, by applying \eqref{12} and \eqref{11},
	\begin{align*}
		&\l\lan A_{p(\cdot)}\l(\tilde{u}_\eps\r),\l(\tilde{u}_\eps-\overline{u}_{\eps'}\r)^+\r\ran+\l\lan A_{q(\cdot)}\l(\tilde{u}_\eps\r),\l(\tilde{u}_\eps-\overline{u}_{\eps'}\r)^+\r\ran\\
		&=\into \frac{\l(\tilde{u}_\eps-\overline{u}_{\eps'}\r)^+}{\l[\overline{u}_{\eps'}+\eps\r]^{\eta(x)}} \diff x\\
		& \leq \l\lan A_{p(\cdot)}\l(\overline{u}_{\eps'}\r),\l(\tilde{u}_\eps-\overline{u}_{\eps'}\r)^+\r\ran+\l\lan A_{q(\cdot)}\l(\overline{u}_{\eps'}\r),\l(\tilde{u}_\eps-\overline{u}_{\eps'}\r)^+\r\ran.
	\end{align*}
	This implies $\tilde{u}_\eps \leq \overline{u}_{\eps'}$ and so it holds
	\begin{align}\label{15}
		\tilde{u}_\eps \in \l[0,\overline{u}_{\eps'}\r], \quad \tilde{u}_\eps \neq 0.
	\end{align}
	
	From \eqref{15}, \eqref{12}, \eqref{14} it follows that $\tilde{u}_\eps$ is a positive solution of problem \eqref{9}. Hence, $\tilde{u}_\eps=\overline{u}_\eps\in\interior$. Then, with view to \eqref{15}, we have 
	\begin{align*}
		\overline{u}_\eps \leq \overline{u}_{\eps'} \quad\text{for all } 0<\eps'\leq \eps.
	\end{align*}
	This proves Claim 2.
	
	Now we will let $\eps \to 0^+$ to get a solution of the purely singular problem \eqref{problem_aux}. 
	
	So, let $\eps_n\to 0^+$ and let $\overline{u}_n=\overline{u}_{\eps_n}\in\interior$ be the unique solution of problem \eqref{9} for $n \in \N$. From Claim 2 we have
	\begin{align*}
		0 \leq \overline{u}_1 \leq \overline{u}_n\quad\text{for all }n \in \N.
	\end{align*}
	It follows that
	\begin{align}\label{16}
		0 \leq \frac{1}{\l[\overline{u}_n+\eps_n\r]^{\eta(x)}} \leq \frac{1}{\overline{u}_n^{\eta(x)}} \leq \frac{1}{\overline{u}_1^{\eta(x)}}\quad\text{for all }n\in\N.
	\end{align}
	
	Since $\overline{u}_n \in \interior$ is a solution of \eqref{9}, we have
	\begin{align}\label{17}
		\l\lan A_{p(\cdot)}\l(\overline{u}_n\r),h\r\ran+\l\lan A_{q(\cdot)}\l(\overline{u}_n\r),h\r\ran
		=\into \frac{h}{\l[\overline{u}_n+\eps_n\r]^{\eta(x)}}\diff x
	\end{align}
	for all $h \in \Wpzero{p(\cdot)}$ and for all $n \in \N$. We choose $h= \overline{u}_n\in\Wpzero{p(\cdot)}$ in \eqref{17} which by using \eqref{16} gives
	\begin{align}\label{18}
		\varrho_{p(\cdot)}\l(\nabla \overline{u}_n\r)+\varrho_{q(\cdot)}\l(\nabla \overline{u}_n\r) \leq \into\frac{\overline{u}_n}{\overline{u}_1^{\eta(x)}}\diff x \quad\text{for all }n \in \N.
	\end{align}
	From Lemma 14.16 of Gilbarg-Trudinger \cite[p.\,355]{9-Gilbarg-Trudinger-1998} we know that there exists $\delta_0>0$ such that $\hat{d}(\cdot)=\hat{d}(\cdot,\partial\Omega)\in C^2(\overline{\Omega}_{\delta_0})$ with $\overline{\Omega}_{\delta_0} =\{x\in \close\,:\, \hat{d}(x)<\delta_0\}$. Hence, $\hat{d}\in C^1_0(\close))_+\setminus\{0\}\}$ and so there exists $c_2>0$ such that $c_2\hat{d} \leq \overline{u}_1$ since $\overline{u}_1 \in \interior$. Then, from \eqref{16} and \eqref{18} we obtain
	\begin{align}\label{19}
		\varrho_{p(\cdot)}\l(\nabla \overline{u}_n\r) \leq c_3\l\|\overline{u}_n\r\|
	\end{align}
	for some $c_3>0$ and for all $n\in\N$. This inequality follows from the anisotropic Hardy's inequality due to Harjulehto-H\"{a}st\"{o}-Koskenoja \cite{10-Harjulehto-Hasto-Koskenoja} and the Poincar\'e inequality. Then \eqref{19} and Proposition \ref{proposition_1} imply that $\{\overline{u}_n\}_{n\in\N}\subseteq \Wpzero{p(\cdot)}$ is bounded.
	
	From Lemma A.5 of Saoudi-Ghanmi \cite{20-Saoudi-Ghanmi-2017} it follows that $\{\overline{u}_n\}_{n\in\N}\subseteq\Linf$ is bounded and so using Lemma 3.3 of Fukagai-Narukawa \cite{6-Fukagai-Narukawa-2007}, we can find $\alpha \in (0,1)$ and $c_4>0$ such that 
	\begin{align}\label{20}
		\overline{u}_n \in C^{1,\alpha}_0(\close)=C^{1,\alpha}(\close)\cap C^1_0(\close)
		\quad\text{and}\quad
		\l\|\overline{u}_n\r\|_{C^{1,\alpha}_0(\close)} \leq c_4
	\end{align}
	for all $n \in \N$.  
	
	We know that $C^{1,\alpha}_0(\close)\hookrightarrow C^1_0(\close)$ is compactly embedded. So, from \eqref{20} and by passing to a subsequence if necessary, we may assume that
	\begin{align}\label{21}
		\overline{u}_n \to \overline{u}_\lambda\quad\text{in }C^1_0(\close).
	\end{align}
	Hence, $\overline{u}_\lambda \geq \overline{u}_1$ and so $\overline{u}_\lambda \in \interior$.
	
	From the anisotropic Hardy's inequality, see Harjulehto-H\"{a}st\"{o}-Koskenoja \cite{10-Harjulehto-Hasto-Koskenoja}, we know that 
	\begin{align*}
		\frac{|h|}{\overline{u}_1^{\eta(\cdot)}}\in \Lp{1} \quad\text{for all }h\in\W.
	\end{align*}
	From \eqref{16} we then see that
	\begin{align*}
		\l\{\frac{h}{\l[\overline{u}_n+\eps_n\r]^{\eta(\cdot)}}\r\}_{n\in\N}\subseteq \Lp{1} \text{ is uniformly integrable}
	\end{align*}
	for all $h\in\W$. Moreover, we have
	\begin{align*}
		\frac{h}{\l[\overline{u}_n+\eps_n\r]^{\eta(x)}}\longrightarrow \frac{h}{\l[\overline{u}_\lambda+\eps_n\r]^{\eta(x)}}\quad\text{for a.\,a.\,}x\in\Omega.
	\end{align*}
	So, from Vitali's theorem, see Papageorgiou-Winkert \cite[Theorem 2.3.44]{17-Papageorgiou-Winkert-2018}, we obtain
	\begin{align}\label{22}
		\into \frac{h}{\l[\overline{u}_n+\eps_n\r]^{\eta(x)}}\diff x\longrightarrow \into \frac{h}{\overline{u}_\lambda^{\eta(x)}}\diff x
	\end{align}
	for all $h \in \W$. Therefore, if we pass to the limit as $n\to\infty$ in \eqref{17} and use \eqref{21} as well as \eqref{22}, one gets
	\begin{align*}
		\l\lan A_{p(\cdot)}\l(\overline{u}_\lambda\r),h\r\ran
		+\l\lan A_{q(\cdot)}\l(\overline{u}_\lambda\r),h\r\ran
		= \into \frac{h}{\overline{u}_\lambda^{\eta(x)}}\diff x\quad\text{for all }h\in\Wpzero{p(\cdot)}.
	\end{align*}
	This shows that $\overline{u}_\lambda\in\interior$ is a positive solution of \eqref{problem_aux} for $\lambda>0$.
	
	As before, exploiting the strict monotonicity of $s\to s^{-\eta(x)}$ on $\overset{\circ}{\R}_+=(0,+\infty)$, we show that this solution $\overline{u}_\lambda\in\interior$ is unique.
	
	An argument similar to that of Claim 2 show that $0<\lambda'<\lambda$ implies $\overline{u}_{\lambda'}\leq \overline{u}_\lambda$. This finishes the proof of the proposition.
\end{proof}

\section{Positive solutions}

We introduce the following two sets
\begin{align*}
	\mathcal{L}&=\left\{\lambda>0: \text{problem \eqref{problem} has a positive solution}\right\},\\
	\mathcal{S}_\lambda&=\left\{u: u\text{ is a positive solution of problem \eqref{problem}}\right\}.
\end{align*}

First we show that the set $\mathcal{L}$ of admissible parameters is nonempty and we determine the regularity properties of the elements of $\mathcal{S}_\lambda$ for $\lambda\in\mathcal{L}$.

Let $\overline{u}_1 \in\interior$ be the unique positive solution of \eqref{problem_aux} with $\lambda=1$, see Proposition \ref{proposition_5}. From the proof of the Lemma of Lazer-McKenna \cite[p.\,274]{12-Lazer-McKenna-1991} we know that $\overline{u}_1(\cdot)^{-\eta(\cdot)}\in \Lp{1}$. We consider the following anisotropic Dirichlet problem
\begin{align}\tag*{(Au)'}\label{problem_aux2}
	-\Delta_{p(\cdot)} u - \Delta_{q(\cdot)} u = 1+\overline{u}_1^{-\eta(x)} \quad \text{in }\Omega, \quad u\big|_{\partial\Omega}=0, \quad u>0.
\end{align}

\begin{proposition}\label{proposition_6}
	If hypotheses H$_0$ hold, then problem \ref{problem_aux2} has a unique positive solution $\tilde{u}\in\interior$ such that $\overline{u}_1 \leq \tilde{u}$.
\end{proposition}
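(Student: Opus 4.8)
The plan is to solve problem \ref{problem_aux2} by monotone operator theory, using the sum operator $V=A_{p(\cdot)}+A_{q(\cdot)}$ from the proof of Proposition \ref{proposition_5}, and then to read off both the regularity and the position of the solution from the inequality $\overline{u}_1\leq\tilde u$. The first point to settle is that the datum $\hat{h}(x)=1+\overline{u}_1(x)^{-\eta(x)}$ represents an element of $\Wpzero{p(\cdot)}^*=W^{-1,p'(\cdot)}(\Omega)$. Since $1\in\Linf\subseteq W^{-1,p'(\cdot)}(\Omega)$, only the singular part needs attention, and here I would invoke---exactly as in the proof of Proposition \ref{proposition_5}---the anisotropic Hardy inequality of Harjulehto-H\"{a}st\"{o}-Koskenoja \cite{10-Harjulehto-Hasto-Koskenoja}, which gives $\frac{|h|}{\overline{u}_1^{\eta(\cdot)}}\in\Lp{1}$ together with a bound $\into\frac{|h|}{\overline{u}_1^{\eta(x)}}\diff x\leq c\,\|h\|$ valid for all $h\in\Wpzero{p(\cdot)}$. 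Hence $h\mapsto\into\hat{h}\,h\diff x$ is a bounded linear functional on $\Wpzero{p(\cdot)}$.

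As recorded in the proof of Proposition \ref{proposition_5}, the operator $V$ is bounded, continuous, strictly monotone (hence maximal monotone) and coercive, so it is surjective. Therefore there exists $\tilde u\in\Wpzero{p(\cdot)}$ with $V(\tilde u)=\hat{h}$, that is,
\begin{align*}
\l\lan A_{p(\cdot)}(\tilde u),h\r\ran+\l\lan A_{q(\cdot)}(\tilde u),h\r\ran=\into\l(1+\overline{u}_1^{-\eta(x)}\r)h\diff x\quad\text{for all }h\in\Wpzero{p(\cdot)},
\end{align*}
and the strict monotonicity of $V$ makes this solution unique.

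Next I would prove $\overline{u}_1\leq\tilde u$ (which already forces $\tilde u\neq0$). Recalling from the end of the proof of Proposition \ref{proposition_5} that $\overline{u}_1$ satisfies the weak form of \eqref{problem_aux} with $\lambda=1$ against every test function in $\Wpzero{p(\cdot)}$, and subtracting this from the displayed identity, one gets $\l\lan V(\tilde u)-V(\overline{u}_1),h\r\ran=\into h\diff x$ for all $h\in\Wpzero{p(\cdot)}$. Choosing $h=(\overline{u}_1-\tilde u)^+\in\Wpzero{p(\cdot)}$ and using the monotonicity of $A_{p(\cdot)}$ and $A_{q(\cdot)}$, i.e. $\l\lan V(\overline{u}_1)-V(\tilde u),(\overline{u}_1-\tilde u)^+\r\ran\geq0$, forces $\into(\overline{u}_1-\tilde u)^+\diff x\leq0$, whence $(\overline{u}_1-\tilde u)^+=0$ and $\overline{u}_1\leq\tilde u$ a.\,e. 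The singular integral $\into\overline{u}_1^{-\eta(x)}(\overline{u}_1-\tilde u)^+\diff x$ occurring in this computation is finite, again by the Hardy bound, since $(\overline{u}_1-\tilde u)^+\in\Wpzero{p(\cdot)}$.

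Finally, for $\tilde u\in\interior$: since $\overline{u}_1\in\interior$ there is $c>0$ with $\overline{u}_1\geq c\,\hat{d}$, so $0\leq\overline{u}_1^{-\eta(x)}\leq c'\hat{d}(x)^{-\eta(x)}$ with $\eta_+<1$, and combined with $\tilde u\geq\overline{u}_1\geq c\,\hat{d}$ this places $\tilde u$ in the setting of the anisotropic singular regularity theory. Thus Lemma A.5 of Saoudi-Ghanmi \cite{20-Saoudi-Ghanmi-2017} gives $\tilde u\in\Linf$, and then Lemma 3.3 of Fukagai-Narukawa \cite{6-Fukagai-Narukawa-2007} (see also Corollary 1.1 of Tan-Fang \cite{21-Tan-Fang-2013}) provides $\alpha\in(0,1)$ with $\tilde u\in C^{1,\alpha}_0(\close)$. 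Writing $\tilde u=\overline{u}_1+(\tilde u-\overline{u}_1)$ with $\overline{u}_1\in\interior$ and $\tilde u-\overline{u}_1\in C^1_0(\close)_+$ (a consequence of $\tilde u\in C^1_0(\close)$ and $\tilde u\geq\overline{u}_1$), and noting that adding a nonnegative $C^1_0(\close)$-function to an element of $\interior$ preserves positivity in $\Omega$ and a strictly negative inward normal derivative on $\partial\Omega$, we conclude $\tilde u\in\interior$. I expect the two delicate points to be exactly these: that the singular datum $\overline{u}_1^{-\eta(\cdot)}$ acts as a bounded functional on $\Wpzero{p(\cdot)}$ (handled by the anisotropic Hardy inequality as in Proposition \ref{proposition_5}), and the up-to-the-boundary $C^1$-regularity of $\tilde u$, which is not covered by standard elliptic estimates for bounded data and relies instead on the anisotropic singular regularity theory; granting that, membership in $\interior$ is immediate from $\overline{u}_1\leq\tilde u$.
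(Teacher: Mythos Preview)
Your argument is correct and genuinely more direct than the paper's. The paper proceeds by approximation: it replaces the singular datum by $1+[\overline{u}_1+\tfrac{1}{n}]^{-\eta(x)}\in\Linf$, solves each regularized problem via the surjectivity of $V$ (obtaining $\tilde u_n\in\interior$ from standard regularity for bounded data), derives uniform $C^{1,\alpha}_0$-bounds from the Hardy estimate, and then passes to the limit in $C^1_0(\close)$; the interior-cone property is obtained by comparing with the solution $\underline{u}$ of $-\Delta_{p(\cdot)}u-\Delta_{q(\cdot)}u=1$, and the inequality $\overline{u}_1\leq\tilde u$ is proved at the very end by the same monotonicity test you use. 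You instead observe at the outset---via the anisotropic Hardy inequality---that $1+\overline{u}_1^{-\eta(\cdot)}$ already defines an element of $W^{-1,p'(\cdot)}(\Omega)$, so a single application of the surjectivity of $V$ produces $\tilde u$ directly; then $\overline{u}_1\leq\tilde u$ follows by comparison, and this inequality simultaneously yields the lower bound $\tilde u\geq c\hat d$ needed for the singular regularity theory and the membership $\tilde u\in\interior$. Your route is shorter and avoids the limit passage altogether; the paper's route has the mild advantage that the regularity of each $\tilde u_n$ comes from the clean bounded-right-hand-side theory, so the appeal to the singular $C^{1,\alpha}$ estimates is packaged into the uniform bound rather than invoked once for a genuinely singular datum.
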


\begin{proof}
	In order to establish the existence of a positive solution, we argue as in the first part of the proof of Proposition \ref{proposition_5}. So, we consider the approximation
	\begin{align*}
		-\Delta_{p(\cdot)} u - \Delta_{q(\cdot)} u = 1+\l[\overline{u}_1+\frac{1}{n}\r]^{-\eta(x)} \quad \text{in }\Omega, \quad u\big|_{\partial\Omega}=0, \quad n\in\N.
	\end{align*}
	This problem has a unique solution $\tilde{u}_n\in\interior$. Testing the equation with $\tilde{u}_n$ we obtain
	\begin{align*}
		\varrho_{p(\cdot)}\l(\nabla \tilde{u}_n\r) \leq \into \tilde{u}_n\diff x+\into \frac{\tilde{u}_n}{\overline{u}_1^{\eta(x)}}\diff x.
	\end{align*}
	As before, by using the anisotropic Hardy's inequality, we conclude that
	\begin{align*}
		\varrho_{p(\cdot)}\l(\nabla \tilde{u}_n\r) \leq c_5 \l\|\tilde{u}_n\r\|\quad\text{for all }n\in\N\text{ and for some }c_5>0.
	\end{align*}
	Therefore, $\{\tilde{u}_n\}_{n\in\N}\subseteq\W$ is bounded.
	
	As in the proof of Proposition \ref{proposition_5} we have that  $\{\tilde{u}_n\}_{n\in\N}\subseteq C^1_0(\close)$ is relatively compact and so we may assume that
	\begin{align}\label{23}
		\tilde{u}_n\to \tilde{u}\quad\text{in }C^1_0(\close).
	\end{align}

	Moreover, if $\underline{u}\in\interior$ is the unique positive solution of
	\begin{align*}
		-\Delta_{p(\cdot)} u - \Delta_{q(\cdot)} u = 1\quad \text{in }\Omega, \quad u\big|_{\partial\Omega}=0,
	\end{align*}
	then by the weak comparison principle, we have $\underline{u}\leq \tilde{u}_n$ for all $n\in\N$. Hence, $\underline{u}\leq \tilde{u}$ and so $\tilde{u} \in\interior$. Furthermore, using \eqref{23} as $n\to \infty$ in the corresponding equation for $\tilde{u}_n$, we obtain
	\begin{align*}
		\l\lan A_{p(\cdot)} \l(\tilde{u}\r),h\r\ran +\l\lan A_{q(\cdot)} \l(\tilde{u}\r),h\r\ran = \into \l[1+\overline{u}_1^{-\eta(x)}\r]h\diff x
	\end{align*}
	for all $h\in\W$. Thus, $\tilde{u}\in\interior$ is a positive solution of \ref{problem_aux2}.
	
	On account of Proposition \ref{proposition_2} this positive solution is unique. Moreover we have
	\begin{align*}
		0&\leq \l\lan A_{p(\cdot)} \l(\overline{u}_1\r)-A_{p(\cdot)} \l(\tilde{u}\r),\l(\overline{u}_1-\tilde{u}\r)^+\r\ran + \l\lan A_{q(\cdot)} \l(\overline{u}_1\r)-A_{q(\cdot)} \l(\tilde{u}\r),\l(\overline{u}_1-\tilde{u}\r)^+\r\ran\\
		&=\into \l[\overline{u}_1^{-\eta(x)}-\l(1+\overline{u}_1^{-\eta(x)}\r)\r]\l(\overline{u}_1-\tilde{u}\r)^+\diff x\leq 0.
	\end{align*}
	This shows that $\overline{u}_1 \leq \tilde{u}$.
\end{proof}

We are going to apply $\overline{u}_\lambda, \tilde{u}\in\interior$ in order to show the nonemptiness of $\mathcal{L}$.

\begin{proposition}\label{proposition_7}
	If hypotheses H$_0$ and H$_1$ hold, then $\mathcal{L}\neq \emptyset$ and $\mathcal{S}_\lambda\subseteq \interior$ for every $\lambda \in \mathcal{L}$.
\end{proposition}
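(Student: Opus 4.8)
The plan is to prove the two claims in turn, the regularity part being used to upgrade the solution produced in the first part. For $\mathcal{L}\neq\emptyset$ I will show that for all sufficiently small $\lambda>0$ problem \eqref{problem} has a positive solution lying in the order interval $[\overline{u}_\lambda,\tilde{u}]$ built from the auxiliary functions of Propositions \ref{proposition_5} and \ref{proposition_6}, using a truncated variational argument, and for $\mathcal{S}_\lambda\subseteq\interior$ I will run the anisotropic regularity machinery after confining an arbitrary element of $\mathcal{S}_\lambda$ between $\overline{u}_\lambda$ and a bounded function.

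\emph{Nonemptiness.} Set $M_0=\|a\|_\infty\bigl(1+\max\{1,\|\tilde{u}\|_\infty^{r_+-1}\}\bigr)$, so that $f(x,\tilde{u}(x))\leq M_0$ a.e.\ by H$_1$(i) and $\tilde{u}\in\Linf$, and put $\lambda_0=\min\{1,M_0^{-1}\}$. For $\lambda\in(0,\lambda_0]$ we have, using $\overline{u}_1\leq\tilde{u}$ and $\lambda\leq 1$,
\begin{align*}
	\lambda\bigl[\tilde{u}^{-\eta(x)}+f(x,\tilde{u})\bigr]\leq \lambda\,\overline{u}_1^{-\eta(x)}+\lambda M_0\leq 1+\overline{u}_1^{-\eta(x)}\quad\text{in }\Omega,
\end{align*}
so $\tilde{u}$ is a supersolution of \eqref{problem}; moreover $\overline{u}_\lambda$ is a subsolution (it solves \eqref{problem_aux} and $f\geq 0$) and $\overline{u}_\lambda\leq\overline{u}_1\leq\tilde{u}$ by the monotonicity in Proposition \ref{proposition_5}. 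Fix such a $\lambda$ and truncate the reaction between these barriers,
\begin{align*}
	\hat{e}_\lambda(x,s)=
	\begin{cases}
		\overline{u}_\lambda(x)^{-\eta(x)}+f(x,\overline{u}_\lambda(x)) & \text{if } s<\overline{u}_\lambda(x),\\
		s^{-\eta(x)}+f(x,s) & \text{if } \overline{u}_\lambda(x)\leq s\leq\tilde{u}(x),\\
		\tilde{u}(x)^{-\eta(x)}+f(x,\tilde{u}(x)) & \text{if } \tilde{u}(x)<s,
	\end{cases}
\end{align*}
set $\hat{E}_\lambda(x,s)=\int_0^s\hat{e}_\lambda(x,t)\diff t$ and consider
\begin{align*}
	\varphi_\lambda(u)=\into\frac{1}{p(x)}|\nabla u|^{p(x)}\diff x+\into\frac{1}{q(x)}|\nabla u|^{q(x)}\diff x-\lambda\into\hat{E}_\lambda(x,u)\diff x,\qquad u\in\W.
\end{align*}
Since $0\leq\hat{e}_\lambda(x,s)\leq\overline{u}_\lambda(x)^{-\eta(x)}+c\leq c'\,\hat{d}(x)^{-\eta(x)}+c$ (using $\overline{u}_\lambda,\tilde{u}\in\interior$ and the boundedness of $f$ on $[0,\|\tilde{u}\|_\infty]$), the anisotropic Hardy inequality of Harjulehto-H\"{a}st\"{o}-Koskenoja \cite{10-Harjulehto-Hasto-Koskenoja}, which gives $\hat{d}^{-\eta(\cdot)}h\in\Lp{1}$ continuously for $h\in\W$, shows that $\varphi_\lambda\in C^1(\W)$, that $\varphi_\lambda$ is coercive (the potential grows at most linearly and is absorbed by the $p(\cdot)$-term via Young's inequality) and that it is sequentially weakly lower semicontinuous (the compact embedding $\W\hookrightarrow\Lp{1}$ together with Vitali's theorem for the singular part). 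Hence $\varphi_\lambda$ has a global minimizer $u_\lambda\in\W$ with $\varphi_\lambda'(u_\lambda)=0$. Testing this identity with $(\overline{u}_\lambda-u_\lambda)^+$ and comparing with \eqref{problem_aux} (using $f\geq 0$ and the strict monotonicity of $A_{p(\cdot)}+A_{q(\cdot)}$) yields $\overline{u}_\lambda\leq u_\lambda$, and testing with $(u_\lambda-\tilde{u})^+$ and comparing with \ref{problem_aux2} via the supersolution inequality above yields $u_\lambda\leq\tilde{u}$. On $[\overline{u}_\lambda,\tilde{u}]$ the truncation is inactive, so $u_\lambda$ is a weak solution of \eqref{problem}, and $u_\lambda\geq\overline{u}_\lambda>0$ in $\Omega$; thus $(0,\lambda_0]\subseteq\mathcal{L}$, in particular $\mathcal{L}\neq\emptyset$.

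\emph{Regularity.} Let $\lambda\in\mathcal{L}$ and $u\in\mathcal{S}_\lambda$. From $f\geq 0$ we get $-\Delta_{p(\cdot)}u-\Delta_{q(\cdot)}u\geq\lambda u^{-\eta(x)}$, while $\overline{u}_\lambda$ solves the same with equality; since $s\mapsto s^{-\eta(x)}$ is decreasing on $(0,\infty)$, testing with $(\overline{u}_\lambda-u)^+$ and using the monotonicity of the operator (the weak comparison principle, as in the proof of Proposition \ref{proposition_6}) gives $\overline{u}_\lambda\leq u$. In particular $u>0$ in $\Omega$ and $u^{-\eta(\cdot)}\leq\overline{u}_\lambda^{-\eta(\cdot)}\leq c\,\hat{d}(\cdot)^{-\eta(\cdot)}$. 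Combining this with H$_1$(i) and the embedding $\W\hookrightarrow\Lp{r(\cdot)}$, the right-hand side of \eqref{problem} for $u$ lies in a suitable anisotropic Lebesgue space, so Lemma A.5 of Saoudi-Ghanmi \cite{20-Saoudi-Ghanmi-2017} gives $u\in\Linf$; then the right-hand side is dominated by $c_1\,\hat{d}(\cdot)^{-\eta(\cdot)}+c_2$, and the anisotropic singular regularity theory (Saoudi-Ghanmi \cite[Appendix 2]{20-Saoudi-Ghanmi-2017} together with \cite[Lemma 3.3]{6-Fukagai-Narukawa-2007}, exactly as in the proof of Proposition \ref{proposition_5}) yields $u\in C^{1,\alpha}_0(\close)$ for some $\alpha\in(0,1)$. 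Finally $u-\overline{u}_\lambda\geq 0$ vanishes on $\partial\Omega$, so $\frac{\partial u}{\partial n}\leq\frac{\partial\overline{u}_\lambda}{\partial n}<0$ on $\partial\Omega$, whence $u\in\interior$. This proves $\mathcal{S}_\lambda\subseteq\interior$, and in particular the solution $u_\lambda$ constructed above lies in $\interior$.

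\emph{Main difficulty.} The delicate points both concern the singular term: producing a coercive $C^1$ energy $\varphi_\lambda$ on $\W$ despite the singularity — which forces the use of the anisotropic Hardy inequality and the a priori confinement $\overline{u}_\lambda\leq u_\lambda\leq\tilde{u}$ — and upgrading an arbitrary element of $\mathcal{S}_\lambda$ to $C^1_0(\close)$ when the equation only controls the right-hand side by $\hat{d}(\cdot)^{-\eta(\cdot)}$ near $\partial\Omega$, for which we rely on the anisotropic singular regularity theory already invoked in Proposition \ref{proposition_5}. By contrast, verifying that $\overline{u}_\lambda,\tilde{u}$ form an ordered sub-supersolution pair and computing the threshold $\lambda_0$ are routine.
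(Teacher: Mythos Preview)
Your proof is correct and follows essentially the same route as the paper: a truncated variational argument on the order interval $[\overline{u}_\lambda,\tilde{u}]$ for small $\lambda$ to obtain nonemptiness, and then the lower barrier $\overline{u}_\lambda\leq u$ combined with the anisotropic singular regularity theory of Saoudi--Ghanmi for $\mathcal{S}_\lambda\subseteq\interior$. The only cosmetic differences are that you compute the threshold $\lambda_0$ explicitly (the paper just takes $\lambda$ ``small enough so that $\lambda f(x,\tilde{u})\leq 1$''), and for the final step into $\interior$ you argue by hand via $\partial u/\partial n\leq\partial\overline{u}_\lambda/\partial n<0$ after reaching $C^{1,\alpha}_0(\close)$, whereas the paper invokes Theorem~B1 of \cite{20-Saoudi-Ghanmi-2017} directly.
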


\begin{proof}
	Let $\lambda \in (0,1]$. Taking Propositions \ref{proposition_5} and \ref{proposition_6} into account, we define the Carath\'eodory function $\hat{g}_\lambda\colon\Omega\times\R\to\R$ by
	\begin{align}\label{24}
		\begin{split}
			\hat{g}_\lambda(x,s)=
			\begin{cases}
				\lambda \l[\overline{u}_\lambda^{-\eta(x)}+f\l(x,\overline{u}_\lambda(x)\r)\r] & \text{if }s<\overline{u}_\lambda(x),\\[1ex]
				\lambda \l[s^{-\eta(x)}+f\l(x,s\r)\r] & \text{if }\overline{u}_\lambda(x)\leq s \leq \tilde{u}(x),\\[1ex]
				\lambda \l[\tilde{u}^{-\eta(x)}+f\l(x,\tilde{u}(x)\r)\r] & \text{if }\tilde{u}(x)<s.
			\end{cases}
		\end{split}
	\end{align}
	
	We consider the following Dirichlet problem
	\begin{align}\label{25}
		-\Delta_{p(\cdot)} u - \Delta_{q(\cdot)} u = \hat{g}_\lambda(x,u)\quad \text{in }\Omega, \quad u\big|_{\partial\Omega}=0.
	\end{align}
	By using the direct method of the calculus of variations, we will produce a solution for problem \eqref{25} when $\lambda \in (0,1]$ is small enough. So, let $\hat{G}_\lambda(x,s)=\int^s_0\hat{g}_\lambda(x,t)\diff t$ and consider the $C^1$-functional $\hat{\ph}_\lambda\colon\W\to\R$ defined by
	\begin{align*}
		\hat{\ph}_\lambda(u)=\into \frac{1}{p(x)}|\nabla u|^{p(x)}\diff x+\into \frac{1}{q(x)}|\nabla u|^{q(x)}\diff x-\into \hat{G}_\lambda(x,u)\diff x
	\end{align*}
	for all $u\in\W$. From the definition of the truncation in \eqref{24} it is easy to see that
	\begin{align*}
		\hat{\ph}_\lambda(u)\geq \frac{1}{p_+} \l[\varrho_{p(\cdot)}(\nabla u)+\varrho_{q(\cdot)}(\nabla u)\r]-c_5
	\end{align*}
	for some $c_5>0$. Hence, $\hat{\ph}_\lambda\colon\W\to\R$ is coercive. Further $\hat{\ph}_\lambda\colon\W\to\R$ is sequentially weakly lower semicontinuous. Hence, there exists $u_\lambda\in\W$ such that
	\begin{align}\label{26}
		\hat{\ph}_\lambda(u_\lambda)=\min \l[\hat{\ph}_\lambda(u)\,:\,u\in \W\r].
	\end{align}
	
	Since $\tilde{u}\in\interior$, on account of hypothesis H$_1$(i) we can find $\lambda \in (0,1]$ small enough such that
	\begin{align}\label{27}
		\lambda f\l(x,\tilde{u}\r) \leq 1\quad\text{for a.\,a.\,}x\in\Omega.
	\end{align}

	From \eqref{26} we have $\hat{\ph}_\lambda'(u_\lambda)=0$, that is,
	\begin{align}\label{28}
		\l\lan A_{p(\cdot)}\l(u_\lambda\r),h\r\ran+\l\lan A_{q(\cdot)}\l(u_\lambda\r),h\r\ran=\into \hat{g}_\lambda(x,u_\lambda)h\diff x
	\end{align}
	for all $h\in\W$. First, we take $h=\l(\overline{u}_\lambda-u_\lambda\r)^+\in\W$ in \eqref{28}. Then, applying \eqref{24}, H$_1$(i) and Proposition \ref{5}, we obtain
	\begin{align*}
		&\l\lan A_{p(\cdot)}\l(u_\lambda\r),\l(\overline{u}_\lambda-u_\lambda\r)^+\r\ran+\l\lan A_{q(\cdot)}\l(u_\lambda\r),\l(\overline{u}_\lambda-u_\lambda\r)^+\r\ran\\
		& =\into \lambda \l[\overline{u}_\lambda^{-\eta(x)}+f\l(x,\overline{u}_\lambda\r)\r]\l(\overline{u}_\lambda-u_\lambda\r)^+\diff x\\
		& \geq\into \lambda \overline{u}_\lambda^{-\eta(x)}\l(\overline{u}_\lambda-u_\lambda\r)^+\diff x\\
		& = \l\lan A_{p(\cdot)}\l(\overline{u}_\lambda\r),\l(\overline{u}_\lambda-u_\lambda\r)^+\r\ran+\l\lan A_{q(\cdot)}\l(\overline{u}_\lambda\r),\l(\overline{u}_\lambda-u_\lambda\r)^+\r\ran.
	\end{align*}
	On account of Proposition \ref{proposition_2} we conclude that $\overline{u}_\lambda\leq u_\lambda$. Next, we choose $h=\l(u_\lambda-\tilde{u}\r)^+\in\W$ in \eqref{28}. Then, using \eqref{24}, \eqref{27} and Proposition \ref{proposition_6}, one has
	\begin{align*}
		&\l\lan A_{p(\cdot)}\l(u_\lambda\r),\l(u_\lambda-\tilde{u}\r)^+\r\ran+\l\lan A_{q(\cdot)}\l(u_\lambda\r),\l(u_\lambda-\tilde{u}\r)^+\r\ran\\
		& =\into \lambda \l[\tilde{u}^{-\eta(x)}+f\l(x,\tilde{u}\r)\r]\l(u_\lambda-\tilde{u}\r)^+\diff x\\
		& \leq \into \l[\tilde{u}^{-\eta(x)}+1\r]\l(u_\lambda-\tilde{u}\r)^+\diff x\\
		& = \l\lan A_{p(\cdot)}\l(\tilde{u}\r),\l(u_\lambda-\tilde{u}\r)^+\r\ran+\l\lan A_{q(\cdot)}\l(\tilde{u}\r),\l(u_\lambda-\tilde{u}\r)^+\r\ran.
	\end{align*}
	As before, from Proposition \ref{proposition_2} we see that $u_\lambda \leq \tilde{u}$. 
	
	In summary we have shown that $u_\lambda \in [\overline{u}_\lambda,\tilde{u}]$ for all $\lambda \in (0,1]$ small enough. From \eqref{24} and \eqref{28} we see that $u_\lambda$ is a solution of our original problem \eqref{problem}, that is, $u_\lambda \in\mathcal{S}_\lambda$. This proves the nonemptiness of $\mathcal{L}$.
	
	Let us now prove the second assertion of the proposition. To this end, let $u \in \mathcal{S}_\lambda$. Since $f \geq 0$ by hypothesis H$_1$(i), we have that $\overline{u}_\lambda\leq u$ and because $\overline{u}\in\interior$, there exists $c_6>0$ such that $c_6\hat{d} \leq u$, see Papageorgiou-R\u{a}dulescu-Repov\v{s} \cite[p.\,274]{13-Papageorgiou-Radulescu-Repovs-2019}. This fact, hypothesis H$_1$(i) and Theorem B1 of Saoudi-Ghanmi \cite{20-Saoudi-Ghanmi-2017} (see also Giacomoni-Schindler-Tak\'{a}\v{c} \cite{8-Giacomoni-Schindler-Takac-2007}), we have that $u\in\interior$. Therefore, $\mathcal{S}_\lambda\subseteq \interior$ for all $\lambda\in\mathcal{L}$.
\end{proof}

The next proposition shows that $\mathcal{L}$ is connected, that is, $\mathcal{L}$ is an interval.

\begin{proposition}
	If hypotheses H$_0$ and H$_1$ hold, $\lambda\in\mathcal{L}$ and $\mu \in (0,\lambda)$, then $\mu \in \mathcal{L}$.
\end{proposition}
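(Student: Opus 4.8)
The plan is to reproduce, almost verbatim, the argument used in Proposition \ref{proposition_7} for the nonemptiness of $\mathcal{L}$, but now with the solution at the \emph{larger} parameter $\lambda$ playing the role of the upper barrier $\tilde u$. Fix $u_\lambda\in\mathcal{S}_\lambda\subseteq\interior$ (which exists since $\lambda\in\mathcal{L}$, and lies in $\interior$ by Proposition \ref{proposition_7}), and let $\overline{u}_\mu\in\interior$ be the unique positive solution of the purely singular problem \eqref{problem_aux} at the parameter $\mu$, see Proposition \ref{proposition_5}.

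The first step is to observe that $u_\lambda$ is a supersolution of the reaction at level $\mu$. Since $u_\lambda>0$ in $\Omega$, $f\geq 0$ by H$_1$(i), and $\mu<\lambda$, we have
\[
-\Delta_{p(\cdot)}u_\lambda-\Delta_{q(\cdot)}u_\lambda=\lambda\l[u_\lambda^{-\eta(x)}+f(x,u_\lambda)\r]\geq \mu\, u_\lambda^{-\eta(x)}\quad\text{in }\Omega .
\]
Testing this inequality and the equation defining $\overline{u}_\mu$ with $(\overline{u}_\mu-u_\lambda)^+\in\W$ and using the strict monotonicity of $A_{p(\cdot)}+A_{q(\cdot)}$ (Proposition \ref{proposition_2}) together with the monotonicity of $s\mapsto s^{-\eta(x)}$, exactly as in the uniqueness argument of Proposition \ref{proposition_5}, gives $\overline{u}_\mu\leq u_\lambda$. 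Here the integrability of $\overline{u}_\mu^{-\eta(\cdot)}h$ and $u_\lambda^{-\eta(\cdot)}h$ against $h\in\W$ is guaranteed by the anisotropic Hardy inequality, as in Propositions \ref{proposition_5} and \ref{proposition_6}.

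Next, following \eqref{24}, introduce the \Cara truncation
\[
\hat{g}_\mu(x,s)=
\begin{cases}
\mu\l[\overline{u}_\mu^{-\eta(x)}+f(x,\overline{u}_\mu(x))\r] & \text{if }s<\overline{u}_\mu(x),\\[1ex]
\mu\l[s^{-\eta(x)}+f(x,s)\r] & \text{if }\overline{u}_\mu(x)\leq s\leq u_\lambda(x),\\[1ex]
\mu\l[u_\lambda^{-\eta(x)}+f(x,u_\lambda(x))\r] & \text{if }u_\lambda(x)<s,
\end{cases}
\]
and, with $\hat{G}_\mu(x,s)=\int_0^s\hat{g}_\mu(x,t)\diff t$, the $C^1$-functional
\[
\hat{\ph}_\mu(u)=\into\frac{1}{p(x)}|\nabla u|^{p(x)}\diff x+\into\frac{1}{q(x)}|\nabla u|^{q(x)}\diff x-\into\hat{G}_\mu(x,u)\diff x .
\]
Because the truncation forces $\hat{G}_\mu(x,\cdot)$ to grow at most linearly, $\hat{\ph}_\mu\colon\W\to\R$ is coercive and sequentially weakly lower semicontinuous, hence it attains a global minimum at some $u_\mu\in\W$, so that $\hat{\ph}_\mu'(u_\mu)=0$.

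The final step is to locate $u_\mu$ in the order interval $[\overline{u}_\mu,u_\lambda]$. Testing $\hat{\ph}_\mu'(u_\mu)=0$ with $(\overline{u}_\mu-u_\mu)^+\in\W$ and using that $\hat{g}_\mu(x,u_\mu)\geq\mu\,\overline{u}_\mu^{-\eta(x)}$ on the set $\{u_\mu<\overline{u}_\mu\}$ together with $-\Delta_{p(\cdot)}\overline{u}_\mu-\Delta_{q(\cdot)}\overline{u}_\mu=\mu\,\overline{u}_\mu^{-\eta(x)}$ yields $\overline{u}_\mu\leq u_\mu$; testing with $(u_\mu-u_\lambda)^+\in\W$ and using that $\hat{g}_\mu(x,u_\mu)=\mu[u_\lambda^{-\eta(x)}+f(x,u_\lambda)]\leq\lambda[u_\lambda^{-\eta(x)}+f(x,u_\lambda)]$ on the set $\{u_\mu>u_\lambda\}$ together with the equation for $u_\lambda$ yields $u_\mu\leq u_\lambda$, in both cases via Proposition \ref{proposition_2}. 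On $[\overline{u}_\mu,u_\lambda]$ the truncation $\hat{g}_\mu(x,\cdot)$ coincides with $s\mapsto\mu[s^{-\eta(x)}+f(x,s)]$, so $u_\mu$ is a positive solution of \eqref{problem} at the parameter $\mu$ (positivity being automatic from $u_\mu\geq\overline{u}_\mu>0$), hence $\mu\in\mathcal{L}$. I do not expect a genuine obstacle here: the only real point is the supersolution observation in the first step, which crucially uses $f\geq 0$ and the positivity of the singular term so that lowering the parameter turns $u_\lambda$ into a supersolution — in particular, unlike in Proposition \ref{proposition_7}, no smallness of $\mu$ is needed.
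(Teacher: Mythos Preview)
Your proposal is correct and follows essentially the same route as the paper: introduce the two-sided truncation between $\overline{u}_\mu$ and a solution $u_\lambda\in\mathcal{S}_\lambda$, minimize the associated coercive $C^1$-functional, and test the Euler equation with $(\overline{u}_\mu-u_\mu)^+$ and $(u_\mu-u_\lambda)^+$ to trap the minimizer in $[\overline{u}_\mu,u_\lambda]$. The only cosmetic difference is that the paper obtains the preliminary ordering $\overline{u}_\mu\leq u_\lambda$ by chaining $\overline{u}_\mu\leq\overline{u}_\lambda\leq u_\lambda$ via the monotonicity of $\lambda\mapsto\overline{u}_\lambda$ from Proposition~\ref{proposition_5}, whereas you establish it directly by a comparison argument; both are equally valid.
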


\begin{proof}
	Since $\lambda\in\mathcal{L}$, there exists $u \in \mathcal{S}_\lambda\subseteq \interior$, see Proposition \ref{proposition_7}. Moreover, from Proposition \ref{proposition_5} we know that
	\begin{align}\label{29}
		\overline{u}_\mu \leq \overline{u}_\lambda \leq u.
	\end{align}
	
	Based on \eqref{29} we introduce the Carath\'eodory function $g_\mu\colon \Omega \times \R \to\R$ defined by
	\begin{align}\label{30}
		g_\mu(x,s)=
		\begin{cases}
			\mu \l[\overline{u}_\mu(x)^{-\eta(x)}+f\l(x,\overline{u}_\mu(x)\r)\r]&\text{if } s<\overline{u}_\mu(x),\\
			\mu\l[s^{-\eta(x)}+f\l(x,s\r)\r]&\text{if } \overline{u}_\mu(x)\leq s \leq u(x),\\
			\mu\l[u(x)^{-\eta(x)}+f\l(x,u(x)\r)\r]&\text{if } u(x) <s.
		\end{cases}
	\end{align}
	We set $G_\mu(x,s)=\int^s_0g_\mu(x,t)\diff t$ and consider the $C^1$-functional $\ph_\mu\colon \Wpzero{p(\cdot)}\to\R$ defined by
	\begin{align*}
		\ph_\mu(u)
		=\into \frac{1}{p(x)}|\nabla u|^{p(x)}\diff x+\into \frac{1}{q(x)}|\nabla u|^{q(x)}\diff x-\into G_\mu(x,u)\diff x
	\end{align*}
	for all $u \in \Wpzero{p(\cdot)}$. It is clear that $\ph_\mu$ is coercive because of \eqref{30} and it is sequentially weakly lower semicontinuous. So, there exists $u_\mu \in \Wpzero{p(\cdot)}$ such that
	\begin{align*}
		\ph_\mu(u_\mu)=\min\l[\ph_\mu(u)\,:\, u \in \Wpzero{p(\cdot)}\r].
	\end{align*}
	This implies, in particular, that $\ph'_\mu(u_\mu)=0$. Hence
	\begin{align}\label{31}
		\l \lan A_{p(\cdot)}\l(u_\mu\r),h\r\ran+\l \lan A_{q(\cdot)}\l(u_\mu\r),h\r\ran=\into g_\mu \l(x,u_\mu\r)h\diff x
	\end{align}
	for all $h \in \Wpzero{p(\cdot)}$. We first choose $h=\l(\overline{u}_\mu-u_\mu\r)^+\in\Wpzero{p(\cdot)}$ in \eqref{31}. Applying \eqref{30}, hypothesis H$_1$(i) and Proposition \ref{proposition_5} yields
	\begin{align*}
		&\l \lan A_{p(\cdot)}\l(u_\mu\r),\l(\overline{u}_\mu-u_\mu\r)^+\r\ran+\l \lan A_{q(\cdot)}\l(u_\mu\r),\l(\overline{u}_\mu-u_\mu\r)^+\r\ran\\
		&=\into \mu \l[\overline{u}_\mu^{-\eta(x)}+f\l(x,\overline{u}_\mu\r)  \r]\l(\overline{u}_\mu-u_\mu\r)^+\diff x\\
		&\geq \into \mu \overline{u}_\mu^{-\eta(x)}\l(\overline{u}_\mu-u_\mu\r)^+\diff x\\
		&=\l \lan A_{p(\cdot)}\l(\overline{u}_\mu\r),\l(\overline{u}_\mu-u_\mu\r)^+\r\ran+\l \lan A_{q(\cdot)}\l(\overline{u}_\mu\r),\l(\overline{u}_\mu-u_\mu\r)^+\r\ran.
	\end{align*}
	Proposition \ref{proposition_2} then implies that $\overline{u}_\lambda \leq u_\mu$. Now  we choose $h=\l(u_\mu-u\r)^+\in\Wpzero{p(\cdot)}$ in \eqref{31}. Then, from \eqref{30}, $\mu<\lambda$ and $u\in\mathcal{S}_\lambda$, we derive
	\begin{align*}
		&\l \lan A_{p(\cdot)}\l(u_\mu\r),\l(u_\mu-u\r)^+\r\ran+\l \lan A_{q(\cdot)}\l(u_\mu\r),\l(u_\mu-u\r)^+\r\ran\\
		&=\into\mu \l[u^{-\eta(x)}+f\l(x,u_\lambda\r)\r]\l(u_\mu-u\r)^+\diff x\\
		& \leq \into \lambda \l[u^{-\eta(x)}+ f\l(x,u\r)\r]\l(u_\mu-u\r)^+\diff x\\
		& =\l \lan A_{p(\cdot)}\l(u \r),\l(u_\mu-u\r)^+\r\ran+\l \lan A_{q(\cdot)}\l(u\r),\l(u_\mu-u\r)^+\r\ran.
	\end{align*}
	Thus, $u_\mu \leq u$. Therefore we have proved that
	\begin{align}\label{32}
		u_\mu \in \l[\overline{u}_\mu,u\r].
	\end{align}
	From \eqref{32}, \eqref{30} and \eqref{31} it follows that
	\begin{align*}
		u_\mu \in \mathcal{S}_\mu \subseteq \interior
	\end{align*}
	and so $\mu \in \mathcal{L}$.
\end{proof}

An immediate consequence of the proof above is the following corollary.

\begin{corollary}\label{corollary_9}
	If hypotheses H$_0$ and H$_1$ hold and if $\lambda \in\mathcal{L}, u \in\mathcal{S}_\lambda \subseteq \interior$ and $0<\mu<\lambda$, then $\mu\in\mathcal{L}$ and there exists $u_\mu \in \mathcal{S}_\mu \subseteq \interior$ such that $u_\mu \leq u$.
\end{corollary}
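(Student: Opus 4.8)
The statement is essentially a re-packaging of the proposition immediately preceding it, so the plan is to read off what the proof of that proposition actually established. First I would observe that the preceding proof, although phrased as a statement about the \emph{existence} of a positive solution $u_\mu \in \mathcal{S}_\mu$ for $\mu \in (0,\lambda)$, in fact produces the solution $u_\mu$ by minimizing the functional $\ph_\mu$ built from the truncation $g_\mu$ defined in \eqref{30}, and this truncation is set up using the specific element $u \in \mathcal{S}_\lambda$ fixed at the start. Thus the solution obtained is not just \emph{some} element of $\mathcal{S}_\mu$, but one that is trapped in the order interval $[\overline{u}_\mu, u]$, as recorded in \eqref{32}. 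The corollary is just making this dependence on $u$ explicit.

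Concretely, the steps are: (i) invoke the previous proposition to get $\mu \in \mathcal{L}$; (ii) rerun its proof verbatim starting from the given $u \in \mathcal{S}_\lambda \subseteq \interior$ (the inclusion coming from Proposition \ref{proposition_7}), forming $g_\mu$ as in \eqref{30} and $\ph_\mu$, and obtaining by the direct method a minimizer $u_\mu$; (iii) quote the two comparison arguments from that proof — testing \eqref{31} with $(\overline{u}_\mu - u_\mu)^+$ and with $(u_\mu - u)^+$ and applying the $(\Ss)_+$/monotonicity Proposition \ref{proposition_2} — to conclude $u_\mu \in [\overline{u}_\mu, u]$, i.e. \eqref{32}; (iv) note that on this interval $g_\mu(x, u_\mu) = \mu[u_\mu^{-\eta(x)} + f(x,u_\mu)]$, so \eqref{31} says exactly that $u_\mu$ solves \eqref{problem} with parameter $\mu$, hence $u_\mu \in \mathcal{S}_\mu$, and by Proposition \ref{proposition_7} again $\mathcal{S}_\mu \subseteq \interior$; (v) conclude $u_\mu \leq u$ directly from $u_\mu \in [\overline{u}_\mu, u]$.

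There is essentially no obstacle here, which is exactly why the authors state it as an immediate corollary rather than giving a fresh proof — the only thing to be careful about is that the comparison from below delivers $\overline{u}_\lambda \leq u_\mu$ (note $\overline{u}_\mu \leq \overline{u}_\lambda$ by Proposition \ref{proposition_5}, consistent with the truncation at level $\overline{u}_\mu$), so that $u_\mu$ stays strictly positive on $\Omega$ and the singular term $u_\mu^{-\eta(\cdot)}$ makes sense; but this is already handled inside the proof being cited. One should simply write: ``The solution $u_\mu$ constructed in the proof of the preceding proposition belongs, by \eqref{32}, to the order interval $[\overline{u}_\mu, u]$; in particular $u_\mu \leq u$. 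Since $u_\mu \in \mathcal{S}_\mu$, Proposition \ref{proposition_7} gives $u_\mu \in \interior$.''
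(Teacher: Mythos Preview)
Your proposal is correct and matches the paper exactly: the authors give no separate proof but simply declare the corollary ``an immediate consequence of the proof above,'' and you have correctly identified that the solution $u_\mu$ constructed there lies in $[\overline{u}_\mu,u]$ by \eqref{32}, which is the content of the corollary. One small clarification: the comparison from below actually yields $\overline{u}_\mu \leq u_\mu$ (the line ``$\overline{u}_\lambda \leq u_\mu$'' in the paper is a typo, as \eqref{32} confirms), but this does not affect your argument since only the upper bound $u_\mu \leq u$ is needed.
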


We can improve the conclusion of this corollary.

\begin{proposition}\label{proposition_10}
	If hypotheses H$_0$ and H$_1$ hold and $\lambda \in\mathcal{L}, u \in\mathcal{S}_\lambda \subseteq \interior$ and $0<\mu<\lambda$, then $\mu\in\mathcal{L}$ and there exists $u_\mu \in \mathcal{S}_\mu \subseteq \interior$ such that
	\begin{align*}
		u-u_\mu \in \interior.
	\end{align*}
\end{proposition}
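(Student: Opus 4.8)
The idea is to upgrade the weak inequality $u_\mu \leq u$ from Corollary~\ref{corollary_9} to the strong ordering $u - u_\mu \in \interior$ by invoking the strong comparison principle of Proposition~\ref{proposition_3}. Concretely, I would first fix $\mu \in (0,\lambda)$ and $u \in \mathcal{S}_\lambda$, and produce a solution $u_\mu \in \mathcal{S}_\mu \cap [\overline{u}_\mu, u]$ exactly as in the proof of Corollary~\ref{corollary_9} (via the truncated functional $\ph_\mu$). Then both $u_\mu$ and $u$ lie in $\interior$ by Proposition~\ref{proposition_7}, so $0 < u_\mu(x) \le u(x)$ for all $x \in \Omega$, and both satisfy, with a suitable $\hat\xi_\rho \in \Linf_+$ chosen from hypothesis H$_1$(v) for $\rho = \|u\|_\infty$, equations of the form
\begin{align*}
	-\Delta_{p(\cdot)} u_\mu - \Delta_{q(\cdot)} u_\mu + \hat\xi_\rho(x) u_\mu^{p(x)-1} - u_\mu^{-\eta(x)} &= \mu f(x,u_\mu) + \hat\xi_\rho(x) u_\mu^{p(x)-1} + (\mu-1)u_\mu^{-\eta(x)} =: y_1(x),\\
	-\Delta_{p(\cdot)} u - \Delta_{q(\cdot)} u + \hat\xi_\rho(x) u^{p(x)-1} - u^{-\eta(x)} &= \lambda f(x,u) + \hat\xi_\rho(x) u^{p(x)-1} + (\lambda-1)u^{-\eta(x)} =: y_2(x).
\end{align*}
Both $y_1, y_2 \in \Linf$ since $u_\mu, u \in \interior$ are bounded away from zero on compacta and bounded above, and $\eta(\cdot) < 1$.

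The crux is to verify the ordering hypothesis $y_1 \preceq y_2$, i.e. that on every compact $K \subseteq \Omega$ there is $c_K > 0$ with $y_2(x) - y_1(x) \ge c_K$ a.e. on $K$. Writing out the difference,
\begin{align*}
	y_2 - y_1 = \lambda f(x,u) - \mu f(x,u_\mu) + \hat\xi_\rho(x)\l(u^{p(x)-1} - u_\mu^{p(x)-1}\r) + (\lambda-1)u^{-\eta(x)} - (\mu-1)u_\mu^{-\eta(x)}.
\end{align*}
By H$_1$(v), $f(x,\cdot) + \hat\xi_\rho(\cdot)^{p(\cdot)-1}(\cdot)$ is nondecreasing on $[0,\rho]$, and since $u_\mu \le u$ on $\Omega$ this forces $f(x,u) + \hat\xi_\rho(x) u^{p(x)-1} \ge f(x,u_\mu) + \hat\xi_\rho(x) u_\mu^{p(x)-1}$; combined with $\mu < \lambda$, $f \ge 0$, and $u_\mu \le u$ one can arrange the remaining terms. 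The key positive contribution comes from the singular terms together with $\mu < \lambda$: on a compact $K$, $u$ and $u_\mu$ are both $\ge c_K'>0$ and $\le \rho$, so $u^{-\eta(x)}$ and $u_\mu^{-\eta(x)}$ are comparable from above and below by constants depending only on $K$, and the strict inequality $\mu < \lambda$ in the coefficients $(\lambda-1)$ versus $(\mu-1)$ — more carefully, one writes $\lambda f(x,u) - \mu f(x,u_\mu) = \mu(f(x,u)-f(x,u_\mu)) + (\lambda-\mu)f(x,u)$ and, using H$_1$(iv) (which gives a uniform lower bound $m_\ell > 0$ on $f(x,\cdot)$ for arguments $\ge \ell$, applicable with $\ell = c_K'$), the term $(\lambda-\mu)f(x,u) \ge (\lambda-\mu)m_{c_K'} > 0$ supplies the gap. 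I would collect all the nonnegative pieces and retain $(\lambda-\mu)m_{c_K'}$ as the positive lower bound $c_K$.

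With $y_1 \preceq y_2$ established, neither $u_\mu$ nor $u$ vanishes identically, $u_\mu \in \Wp{p(\cdot)}$ is nonnegative and nontrivial, $u \in \interior$, and the two displayed equations hold; Proposition~\ref{proposition_3} applies verbatim with $(u,v) = (u_\mu, u)$ and yields $u - u_\mu \in \interior$. I expect the main obstacle to be the careful bookkeeping in the inequality $y_1 \preceq y_2$: one must be sure the monotonicity from H$_1$(v) controls the difference $f(x,u) - f(x,u_\mu)$ (which could a priori be negative without the shift by $\hat\xi_\rho s^{p(x)-1}$), and that absorbing the $\hat\xi_\rho$ terms does not destroy positivity — which is exactly why one compares the \emph{shifted} nonlinearities on both sides and lets the $(\lambda-\mu)f(x,u)$ term carry the strict gap on compacta, as permitted by the lower bound in H$_1$(iv).
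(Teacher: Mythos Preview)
Your plan is essentially the paper's: produce $u_\mu\in[\overline{u}_\mu,u]$ via Corollary~\ref{corollary_9}, then feed both $u_\mu$ and $u$ into the strong comparison principle (Proposition~\ref{proposition_3}), using the monotonicity shift from H$_1$(v) and extracting the strict gap on compacta from H$_1$(iv). The paper draws the gap from $(\lambda-\mu)f(\cdot,u_\mu)$ rather than $(\lambda-\mu)f(\cdot,u)$, but both work.

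There is, however, a real bookkeeping problem in your specific setup. You put coefficient $1$ on both the potential term $\hat\xi_\rho\,w^{p(x)-1}$ and the singular term $w^{-\eta(x)}$. This produces in $y_2-y_1$ the residual
\[
(\lambda-1)u^{-\eta(x)}-(\mu-1)u_\mu^{-\eta(x)},
\]
which need not be nonnegative: for example if $\mu>1$ and at some $x_0$ one has $u(x_0)\gg u_\mu(x_0)$, then $u^{-\eta(x_0)}\ll u_\mu^{-\eta(x_0)}$ and the expression is negative with no a priori bound that $(\lambda-\mu)f(x,u)$ could absorb. The same coefficient mismatch prevents you from pairing $\mu\bigl(f(x,u)-f(x,u_\mu)\bigr)$ cleanly with $\hat\xi_\rho\bigl(u^{p(x)-1}-u_\mu^{p(x)-1}\bigr)$ when $\mu>1$, since H$_1$(v) only gives $f(x,u)-f(x,u_\mu)\ge -\hat\xi_\rho\bigl(u^{p(x)-1}-u_\mu^{p(x)-1}\bigr)$.

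The paper avoids this by writing both equations with weight $\lambda$ on the potential \emph{and} on the singular term, i.e.\ in the form
\[
-\Delta_{p(\cdot)}w-\Delta_{q(\cdot)}w+\lambda\hat\xi_\rho\,w^{p(x)-1}-\lambda\,w^{-\eta(x)}=\text{RHS}.
\]
Then for $w=u$ the right-hand side is $\lambda\bigl[f(x,u)+\hat\xi_\rho u^{p(x)-1}\bigr]$ and for $w=u_\mu$ it is $\mu f(x,u_\mu)+\lambda\hat\xi_\rho u_\mu^{p(x)-1}-(\lambda-\mu)u_\mu^{-\eta(x)}$; their difference rearranges to
\[
\lambda\Bigl[\bigl(f(x,u)+\hat\xi_\rho u^{p(x)-1}\bigr)-\bigl(f(x,u_\mu)+\hat\xi_\rho u_\mu^{p(x)-1}\bigr)\Bigr]+(\lambda-\mu)f(x,u_\mu)+(\lambda-\mu)u_\mu^{-\eta(x)},
\]
every piece of which is manifestly $\ge 0$, the last two strictly positive on compacta. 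Proposition~\ref{proposition_3} (whose proof is unchanged when $-s^{-\eta(x)}$ is replaced by $-\lambda s^{-\eta(x)}$, still increasing) then gives $u-u_\mu\in\interior$. So your strategy is right; just take $\hat\xi=\lambda\hat\xi_\rho$ and carry the factor $\lambda$ on the singular term, and the ``remaining terms'' arrange themselves without any further effort.
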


\begin{proof}
	From Corollary \ref{corollary_9} we already know that $\mu \in \mathcal{L}$ and that we can find $u_\mu \in \mathcal{S}_\mu \subseteq \interior$ such that
	\begin{align}\label{33}
		\overline{u}_\mu\leq u_\mu \leq u.
	\end{align}
	
	Now, let $\rho=\|u\|_\infty$ and let $\hat{\xi}_\rho>0$ be as given in hypothesis H$_1$(v). Since $\mu<\lambda$, $u_\mu\in\mathcal{S}_\mu$ and due to \eqref{33}, hypothesis H$_1$(v) and $f \geq 0$, we have
	\begin{align}\label{34}
		\begin{split}
			&-\Delta_{p(\cdot)}u_\mu-\Delta_{q(\cdot)}u_\mu+\lambda\hat{\xi}_\rho u_\mu^{p(x)-1}-\lambda u_\mu^{-\eta(x)}\\
			& < -\Delta_{p(\cdot)}u_\mu-\Delta_{q(\cdot)}u_\mu+\lambda\hat{\xi}_\rho u_\mu^{p(x)-1}-\mu u_\mu^{-\eta(x)}\\
			&=\mu f \l(x,u_\mu\r)+\lambda \hat{\xi}_\rho u_\mu^{p(x)-1}\\
			& \leq \lambda \l[f \l(x,u_\mu\r)+\hat{\xi}_\rho u_\mu^{p(x)-1}\r]-(\lambda-\mu)f(x,u_\mu)\\
			& \leq \lambda  \l[ f \l(x,u\r)+\hat{\xi}_\rho u^{p(x)-1}\r]\\
			&=-\Delta_{p(\cdot)}u-\Delta_{q(\cdot)}u+\lambda\hat{\xi}_\rho u^{p(x)-1}-\lambda u^{-\eta(x)}.
		\end{split}
	\end{align}
	
	Since $u_\mu \in \interior$, using hypothesis H$_1$(iv), we see that
	\begin{align*}
		0 \preceq [\lambda -\mu]f(\cdot,u_\mu(\cdot)).
	\end{align*}
	Then, from \eqref{34} and Proposition \ref{proposition_3}, we conclude that
	\begin{align*}
		u-u_\mu \in \interior.
	\end{align*} 
\end{proof}

\begin{remark}
	In the same way as in the proof of Proposition \ref{proposition_10}, we can also show that
	\begin{align}\label{35}
		u_\mu-\overline{u}_\mu \in \interior.
	\end{align} 
\end{remark}

Let $\lambda^*=\sup \mathcal{L}$. The next proposition shows that $\lambda^*$ is finite.

\begin{proposition}
	If hypotheses H$_0$ and H$_1$ hold, then $\lambda^*<+\infty$.
\end{proposition}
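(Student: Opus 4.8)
The plan is to show that for sufficiently large $\lambda$, problem \eqref{problem} has no positive solution, exploiting the $(p_+-1)$-superlinearity of $f$ near $+\infty$ together with the sublinearity of the principal part. First I would use hypotheses H$_1$(ii) and H$_1$(iv) to produce a constant $\hat{c}>0$ and an exponent arrangement so that
\begin{align}\label{pf:minor}
	s^{-\eta(x)}+f(x,s)\geq \hat{c}\,s^{p_+-1}\quad\text{for a.\,a.\,}x\in\Omega\text{ and for all }s>0.
\end{align}
Indeed, H$_1$(iv) gives $f(x,s)\geq \tfrac{\hat\eta_1}{2}s^{q_+-1}$ for small $s>0$ and H$_1$(ii) gives $f(x,s)\geq s^{p_+-1}$ for large $s$; on the intermediate range $f\geq 0$ while $s^{-\eta(x)}$ is bounded below by a positive constant, and for small $s$ the singular term $s^{-\eta(x)}$ dominates any power $s^{p_+-1}$. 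Choosing $\hat c$ small enough, \eqref{pf:minor} holds globally (possibly after noting that for large $s$ one simply drops the singular term and keeps $f(x,s)\geq s^{p_+-1}$).

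Next I would bring in the principal eigenvalue $\hat\lambda_1(p_+)>0$ of $-\Delta_{p_+}$ on $\Wpzero{p_+}$ (a constant-exponent operator) with $L^\infty$-normalized positive eigenfunction, or more directly, test the weak formulation of a putative solution $u\in\mathcal S_\lambda\subseteq\interior$ against $u$ itself. From the equation,
\begin{align*}
	\varrho_{p(\cdot)}(\nabla u)+\varrho_{q(\cdot)}(\nabla u)=\lambda\into\l[u^{-\eta(x)}+f(x,u)\r]u\diff x\geq \lambda\hat c\into u^{p_+}\diff x,
\end{align*}
using \eqref{pf:minor}. On the other hand, since $p(x)\le p_+$ and $q(x)\le p_+$, the left-hand side is controlled: $\varrho_{p(\cdot)}(\nabla u)+\varrho_{q(\cdot)}(\nabla u)\le c_0\big(\|\nabla u\|_{p(\cdot)}^{p_-}+\|\nabla u\|_{p(\cdot)}^{p_+}+\|\nabla u\|_{q(\cdot)}^{q_-}+\|\nabla u\|_{q(\cdot)}^{q_+}\big)$ via Proposition~\ref{proposition_1}, while by the Sobolev/Poincar\'e inequalities $\into u^{p_+}\diff x=\|u\|_{p_+}^{p_+}\ge c_1\|\nabla u\|_{p(\cdot)}^{p_+}$ (the embedding $\Wpzero{p(\cdot)}\hookrightarrow L^{p_+}(\Omega)$ holds since $p_+<p^*(x)$, which follows from H$_0$; if necessary one passes through $L^{r(\cdot)}$ from H$_1$(i)). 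Combining these two estimates gives an inequality of the form $\Phi(\|u\|)\ge \lambda\hat c\, c_1\,\|u\|^{p_+}$ where $\Phi$ grows strictly slower than $s^{p_+}$ in one of the competing powers; a more careful bookkeeping, separating the cases $\|u\|\le 1$ and $\|u\|>1$, yields that $\|u\|$ is bounded above and below by positive constants independent of $\lambda$, forcing $\lambda$ itself to be bounded. Hence $\mathcal L$ is bounded above and $\lambda^*<+\infty$.

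The main obstacle is the inhomogeneity: the term $\varrho_{p(\cdot)}(\nabla u)$ is not a single power of $\|\nabla u\|_{p(\cdot)}$, so the scaling argument that works cleanly for constant exponents must be replaced by a careful case analysis on whether the relevant norms are $\le 1$ or $>1$; the decisive point is that \emph{all} exponents $p(x),q(x)$ appearing on the left are $\le p_+$, whereas the reaction, thanks to \eqref{pf:minor}, produces the full power $p_+$ on the right with a factor $\lambda$ in front, so large $\lambda$ is incompatible with the existence of any positive solution. An alternative, perhaps cleaner, route avoiding eigenvalues entirely is to fix $u\in\mathcal S_\lambda$, note $u\ge \overline u_\lambda\ge \overline u_1>0$ on compact subsets, and integrate the equation against a fixed positive test function $\varphi\in\interior$, using $-\Delta_{p(\cdot)}u-\Delta_{q(\cdot)}u=\lambda[u^{-\eta(x)}+f(x,u)]$ and \eqref{pf:minor} to get $\lambda\hat c\into u^{p_+-1}\varphi\diff x\le \lan A_{p(\cdot)}(u)+A_{q(\cdot)}(u),\varphi\ran$; bounding the right side via Young's inequality by something like $\varepsilon\into u^{p_+-1}\varphi+C_\varepsilon\into(|\nabla\varphi|^{p(x)}+|\nabla\varphi|^{q(x)})$ and absorbing, one again caps $\lambda$. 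I would present the first (testing with $u$) argument as the primary one.
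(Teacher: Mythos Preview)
Your primary argument contains a fatal gap: the inequality $\|u\|_{p_+}^{p_+}\ge c_1\|\nabla u\|_{p(\cdot)}^{p_+}$ is the \emph{reverse} of what Sobolev/Poincar\'e gives. The embedding $\Wpzero{p(\cdot)}\hookrightarrow L^{p_+}(\Omega)$ yields only $\|u\|_{p_+}\le C\|\nabla u\|_{p(\cdot)}$, and no reverse bound holds. Without it, testing with $u$ produces merely $\varrho_{p(\cdot)}(\nabla u)+\varrho_{q(\cdot)}(\nabla u)\ge \lambda\hat c\,\|u\|_{p_+}^{p_+}$, which places no ceiling on $\lambda$: the Rayleigh-type quotient on the left divided by $\|u\|_{p_+}^{p_+}$ is unbounded above over $\Wpzero{p(\cdot)}\setminus\{0\}$, so the inequality is compatible with arbitrarily large $\lambda$. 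Your alternative route (testing with a fixed $\varphi\in\interior$) has the same defect in disguise: Young's inequality applied to $|\nabla u|^{p(x)-1}|\nabla\varphi|$ yields a term $\varepsilon\into|\nabla u|^{p(x)}\diff x$, not $\varepsilon\into u^{p_+-1}\varphi\diff x$, and there is no mechanism to absorb gradient terms of $u$ into $\into u^{p_+-1}\varphi\diff x$.

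The paper's proof is entirely different and pointwise rather than integral. One first fixes $\hat\lambda>0$ so that $\hat\lambda f(x,s)\ge s^{p(x)-1}$ for a.a.\ $x\in\Omega$ and all $s\ge 0$ (this follows from H$_1$(i)--(iv)). Supposing some $\lambda>\hat\lambda$ lies in $\mathcal L$, take $u\in\mathcal S_\lambda\subseteq\interior$, choose a $C^2$ subdomain $\Omega_0$ with $\overline\Omega_0\subset\Omega$ on which $u$ is not constant, set $m_0=\min_{\overline\Omega_0}u>0$, and compare $u$ with the constant $m_0^\delta=m_0+\delta$ on $\Omega_0$. Using H$_1$(iv),(v) and the choice of $\hat\lambda$, one verifies the hypotheses of the strong comparison principle (Proposition~\ref{proposition_4}) between $m_0^\delta$ and $u$, concluding $u(x)>m_0+\delta$ for all $x\in\Omega_0$ and small $\delta>0$, which contradicts the definition of $m_0$. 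Hence $\lambda^*\le\hat\lambda<\infty$. The comparison-principle machinery already developed in Section~2 is what closes the argument; an energy estimate of the kind you attempt does not.
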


\begin{proof}
	From Hypotheses H$_1$(i)--(iv) we see that there exists $\hat{\lambda}>0$ large enough such that
	\begin{align}\label{36}
		\hat{\lambda}f(x,s)\geq s^{p(x)-1} \quad\text{for a.\,a.\,}x\in \Omega \text{ and for all }s\geq 0.
	\end{align}
	Let $\lambda>\hat{\lambda}$ and suppose that $\lambda \in \mathcal{L}$. Then we can find $u\in \mathcal{S}_\lambda\subseteq \interior$. Let $\Omega_0\subseteq \Omega$ be an open subset with $C^2$-boundary such that $\overline{\Omega}_0\subseteq \Omega$ and $u$ is not constant on $\close_0$. We define $m_0=\min_{x\in \overline{\Omega}_0}u(x)$. Since $u \in \interior$ it is clear that $m_0>0$. For $\delta \in (0,\|u\|_\infty-m_0)$ we set $m_0^\delta=m_0+\delta$. Further, for $\rho=\|u\|_\infty$ let $\hat{\xi}_\rho>0$ be as given by hypothesis H$_1$(v). First, for $\delta$ small enough, we observe that 
	\begin{align}\label{37}
		\frac{1}{m_0^{\eta(x)}}-\frac{1}{\l(m_0+\delta\r)^{\eta(x)}}
		= \frac{(m_0+\delta)^{\eta(x)}-m_0^{\eta(x)}}{\l[m_0(m_0+\delta)\r]^{\eta(x)}}
		\leq \l(\frac{\delta}{m_0^2}\r)^{\eta(x)} \leq \l(\frac{\delta}{m_0^2}\r)^{\eta_-}
	\end{align}
	for all $x \in \close$. Then, applying \eqref{37}, \eqref{36}, hypotheses H$_1$(iv), (v), $u\in\mathcal{S}_\lambda$ and $\delta>0$ small enough, we have
	\begin{align}\label{38}
		\begin{split}
			& -\Delta_{p(\cdot)}m_0^\delta-\Delta_{q(\cdot)}m_0^\delta+\l[\lambda \hat{\xi}_\rho+1\r] \l(m_0^\delta\r)^{p(x)}-\lambda \l(m_0^\delta\r)^{-\eta(x)}\\
			& \leq \l[\lambda \hat{\xi}_\rho+1\r] m_0^{p(x)-1}+\chi(\delta) \quad\text{with }\chi(\delta)\to 0^+ \text{ as }\delta\to 0^+,\\
			& \leq \hat{\lambda} f(x,m_0)+ \lambda \hat{\xi}_\rho m_0^{p(x)-1}+\chi(\delta)\\
			& = \lambda \l[f(x,m_0)+ \hat{\xi}_\rho m_0^{p(x)-1}\r]-\l(\lambda-\hat{\lambda}\r)f(x,m_0)+\chi(\delta)\\
			& \leq \lambda \l[f(x,m_0)+ \hat{\xi}_\rho m_0^{p(x)-1}\r]\\
			& \leq \lambda \l[f(x,u)+ \hat{\xi}_\rho u^{p(x)-1}\r]\\
			& =-\Delta_{p(\cdot)}u-\Delta_{q(\cdot)}u+\lambda\hat{\xi}_\rho u^{p(x)-1}-\lambda u^{-\eta(x)} \quad \text{in }\Omega_0.
		\end{split}	
	\end{align}
	
	For $\delta>0$ small enough, because of hypothesis H$_1$(iv), we know that
	\begin{align*}
		0<\tilde{\eta}_0 \leq \l[\lambda -\hat{\lambda}\r]f(x,m_0)-\chi(\delta)
.	\end{align*}
	Then, from \eqref{38} and Proposition \ref{proposition_4}, we infer that
	\begin{align*}
		0<u(x)-m_0^\delta\quad \text{for all }x\in \Omega \text{ and for all small }\delta>0.
	\end{align*}
	This is a contradiction to the definition of $m_0>0$. Therefore, $\lambda \not\in\mathcal{L}$ and so $\lambda^*\leq \hat{\lambda}<\infty$. 
\end{proof}

We have just proved that $(0,\lambda^*)\subseteq \mathcal{L}\subseteq (0,\lambda^*]$. Next we show that our original problem \eqref{problem} has at least two positive smooth solution for $\lambda \in (0,\lambda^*)$.

\begin{proposition}\label{proposition_12}
	If hypotheses H$_0$ and H$_1$ hold and if $\lambda \in (0,\lambda^*)$, then problem \eqref{problem} has at least two positive solutions
	\begin{align*}
		u_0, \hat{u} \in \interior \text{ with }u_0 \neq \hat{u}.
	\end{align*}
\end{proposition}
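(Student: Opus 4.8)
The plan is to produce the two solutions as a local minimizer $u_0$ of a truncated energy functional and a second critical point $\hat u\neq u_0$ obtained from the mountain pass theorem. Fix $\lambda\in(0,\lambda^*)$, choose $\nu\in(\lambda,\lambda^*)$ and — since $(0,\lambda^*)\subseteq\mathcal L$ — take $u_\nu\in\mathcal S_\nu\subseteq\interior$. Following the truncation idea from Propositions \ref{proposition_5}--\ref{proposition_7}, I would introduce the \Cara function
\begin{align*}
\hat g_\lambda(x,s)=
\begin{cases}
\lambda\l[\overline u_\lambda(x)^{-\eta(x)}+f(x,\overline u_\lambda(x))\r] & \text{if }s\leq\overline u_\lambda(x),\\
\lambda\l[s^{-\eta(x)}+f(x,s)\r] & \text{if }\overline u_\lambda(x)<s,
\end{cases}
\end{align*}
together with the variant $g_\lambda$ obtained by in addition replacing the argument by $u_\nu(x)$ when $s>u_\nu(x)$ (as in \eqref{24}, with $u_\nu$ in place of $\tilde u$); let $\hat G_\lambda,G_\lambda$ be their primitives in $s$ and $\hat\ph_\lambda,\ph_\lambda\colon\W\to\R$ the functionals with the usual anisotropic $(p,q)$-leading part $\into\frac{1}{p(x)}|\nabla u|^{p(x)}\diff x+\into\frac{1}{q(x)}|\nabla u|^{q(x)}\diff x$ and potential parts $-\into\hat G_\lambda(x,u)\diff x$, $-\into G_\lambda(x,u)\diff x$. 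The reason for truncating from below at $\overline u_\lambda\in\interior$ is that, by the anisotropic Hardy inequality (as in the proof of Proposition \ref{proposition_5}), the term $\overline u_\lambda^{-\eta(\cdot)}$ is integrable against elements of $\W$, so $\hat\ph_\lambda$ and $\ph_\lambda$ are genuinely of class $C^1$ — this is how one bypasses the singularity of \eqref{problem}. A test with $(\overline u_\lambda-u)^+$ gives $K_{\hat\ph_\lambda}\subseteq[\overline u_\lambda)\cap\interior=\mathcal S_\lambda$, and $\hat\ph_\lambda$, $\ph_\lambda$ coincide on the order interval $[\overline u_\lambda,u_\nu]$.

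To get $u_0$: the functional $\ph_\lambda$ is coercive (the upper truncation makes $G_\lambda(x,\cdot)$ at most linear) and sequentially weakly lower semicontinuous, hence has a global minimizer $u_0\in\W$. Testing $\ph_\lambda'(u_0)=0$ with $(\overline u_\lambda-u_0)^+$ and — using $\nu>\lambda$, $f\geq0$ and $u_\nu\in\mathcal S_\nu$ — with $(u_0-u_\nu)^+$, and using the monotonicity of $A_{p(\cdot)}+A_{q(\cdot)}$ (Proposition \ref{proposition_2}), one obtains $u_0\in[\overline u_\lambda,u_\nu]$, so $u_0$ solves \eqref{problem} and $u_0\in\mathcal S_\lambda\subseteq\interior$. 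Sharpening: arguing as for \eqref{35} (via Proposition \ref{proposition_3} with $\hat\xi=\lambda\hat\xi_\rho$, $\rho=\|u_\nu\|_\infty$, and H$_1$(iv),(v)) gives $u_0-\overline u_\lambda\in\interior$, and repeating the computation in the proof of Proposition \ref{proposition_10} for $u_0\in\mathcal S_\lambda$, $u_\nu\in\mathcal S_\nu$ with $\lambda<\nu$ gives $u_\nu-u_0\in\interior$. Hence $u_0\in\sideset{}{_{C^1_0(\close)}}\ints[\overline u_\lambda,u_\nu]$, so some $C^1_0(\close)$-neighbourhood of $u_0$ lies in $[\overline u_\lambda,u_\nu]$, on which $\hat\ph_\lambda=\ph_\lambda\geq\ph_\lambda(u_0)=\hat\ph_\lambda(u_0)$; thus $u_0$ is a local $C^1_0(\close)$-minimizer of $\hat\ph_\lambda$. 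By the anisotropic regularity theory and the Brezis--Nirenberg-type equivalence of $C^1_0(\close)$- and $\W$-local minimizers for singular problems of this type (cf.\ Giacomoni--Schindler--Tak\'{a}\v{c} \cite{8-Giacomoni-Schindler-Takac-2007}, Saoudi--Ghanmi \cite{20-Saoudi-Ghanmi-2017}), $u_0$ is a local minimizer of $\hat\ph_\lambda$ on $\W$.

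For the second solution I would apply the mountain pass theorem to $\hat\ph_\lambda$. We may assume $K_{\hat\ph_\lambda}$ is finite — otherwise \eqref{problem} has infinitely many positive solutions, all in $\interior$, and we are done — so $u_0$ is an isolated local minimizer and, once the C-condition is in hand, there is $\rho\in(0,1)$ with $\hat\ph_\lambda(u_0)<m_\rho:=\inf\{\hat\ph_\lambda(u):\|u-u_0\|=\rho\}$. Fixing $\hat u_*\in\interior$, hypothesis H$_1$(ii) together with $q_+<p_+$ yields $\hat\ph_\lambda(t\hat u_*)\to-\infty$ as $t\to+\infty$, so one can pick $t_*>0$ with $\|t_*\hat u_*-u_0\|>\rho$ and $\hat\ph_\lambda(t_*\hat u_*)<\hat\ph_\lambda(u_0)$. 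The remaining ingredient is the C-condition for $\hat\ph_\lambda$: for a Cerami sequence $\{u_n\}_{n\in\N}$, combining the boundedness of $p_+\hat\ph_\lambda(u_n)-\lan\hat\ph_\lambda'(u_n),u_n\ran$ with H$_1$(iii) bounds $\into u_n^{\mu(x)}\diff x$, and since $\mu(x)>(r_+-p_-)\max\{N/p_-,1\}$ an interpolation using the growth bound H$_1$(i) and the anisotropic Sobolev embedding forces $\{u_n\}_{n\in\N}$ to be bounded in $\W$; the $(\Ss)_+$-property of $A_{p(\cdot)}+A_{q(\cdot)}$ (Proposition \ref{proposition_2}) then gives a strongly convergent subsequence. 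The mountain pass theorem (see Papageorgiou--R\u{a}dulescu--Repov\v{s} \cite[Section 5.4]{13-Papageorgiou-Radulescu-Repovs-2019}) now produces $\hat u\in K_{\hat\ph_\lambda}$ with $\hat\ph_\lambda(\hat u)\geq m_\rho>\hat\ph_\lambda(u_0)$, whence $\hat u\neq u_0$ and $\hat u\in K_{\hat\ph_\lambda}=\mathcal S_\lambda\subseteq\interior$, so $u_0,\hat u\in\interior$ are two distinct positive solutions of \eqref{problem}. I expect the main obstacle to be exactly this C-condition step: extracting boundedness of a Cerami sequence from the AR-free condition H$_1$(iii) requires the delicate interpolation balancing the exponent $\mu(\cdot)$ against the growth exponent $r(\cdot)$ and the dimension $N$.
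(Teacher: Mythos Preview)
Your proposal is correct and follows essentially the same route as the paper: a one-sided truncation at $\overline u_\lambda$ producing the $C^1$-functional on which the mountain pass is run (your $\hat\ph_\lambda$, the paper's $\sigma_\lambda$), a two-sided truncation at $\overline u_\lambda$ and $u_\nu$ whose coercive functional (your $\ph_\lambda$, the paper's $\hat\sigma_\lambda$) yields the global minimizer $u_0\in[\overline u_\lambda,u_\nu]$, the sharpening $u_0\in\sideset{}{_{C^1_0(\close)}}\ints[\overline u_\lambda,u_\nu]$ via the strong comparison arguments of Proposition~\ref{proposition_10} and \eqref{35}, and then the $C^1_0$-versus-$\W$ local minimizer equivalence (the paper cites Tan--Fang \cite{21-Tan-Fang-2013} here) followed by the mountain pass. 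The only cosmetic difference is that the paper first invokes Proposition~\ref{proposition_10} to \emph{name} $u_0$ and afterwards identifies it with the minimizer of the coercive functional, whereas you produce the minimizer first and then establish the strict inequalities; the substance is identical, and your sketch of the C-condition via H$_1$(iii) and interpolation is exactly what the paper defers to \cite[Proposition~4.1]{7-Papageorgiou-Gasinski-2011}.
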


\begin{proof}
	Let $\vartheta \in (\lambda,\lambda^*)\subseteq \mathcal{L}$ and let $u_\vartheta \in \mathcal{S}_\vartheta\subseteq \interior$. From Proposition \ref{proposition_10} and \eqref{35} we know there exists $u_0 \in \mathcal{S}_\lambda \subseteq \interior$ such that
	\begin{align}\label{39}
		u_0 \in \sideset{}{_{C^1_0(\close)}} \ints [\overline{u}_\lambda,u_\vartheta].
	\end{align}
	
	We introduce the Carath\'eodory function $k_\lambda\colon \Omega\times\R\to\R$ defined by
	\begin{align}\label{40}
		k_\lambda(x,s)=
		\begin{cases}
			\lambda \l[\overline{u}_\lambda(x)^{-\eta(x)}+ f\l(x,\overline{u}_\lambda(x)\r)\r] &\text{if }s \leq \overline{u}_\lambda(x),\\
			\lambda \l[s^{-\eta(x)}+f\l(x,s\r)\r] &\text{if }\overline{u}_\lambda(x)<s.
		\end{cases}
	\end{align}

	We set $K_\lambda(x,s)=\int^s_0 k_\lambda(x,t)\diff t$  and consider the $C^1$-functional $\sigma_{\lambda}\colon \Wpzero{p(\cdot)}\to \R$ defined by
	\begin{align*}
		\sigma_\lambda(u)
		&=\into \frac{1}{p(x)}|\nabla u|^{p(x)}\diff x +\into \frac{1}{q(x)}|\nabla u|^{q(x)}\diff x-\into K_\lambda (x,u)\diff x
	\end{align*}
	for all $u \in \Wpzero{p(\cdot)}$.

	Using \eqref{40} we can easily show that
	\begin{align}\label{41}
		K_{\sigma_\lambda}\subseteq [\overline{u}_\lambda) \cap \interior.
	\end{align}
	Hence we may assume that
	\begin{align}\label{42}
		K_{\sigma_\lambda}\cap [\overline{u}_\lambda,u_\vartheta]=\{u_0\},
	\end{align}
	otherwise we already have a second positive smooth solution of \eqref{problem} and so we are done, see \eqref{41} and \eqref{40}.
	
	We truncate $k_\lambda(x,\cdot)$ at $u_\vartheta(x)$. This is done by the Carath\'eodory function $\hat{k}_\lambda\colon\Omega\times\R\to\R$ defined by
	\begin{align}\label{43}
		\hat{k}_\lambda(x,s)=
		\begin{cases} 
			k_\lambda(x,s) & \text{if } s \leq u_\vartheta(x),\\
			k_\lambda\l(x,u_\vartheta(x)\r) & \text{if } u_\vartheta(x)<s.
		\end{cases} 
	\end{align}
	We set $\hat{K}_\lambda(x,s)=\int^s_0 \hat{k}_\lambda(x,t)\diff t$ and consider the $C^1$-functional $\hat{\sigma}_\lambda\colon \Wpzero{p(\cdot)}\to \R$ defined by
	\begin{align*}
		\hat{\sigma}_\lambda(u)&=\into \frac{1}{p(x)}|\nabla u|^{p(x)}\diff x +\into \frac{1}{q(x)}|\nabla u|^{q(x)}\diff x-\into \hat{K}_\lambda (x,u)\diff x
	\end{align*}
	for all $u \in \Wpzero{p(\cdot)}$.
	
	Looking at \eqref{40} and \eqref{43} we see that
	\begin{align}\label{44}
		\hat{\sigma}_\lambda \big|_{[0,u_\vartheta]}=\sigma_\lambda \big|_{[0,u_\vartheta]}
		\quad\text{and}\quad
		\hat{\sigma}'_\lambda \big|_{[0,u_\vartheta]}=\sigma'_\lambda \big|_{[0,u_\vartheta]}.
	\end{align}
	Further, from \eqref{43} it is clear that
	\begin{align}\label{45}
		K_{\hat{\sigma}_\lambda}\subseteq [\overline{u}_\lambda,u_\vartheta]\cap \interior.
	\end{align}
	
	From the definition of the truncations in \eqref{40} and \eqref{43} we know that $\hat{\sigma}_\lambda$ is coercive and it is also sequentially weakly lower semicontinuous. Thus, we can find $\tilde{u}_0\in\Wpzero{p(\cdot)}$ such that
	\begin{align*}
		\hat{\sigma}_\lambda\l(\tilde{u}_0\r)=\min \l[\hat{\sigma}_\lambda(u)\,:\,u\in\Wpzero{p(\cdot)}\r].
	\end{align*}
	Taking \eqref{45}, \eqref{44}, \eqref{42} into account we conclude that $\tilde{u}_0=u_0$. Then, on account of \eqref{39} and \eqref{44}, $u_0 \in \interior$ is a local $C^1_0(\close)$-minimizer of $\sigma_\lambda$. The results of Tan-Fang \cite{21-Tan-Fang-2013} imply that 
	\begin{align}\label{46}
		u_0\in\interior \text{is a $\W$-minimizer of $\sigma_\lambda$.}
	\end{align}
	
	From \eqref{41} it is clear that we may assume that $K_{\sigma_\lambda}$ is finite otherwise we would have a sequence of distinct positive solutions of \eqref{problem} and so we would have done. The finiteness of $K_{\sigma_\lambda}$ along with \eqref{46} and Theorem 5.7.6 of Papageorgiou-R\u{a}dulescu-Repov\v{s} \cite[p.\,449]{13-Papageorgiou-Radulescu-Repovs-2019} imply that we can find $\hat{\rho} \in (0,1)$ small enough such that
	\begin{align}\label{47}
		\sigma_\lambda(u_0)<\inf \l[\sigma_\lambda(u)\,:\, \|u-u_0\|=\hat{\rho} \r]=m_\lambda.
	\end{align}
	
	Reasoning as in the proof of Proposition 4.1 of Gasi\'nski-Papageorgiou \cite{7-Papageorgiou-Gasinski-2011} we can show that
	\begin{align}\label{48}
		\sigma_\lambda \text{ satisfies the $C$-condition}.
	\end{align}
	
	Moreover, if $u \in \interior$, then on account of hypothesis H$_1$(ii) and \eqref{40}, we have 
	\begin{align}\label{49}
		\sigma_\lambda(tu)\to -\infty \quad\text{as }t\to +\infty.
	\end{align}

	Then, \eqref{47}, \eqref{48} and \eqref{49} permit us the use of the mountain pass theorem. Hence, there exists $\hat{u}\in\Wpzero{p(\cdot)}$ such that
	\begin{align*}
		\hat{u} \in K_{\sigma_\lambda} \subseteq [\overline{u}_\lambda)\cap \interior,
	\end{align*}
	see \eqref{41}, and 
	\begin{align*}
		m_\lambda\leq \sigma_\lambda \l(\hat{u}\r),
	\end{align*}
	see \eqref{47}. Taking \eqref{40} and \eqref{47} into account we conclude that $\hat{u} \in \interior$ is  a solution of \eqref{problem} for $\lambda \in \l(0,\lambda^*\r)$ with $\hat{u}\neq u_0$.
\end{proof}

Next we will check the admissibility of the critical parameter $\lambda^*>0$.

\begin{proposition}
	If hypotheses H$_0$ and H$_1$ hold, then $\lambda^* \in \mathcal{L}$, that is, $\mathcal{L}=(0,\lambda^*]$.
\end{proposition}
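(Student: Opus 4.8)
The plan is to exhaust the critical parameter from below by admissible ones and then pass to the limit. I would start from a sequence $\lambda_n\uparrow\lambda^*$ with $\lambda_n\in(0,\lambda^*)\subseteq\mathcal{L}$ and, for each $n\in\N$, pick $u_n\in\mathcal{S}_{\lambda_n}\subseteq\interior$. As in the proof of Proposition \ref{proposition_7} --- where $f\geq 0$ together with the weak comparison principle against \eqref{problem_aux} give $\overline{u}_\lambda\leq u$ for every $u\in\mathcal{S}_\lambda$ --- combined with the monotonicity of $\lambda\mapsto\overline{u}_\lambda$ from Proposition \ref{proposition_5}, one has $\overline{u}_{\lambda_1}\leq\overline{u}_{\lambda_n}\leq u_n$ for all $n\in\N$. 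In particular the singular term is uniformly dominated, $0\leq u_n^{-\eta(x)}\leq\overline{u}_{\lambda_1}^{-\eta(x)}$, and by the anisotropic Hardy inequality $\overline{u}_{\lambda_1}^{-\eta(\cdot)}\,|h|\in\Lp{1}$ for every $h\in\W$.

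The decisive step is to show that $\{u_n\}_{n\in\N}$ is bounded in $\W$. Since $u_n\geq\overline{u}_{\lambda_n}$, the function $u_n$ is a critical point of the $C^1$-functional $\sigma_{\lambda_n}$ from the proof of Proposition \ref{proposition_12}, i.e.\ $u_n\in K_{\sigma_{\lambda_n}}$, so I would obtain the bound by repeating the argument that verifies the $C$-condition for $\sigma_\lambda$, see \eqref{48} and Gasi\'nski-Papageorgiou \cite{7-Papageorgiou-Gasinski-2011}. Arguing by contradiction, suppose $\|u_n\|\to+\infty$ along a subsequence and put $y_n=u_n/\|u_n\|$, so that $\|y_n\|=1$ and, along a further subsequence, $y_n\weak y$ in $\W$, $y_n\to y$ in $\LP$ and in $\Lp{r(\cdot)}$, $y_n(x)\to y(x)$ for a.\,a.\,$x\in\Omega$, with $y\geq 0$. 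If $y\neq 0$, then $u_n(x)\to+\infty$ on the set $\{y>0\}$ of positive measure; testing \eqref{problem} (with $\lambda_n$) by $u_n$ gives $\varrho_{p(\cdot)}(\nabla u_n)+\varrho_{q(\cdot)}(\nabla u_n)=\lambda_n\into[u_n^{1-\eta(x)}+f(x,u_n)u_n]\diff x$, and since $\into u_n^{1-\eta(x)}\diff x\leq c_0(1+\|u_n\|)$ and hypothesis H$_1$(iii) yields $p_+F(x,s)\leq f(x,s)s+c_1$ for a.\,a.\,$x\in\Omega$ and all $s\geq 0$, one bounds $\into F(x,u_n)\diff x$ above by a fixed multiple of $\|u_n\|^{p_+}$; dividing by $\|u_n\|^{p_+}$ and using Fatou's lemma and hypothesis H$_1$(ii) forces $\into F(x,u_n)\diff x/\|u_n\|^{p_+}\to+\infty$, a contradiction. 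The case $y=0$ is the genuinely delicate one: here one maximizes $t\mapsto\sigma_{\lambda_n}(tu_n)$ over $[0,1]$, shows that this maximum tends to $+\infty$ (evaluating at the points $ky_n$ and using $\varrho_{p(\cdot)}(k\nabla y_n)\geq k^{p_-}$ for $k\geq 1$ together with $\into F(x,ky_n)\diff x\to 0$, which follows from H$_1$(i) and $y_n\to 0$ in $\Lp{r(\cdot)}$), and then derives a contradiction from the stationarity condition at an interior maximizer together with hypothesis H$_1$(iii). This step, which genuinely exploits the superlinear non-AR structure of $f$, is the main obstacle.

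Once $\{u_n\}_{n\in\N}\subseteq\W$ is known to be bounded, I would feed the uniform bound on the singular term and hypothesis H$_1$(i) into the anisotropic regularity bootstrap exactly as in the proof of Proposition \ref{proposition_5}: Lemma A.5 of Saoudi-Ghanmi \cite{20-Saoudi-Ghanmi-2017} gives that $\{u_n\}_{n\in\N}\subseteq\Linf$ is bounded, and then Lemma 3.3 of Fukagai-Narukawa \cite{6-Fukagai-Narukawa-2007} produces $\alpha\in(0,1)$ and $c>0$ with $u_n\in C^{1,\alpha}_0(\close)$ and $\|u_n\|_{C^{1,\alpha}_0(\close)}\leq c$ for all $n$. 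By the compact embedding $C^{1,\alpha}_0(\close)\hookrightarrow C^1_0(\close)$ one may assume $u_n\to u_*$ in $C^1_0(\close)$, and since $u_*\geq\overline{u}_{\lambda_1}$ it follows that $u_*\in\interior$.

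Finally, I would pass to the limit in
\begin{align*}
	\l\lan A_{p(\cdot)}(u_n),h\r\ran+\l\lan A_{q(\cdot)}(u_n),h\r\ran=\lambda_n\into\l[u_n^{-\eta(x)}+f(x,u_n)\r]h\diff x,\qquad h\in\W.
\end{align*}
By Proposition \ref{proposition_2} and $u_n\to u_*$ in $C^1_0(\close)\hookrightarrow\W$ one has $A_{p(\cdot)}(u_n)\to A_{p(\cdot)}(u_*)$ and $A_{q(\cdot)}(u_n)\to A_{q(\cdot)}(u_*)$; by hypothesis H$_1$(i) and the uniform $C^1_0(\close)$-bound the functions $\lambda_nf(\cdot,u_n)$ are uniformly bounded and converge to $\lambda^*f(\cdot,u_*)$ a.\,e., so $\lambda_n\into f(x,u_n)h\diff x\to\lambda^*\into f(x,u_*)h\diff x$; and since $u_n^{-\eta(x)}h\to u_*^{-\eta(x)}h$ a.\,e.\ with $|u_n^{-\eta(x)}h|\leq\overline{u}_{\lambda_1}^{-\eta(x)}|h|\in\Lp{1}$, Vitali's theorem (as in \eqref{22}) gives $\lambda_n\into u_n^{-\eta(x)}h\diff x\to\lambda^*\into u_*^{-\eta(x)}h\diff x$. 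Hence $u_*\in\interior$ solves \eqref{problem} for $\lambda=\lambda^*$, i.e.\ $u_*\in\mathcal{S}_{\lambda^*}$, so $\lambda^*\in\mathcal{L}$; combined with $(0,\lambda^*)\subseteq\mathcal{L}\subseteq(0,\lambda^*]$ this yields $\mathcal{L}=(0,\lambda^*]$.
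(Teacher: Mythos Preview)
Your overall plan is the right one, and the limiting step via $C^{1,\alpha}_0(\close)$-compactness is a perfectly valid variant of the paper's $(\Ss)_+$-based passage to the limit. The genuine gap is in the boundedness step.

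You pick \emph{arbitrary} $u_n\in\mathcal{S}_{\lambda_n}$ and then appeal to ``the argument that verifies the $C$-condition for $\sigma_\lambda$''. But that argument needs two ingredients: $\sigma'_{\lambda_n}(u_n)=0$ \emph{and} an a priori bound on $\sigma_{\lambda_n}(u_n)$. You have the first, not the second. In your $y=0$ case this shows up concretely: you establish $M_n:=\max_{t\in[0,1]}\sigma_{\lambda_n}(tu_n)\to+\infty$ and the stationarity condition $\l\lan\sigma'_{\lambda_n}(t_nu_n),t_nu_n\r\ran=0$, and then write that a contradiction follows from H$_1$(iii). It does not. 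With $v_n=t_nu_n$ one computes
\[
M_n=\sigma_{\lambda_n}(v_n)-\tfrac{1}{p_+}\l\lan\sigma'_{\lambda_n}(v_n),v_n\r\ran
=\into\!\Big(\tfrac{1}{p(x)}-\tfrac{1}{p_+}\Big)|\nabla v_n|^{p(x)}\diff x
+\into\!\Big(\tfrac{1}{q(x)}-\tfrac{1}{p_+}\Big)|\nabla v_n|^{q(x)}\diff x
+\tfrac{1}{p_+}\!\into\!\big[k_{\lambda_n}v_n-p_+K_{\lambda_n}\big]\diff x,
\]
and every term on the right is bounded \emph{below} (the last one by H$_1$(iii)), not above; so $M_n\to+\infty$ is perfectly consistent with this identity. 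In the standard Jeanjean-type scheme the energy bound $\sigma_{\lambda_n}(u_n)\leq M$ is what forces the maximizer to be interior \emph{and} what, combined with $\sigma'_{\lambda_n}(u_n)=0$, yields the crucial estimate $\into u_n^{\mu(x)}\diff x\leq C$ that closes the argument. Without it there is no contradiction.

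The paper repairs exactly this point by not taking arbitrary solutions: it chooses $u_n$ to be the local minimizer $u_0$ produced in the proof of Proposition~\ref{proposition_12}, for which one has the energy bound $\sigma_{\lambda_n}(u_n)\leq\sigma_{\lambda_n}(\overline{u}_1)\leq 0$ (the last inequality coming from \eqref{40} and $f\geq 0$). With this uniform energy bound in hand, the Cerami-type estimate of Gasi\'nski--Papageorgiou \cite{7-Papageorgiou-Gasinski-2011} does give $\{u_n\}_{n\in\N}$ bounded in $\W$, and the rest of your argument (or the paper's $(\Ss)_+$ route) goes through. So the fix is simple: do not pick $u_n\in\mathcal{S}_{\lambda_n}$ arbitrarily, but take the specific minimizing solutions coming from Proposition~\ref{proposition_12}.
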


\begin{proof}
	Let $\{\lambda_n\}_{n\in\N} \subseteq (0,\lambda^*)\subseteq \mathcal{L}$ be such that $\lambda_n\nearrow \lambda^*$ as $n \to \infty$. Let $\overline{u}_1=\overline{u}_{\lambda_1}\in\interior$ be the unique solution of \eqref{problem_aux} for $\lambda=\lambda_1$ obtained in Proposition \ref{proposition_5}. By hypothesis H$_1$(i) we know that $f \geq 0$. Then from \eqref{40} we get that $\sigma_{\lambda_1}(\overline{u}_1)\leq 0$. Hence,
	\begin{align}\label{50}
		\sigma_{\lambda_n}\l(\overline{u}_1\r) \leq 0 \quad \text{for all }n\in\N,
	\end{align}
	since $\lambda_1\leq \lambda_n$ for all $n \in \N$.
	
	From the proof of Proposition \ref{proposition_12} we know there exists $u_n \in \mathcal{S}_{\lambda_n} \subseteq \interior$ such that $\overline{u}_1 \leq u_n$ and 
	\begin{align}\label{51}
		\sigma_{\lambda_n}(u_n) \leq \sigma_{\lambda_n}(\overline{u}_1)\leq 0 \quad\text{for all }n\in\N,
	\end{align}
	see \eqref{50}. Since $u_n \in \mathcal{S}_{\lambda_n}$ it holds
	\begin{align}\label{52}
		\sigma'_{\lambda_n}(u_n)=0\quad\text{for all }n\in\N.
	\end{align}

	From \eqref{51}, \eqref{52} and Proposition 4.1 of Gasi\'nski-Papageorgiou \cite{7-Papageorgiou-Gasinski-2011} we can conclude that $\{u_n\}_{n\in\N}\subseteq \W$ is bounded. So, we may assume that
	\begin{align}\label{53}
		u_n\weak u_* \quad\text{in }\W\quad\text{and}\quad u_n\to u_* \quad\text{in }\Lp{r(\cdot)}.
	\end{align}
	
	From \eqref{52} we have
	\begin{align}\label{54}
		\l \lan A_{p(\cdot)}\l(u_n\r),h\r\ran+\l \lan A_{q(\cdot)}\l(u_n\r),h\r\ran=\into k_\lambda \l(x,u_n\r)h\diff x
	\end{align}
	for all $h \in \Wpzero{p(\cdot)}$ and for all $n \in \N$.
	
	We take $h=u_n-u_*\in\W$ as test function \eqref{54}. Applying \eqref{53} and hypothesis H$_1$(i) gives
	\begin{align*}
		\lim_{n\to\infty} \l[\l\lan A_{p(\cdot)}(u_n),u_n-u_*\r\ran+\l\lan A_{q(\cdot)}(u_n),u_n-u_*\r\ran\r]=0.
	\end{align*}	
	Since $A_{q(\cdot)}$ is monotone, see Proposition \ref{proposition_2}, we obtain
	\begin{align*}
		\limsup_{n\to\infty} \l[\l\lan A_{p(\cdot)}(u_n),u_n-u_*\r\ran+\l\lan A_{q(\cdot)}(u_*),u_n-u_*\r\ran\r]\leq 0.
	\end{align*}
	Then, by using \eqref{53}, it follows
	\begin{align*}
		\limsup_{n\to\infty} \l\lan A_{p(\cdot)}(u_n),u_n-u_*\r\ran\leq 0.
	\end{align*}
	From this and Proposition \ref{proposition_2} we conclude that 
	\begin{align}\label{55}
		u_n\to u_* \quad\text{in }\W\quad\text{and}\quad \overline{u}_1 \leq u_*.
	\end{align}
	If we now pass to the limit in \eqref{54} as $n\to \infty$, then, by applying \eqref{55}, we see that $u_* \in \mathcal{S}_{\lambda^*}$ and so $\lambda^*\in\mathcal{L}$, that is, $\mathcal{L}=(0,\lambda^*]$.
\end{proof}

In summary, we can state the following bifurcation-type result concerning problem \eqref{problem}.

\begin{theorem}
	If hypotheses H$_0$ and H$_1$ hold, then there exists $\lambda^*>0$ such that
	\begin{enumerate}
		\item[(a)]
			for every $\lambda\in (0,\lambda^*)$, problem \eqref{problem} has at least two positive solutions
		\begin{align*}
		u_0, \hat{u} \in \interior, \quad u_0\neq \hat{u};
		\end{align*}
		\item[(b)]
		for $\lambda=\lambda^*$, problem \eqref{problem} has at least one positive solution
		\begin{align*}
		u_*\in\interior;
		\end{align*}
		\item[(c)]
		for every $\lambda>\lambda^*$, problem \eqref{problem} has no positive solutions.
	\end{enumerate}
\end{theorem}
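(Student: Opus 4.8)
The plan is to assemble the theorem directly from the propositions established in this section, since each of the three items has essentially already been proved. First I would set $\lambda^*=\sup\mathcal{L}$ and recall that by Proposition~\ref{proposition_7} the set $\mathcal{L}$ is nonempty, by Corollary~\ref{corollary_9} (equivalently the connectedness proposition) it is downward closed in $\overset{\circ}{\R}_+$, and by the finiteness proposition we have $\lambda^*<+\infty$; combining these gives the inclusion $(0,\lambda^*)\subseteq\mathcal{L}\subseteq(0,\lambda^*]$, which is already recorded in the text just before Proposition~\ref{proposition_12}.

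For item (a), I would simply invoke Proposition~\ref{proposition_12}: for every $\lambda\in(0,\lambda^*)$ that result produces two distinct positive solutions $u_0,\hat{u}\in\interior$ with $u_0\neq\hat u$, using that such $\lambda$ lies strictly below $\lambda^*$ and hence one can pick $\vartheta\in(\lambda,\lambda^*)\subseteq\mathcal{L}$. Nothing further is needed here beyond quoting the statement.

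For item (b), I would invoke the admissibility-of-$\lambda^*$ proposition, which shows $\lambda^*\in\mathcal{L}$, i.e.\ $\mathcal{L}=(0,\lambda^*]$; by the definition of $\mathcal{S}_{\lambda^*}$ and Proposition~\ref{proposition_7} (which gives $\mathcal{S}_\lambda\subseteq\interior$ for every $\lambda\in\mathcal{L}$), the solution $u_*$ furnished there lies in $\interior$. For item (c), I would argue by contradiction: if $\lambda>\lambda^*$ and problem~\eqref{problem} had a positive solution, then $\lambda\in\mathcal{L}$, contradicting $\lambda^*=\sup\mathcal{L}$; hence $\mathcal{S}_\lambda=\emptyset$. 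Since every ingredient is already available, I do not expect a genuine obstacle; the only point requiring a little care is making explicit that the regularity conclusion $u\in\interior$ in (a) and (b) is exactly the content of Proposition~\ref{proposition_7}, so that the three items match the notation used in the statement.
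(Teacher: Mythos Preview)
Your proposal is correct and matches the paper's approach exactly: the theorem is stated there as a summary (``In summary, we can state the following bifurcation-type result'') without a separate proof, precisely because items (a), (b), (c) are direct restatements of Proposition~\ref{proposition_12}, the admissibility-of-$\lambda^*$ proposition, and the definition of $\lambda^*=\sup\mathcal{L}$ together with the finiteness proposition, respectively. Your explicit remark that the regularity $u\in\interior$ comes from Proposition~\ref{proposition_7} is a nice touch that the paper leaves implicit.
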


\section{Minimal positive solutions}

In this section we are going to show that for every admissible parameter $\lambda\in\mathcal{L}=(0,\lambda^*]$, problem \eqref{problem} has a smallest positive solution (so-called minimal positive solution) $\tilde{u}_\lambda\in\mathcal{S}_\lambda\subseteq \interior$, that is, $\tilde{u}_\lambda \leq u$ for all $u \in \mathcal{S}_\lambda$. Moreover, we determine the monotonicity and continuity properties of the minimal solution map $\mathcal{L}\ni\lambda \to \tilde{u}_\lambda\in \interior$.

\begin{proposition}
	If hypotheses H$_0$ and H$_1$ hold and if $\lambda \in\mathcal{L}\in (0,\lambda^*]$, then problem \eqref{problem} has a smallest positive solution $\tilde{u}_\lambda\in\interior$.
\end{proposition}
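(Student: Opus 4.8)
The plan is to follow the classical two‑step scheme for minimal solutions: first show that $\mathcal{S}_\lambda$ is downward directed, then extract a decreasing minimizing sequence and pass to the limit with the compactness and regularity tools already developed. Recall from Propositions \ref{proposition_5} and \ref{proposition_7} that every $u\in\mathcal{S}_\lambda$ satisfies $\overline{u}_\lambda\le u$ and $u\in\interior$; in particular $\overline{u}_\lambda$ is a uniform lower barrier, and only the ``harmless'' part $\overline{u}_\lambda^{-\eta(\cdot)}$ of the singular term will enter the a priori estimates.

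\emph{Step 1: $\mathcal{S}_\lambda$ is downward directed.} Given $u_1,u_2\in\mathcal{S}_\lambda$, put $m=\min\{u_1,u_2\}\in\W$, so $\overline{u}_\lambda\le m$. I would introduce a truncation of the reaction of \eqref{problem} cut from below at $\overline{u}_\lambda$ and from above at $m$, in the spirit of $\hat g_\lambda$ in \eqref{24}, with the value frozen above $m$ chosen so that $m$ becomes a supersolution of the truncated problem (exploiting that $m$ is the pointwise minimum of the two solutions $u_1,u_2$). The associated $C^1$-energy functional on $\W$ is coercive and sequentially weakly lower semicontinuous, hence attains its infimum at some $u_0\in\W$. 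Testing the Euler equation of $u_0$ with $(\overline{u}_\lambda-u_0)^+$ and with $(u_0-m)^+$ and invoking Proposition \ref{proposition_2} — together with the fact that $\overline{u}_\lambda$ solves \eqref{problem_aux} and $u_1,u_2$ solve \eqref{problem} — gives $\overline{u}_\lambda\le u_0\le m$. On $[\overline{u}_\lambda,m]$ the truncated reaction equals the genuine one, so $u_0\in\mathcal{S}_\lambda$ with $u_0\le u_1$ and $u_0\le u_2$.

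\emph{Step 2: minimizing chain and passage to the limit.} Since $\mathcal{S}_\lambda$ is downward directed, there is a decreasing sequence $\{u_n\}_{n\in\N}\subseteq\mathcal{S}_\lambda$ with $\tilde u_\lambda:=\inf_{n}u_n=\inf\mathcal{S}_\lambda$ pointwise, by the standard selection result for downward directed sets in ordered Banach spaces (see Papageorgiou-R\u{a}dulescu-Repov\v{s} \cite{13-Papageorgiou-Radulescu-Repovs-2019}). As $\overline{u}_\lambda\le u_n\le u_1$, testing the equation for $u_n$ with $u_n$ and using the anisotropic Hardy inequality, the bound $c\,\hat d\le\overline{u}_\lambda$ and hypothesis H$_1$(i) exactly as in the proof of Proposition \ref{proposition_5} shows $\{u_n\}_{n\in\N}\subseteq\W$ is bounded; then Lemma A.5 of Saoudi-Ghanmi \cite{20-Saoudi-Ghanmi-2017} and Lemma 3.3 of Fukagai-Narukawa \cite{6-Fukagai-Narukawa-2007} give a uniform bound in $C^{1,\alpha}_0(\close)$, so along a subsequence $u_n\to\tilde u_\lambda$ in $C^1_0(\close)$. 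Passing to the limit in $\lan A_{p(\cdot)}(u_n),h\ran+\lan A_{q(\cdot)}(u_n),h\ran=\lambda\into [u_n^{-\eta(x)}+f(x,u_n)]h\diff x$, with the singular term handled by Vitali's theorem and the domination $0\le u_n^{-\eta(x)}|h|\le\overline{u}_\lambda^{-\eta(x)}|h|\in\Lp{1}$ for $h\in\W$ as in \eqref{22}, shows $\tilde u_\lambda\in\mathcal{S}_\lambda$. Finally $\overline{u}_\lambda\le\tilde u_\lambda$ with $\overline{u}_\lambda\in\interior$ forces $\tilde u_\lambda\in\interior$, and $\tilde u_\lambda=\inf\mathcal{S}_\lambda\le u$ for every $u\in\mathcal{S}_\lambda$, so $\tilde u_\lambda$ is the minimal positive solution.

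\emph{Main obstacle.} The delicate step is the downward directedness: one has to engineer the double truncation so that the minimizer is squeezed inside $[\overline{u}_\lambda,m]$, which hinges on the pointwise minimum of two solutions of \eqref{problem} being a supersolution of the truncated problem. Because the singular nonlinearity $s\mapsto s^{-\eta(x)}$ is decreasing in $s$, this comparison must be set up with care — the frozen value of the reaction above $m$ has to be chosen consistently with both branches $u_1$ and $u_2$. By contrast, Step 2 is essentially a re‑run of the compactness and limiting arguments already carried out in Propositions \ref{proposition_5} and \ref{proposition_6}.
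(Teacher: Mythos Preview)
Your proposal is correct and follows the same two-step scheme as the paper: establish that $\mathcal{S}_\lambda$ is downward directed, extract a decreasing minimizing sequence, and pass to the limit. For Step~1 the paper simply cites Proposition~18 of \cite{16-Papageorgiou-Radulescu-Repovs-2017} and then Lemma~3.10 of Hu--Papageorgiou \cite{11-Hu-Papageorgiou-1997} for the selection of the sequence, so your explicit truncation sketch is more self-contained; you are right that the supersolution property of $m=\min\{u_1,u_2\}$ for the truncated problem is the only point requiring care, and it is handled in the cited reference by exactly the kind of device you describe.

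The genuine divergence is in Step~2. The paper does \emph{not} go through $C^{1,\alpha}_0(\close)$ compactness: instead it takes the weak limit $u_n\weak\tilde u_\lambda$ in $\W$, tests the equation for $u_n$ with $h=u_n-\tilde u_\lambda$, and uses the $(\Ss)_+$-property of $A_{p(\cdot)}$ (Proposition~\ref{proposition_2}) to upgrade to strong convergence in $\W$, after which the limit passage is immediate and $\tilde u_\lambda\in\interior$ follows from $\mathcal S_\lambda\subseteq\interior$. Your route --- uniform $C^{1,\alpha}_0(\close)$ bounds via the singular regularity theory, then compact embedding into $C^1_0(\close)$ --- mirrors the argument of Proposition~\ref{proposition_5} and yields the stronger $C^1$ convergence directly, at the price of invoking the regularity machinery of \cite{20-Saoudi-Ghanmi-2017} and \cite{6-Fukagai-Narukawa-2007} once more. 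The paper's approach is softer, staying within the variational framework and deferring regularity to the final membership $\tilde u_\lambda\in\mathcal S_\lambda\subseteq\interior$; yours is more hands-on but equally valid.
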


\begin{proof}
	As in the proof of Proposition 18 in Papageorgiou-R\u{a}dulescu-Repov\v{s} \cite{16-Papageorgiou-Radulescu-Repovs-2017}, we show that the set $\mathcal{S}_\lambda$ is downward directed, that is, if $u,v\in\mathcal{S}_\lambda$, then there exists $y\in\mathcal{S}_\lambda$ such that $y\leq u$ and $y \leq v$. Invoking Lemma 3.10 of Hu-Papageorgiou \cite[p.\,178]{11-Hu-Papageorgiou-1997}, we can find a decreasing sequence $\{u_n\}_{n\in\N}\subseteq \mathcal{S}_\lambda$ such that
	\begin{align}\label{56}
		\inf \mathcal{S}_\lambda=\inf_{n\in\N}u_n \quad\text{and}\quad \overline{u}_\lambda\leq u_n\leq u_1 \quad\text{for all }n\in\N.
	\end{align}
	
	From \eqref{56} it follows that the sequence $\{u_n\}_{n\in\N}\subseteq\W$ is bounded. So we may assume that
	\begin{align}\label{57}
		u_n\weak \tilde{u}_\lambda \quad\text{in }\W\quad\text{and}\quad u_n\to\tilde{u}_\lambda \quad\text{in }\LP.
	\end{align}
	Since $u_n\in\mathcal{S}_\lambda$, we have
	\begin{align}\label{58}
		\l \lan A_{p(\cdot)}\l(u_n\r),h\r\ran+\l \lan A_{q(\cdot)}\l(u_n\r),h\r\ran
		=\into\lambda \l[u_n^{-\eta(x)}+f(x,u_n)\r]h\diff x
	\end{align}
	for all $h \in \Wpzero{p(\cdot)}$ and for all $n \in \N$. Note that 
	\begin{align*}
		0 \leq u_n^{-\eta(\cdot)} \leq \overline{u}_\lambda^{-\eta(\cdot)} \in \Lp{1},
	\end{align*}
	see Lazer-McKenna \cite{12-Lazer-McKenna-1991}.
	
	We choose $h=u_n-\tilde{u}_\lambda\in\W\cap \Linf$ in \eqref{58}, pass to the limit as $n\to\infty$ and apply \eqref{57}. This yields
	\begin{align*}
		\limsup_{n\to\infty} \l\lan A_{p(\cdot)}(u_n),u_n-\tilde{u}_\lambda\r\ran\leq 0.
	\end{align*}
	Then, from Proposition \ref{proposition_2}, it follows
	\begin{align}\label{59}
		u_n\to \tilde{u}_\lambda \quad\text{in }\W\quad\text{and}\quad\overline{u}_\lambda \leq \tilde{u}_\lambda.
	\end{align}
	
	Passing to the limit in \eqref{58} as $n\to\infty$ and using \eqref{59}, we obtain
	\begin{align*}
		\tilde{u}_\lambda \in\mathcal{S}_\lambda \subseteq \interior\quad \text{and}\quad \tilde{u}_\lambda=\inf \mathcal{S}_\lambda.
	\end{align*}
\end{proof}

\begin{proposition}
	If hypotheses H$_0$ and H$_1$ hold, then the map $\lambda \to \tilde{u}_\lambda$ from $\mathcal{L}=(0,\lambda^*]$ into $C^1_0(\close)$ is
	\begin{enumerate}
		\item[(a)]
			strictly increasing, that is, $0<\lambda'<\lambda$ implies $\tilde{u}_\lambda-\tilde{u}_\lambda\in\interior$;
		\item[(b)]
			left continuous.
	\end{enumerate}
\end{proposition}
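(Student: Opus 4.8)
The plan is to establish (a) first and then deduce (b) from it. For the monotonicity in (a), most of the work is already done in Proposition~\ref{proposition_10}: given $0<\lambda'<\lambda$ in $\mathcal{L}$ and the minimal solution $\tilde u_\lambda\in\mathcal{S}_\lambda\subseteq\interior$, that proposition furnishes an element $u_{\lambda'}\in\mathcal{S}_{\lambda'}$ with $\tilde u_\lambda-u_{\lambda'}\in\interior$. I would then invoke the minimality of $\tilde u_{\lambda'}$ in $\mathcal{S}_{\lambda'}$ to get $\tilde u_{\lambda'}\le u_{\lambda'}$, hence $u_{\lambda'}-\tilde u_{\lambda'}\in C^1_0(\close)_+$, and conclude
\[
\tilde u_\lambda-\tilde u_{\lambda'}=\bigl(\tilde u_\lambda-u_{\lambda'}\bigr)+\bigl(u_{\lambda'}-\tilde u_{\lambda'}\bigr)\in\interior+C^1_0(\close)_+\subseteq\interior,
\]
using the standard fact that adding a nonnegative $C^1_0(\close)$-function to an element of $\interior$ keeps it in $\interior$ (both the positivity in $\Omega$ and the strict sign of the inner normal derivative on $\partial\Omega$ are preserved). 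In particular this yields $\tilde u_{\lambda'}\le\tilde u_\lambda$, which is what I need in (b).

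For the left continuity in (b), fix $\lambda\in(0,\lambda^*]$ and a sequence $\lambda_n\nearrow\lambda$ with $\lambda_n\in\mathcal{L}$ and $\lambda_1\le\lambda_n$. By part (a) the sequence $\{\tilde u_{\lambda_n}\}_{n\in\N}$ is increasing in $C^1_0(\close)$ and satisfies $\overline u_{\lambda_1}\le\tilde u_{\lambda_n}\le\tilde u_\lambda$ for all $n$. Testing the equation for $\tilde u_{\lambda_n}$ with $\tilde u_{\lambda_n}$ itself, bounding the singular term by $\tilde u_{\lambda_n}^{-\eta(\cdot)}\le\overline u_{\lambda_1}^{-\eta(\cdot)}$ and the reaction term by hypothesis H$_1$(i) together with the uniform $\Linf$-bound $\tilde u_{\lambda_n}\le\tilde u_\lambda$, and then using the anisotropic Hardy inequality and Proposition~\ref{proposition_1} exactly as in the proof of Proposition~\ref{proposition_5}, I would obtain that $\{\tilde u_{\lambda_n}\}_{n\in\N}\subseteq\W$ is bounded. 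Lemma~A.5 of Saoudi-Ghanmi \cite{20-Saoudi-Ghanmi-2017} then gives a uniform $\Linf$-bound and Lemma~3.3 of Fukagai-Narukawa \cite{6-Fukagai-Narukawa-2007} a uniform bound in $C^{1,\alpha}_0(\close)$ for some $\alpha\in(0,1)$; by the compact embedding $C^{1,\alpha}_0(\close)\hookrightarrow C^1_0(\close)$ I may pass to a subsequence with $\tilde u_{\lambda_n}\to\hat u$ in $C^1_0(\close)$, and then $\hat u\ge\overline u_{\lambda_1}$, so $\hat u\in\interior$.

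It remains to pass to the limit in the weak formulation of \eqref{problem} for $\tilde u_{\lambda_n}$; the only delicate term is the singular one, which I would handle with Vitali's theorem as in the proof of Proposition~\ref{proposition_5}, the uniform integrability of $h/[\tilde u_{\lambda_n}]^{\eta(\cdot)}$ following from the bound by $h/\overline u_{\lambda_1}^{\eta(\cdot)}\in\Lp{1}$ via the anisotropic Hardy inequality. This gives $\hat u\in\mathcal{S}_\lambda$, whence $\tilde u_\lambda\le\hat u$ by minimality, while $\tilde u_{\lambda_n}\le\tilde u_\lambda$ passes in the limit to $\hat u\le\tilde u_\lambda$; therefore $\hat u=\tilde u_\lambda$. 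Since every subsequence of $\{\tilde u_{\lambda_n}\}$ admits a further subsequence converging in $C^1_0(\close)$ to this same limit, the Urysohn subsequence principle gives $\tilde u_{\lambda_n}\to\tilde u_\lambda$ in $C^1_0(\close)$, i.e.\ left continuity of $\lambda\to\tilde u_\lambda$. I expect the passage to the limit in the singular term to be the only nonroutine step, but it is dispatched by the same Vitali/Hardy argument already used twice earlier in the paper.
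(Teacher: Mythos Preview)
Your proposal is correct and follows essentially the same route as the paper. For (a) the paper simply invokes Proposition~\ref{proposition_10}; you spell out the extra step (minimality of $\tilde u_{\lambda'}$ plus $\interior+C^1_0(\close)_+\subseteq\interior$) that makes this work, which is a worthwhile clarification. For (b) the paper argues identically---order bounds $\overline u_{\lambda_1}\le\tilde u_{\lambda_n}\le\tilde u_\lambda$, $C^{1,\alpha}_0$-compactness, passage to the limit, Urysohn---except that it identifies $\hat u=\tilde u_\lambda$ by a short contradiction (if $\tilde u_\lambda(x)<\hat u(x)$ at some point then eventually $\tilde u_{\lambda_n}(x)>\tilde u_\lambda(x)$, contradicting (a)) rather than your direct two-sided inequality; the two arguments are equivalent.
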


\begin{proof}
	(a) This is an immediate consequence of Proposition \ref{proposition_10}.
	
	(b) Let $\{\lambda_n\}_{n\in\N}\subseteq \mathcal{L}$ be a sequence such that $\lambda_n\to \lambda^-$. We have 
	\begin{align*}
		\overline{u}_{\lambda_1} \leq \tilde{u}_{\lambda_n} \leq \tilde{u}_\lambda\quad\text{for all }n\in\N.
	\end{align*}
	Hence, $\{\tilde{u}_{\lambda_n}\}_{n\in\N}\subseteq\W$ is bounded.
	
	Then, as before, see the proof of Proposition \ref{proposition_5}, via the anisotropic regularity theory, there exist $\alpha\in(0,1)$ and $c_7>0$ such that
	\begin{align}\label{60}
		\tilde{u}_{\lambda_n}\in C^{1,\alpha}_0(\close)\quad \text{and}\quad \l\|\tilde{u}_{\lambda_n}\r\|_{C^{1,\alpha}_0(\close)}\leq c_7\quad\text{for all }n\in\N.
	\end{align}
	Since $C^{1,\alpha}_0(\close)$ is compactly embedded into $C^{1}_0(\close)$, from \eqref{60} it follows that we have at least for a subsequence
	\begin{align}\label{61}
		\tilde{u}_{\lambda_n}\to \hat{u}_\lambda \quad\text{in }C^{1}_0(\close) \quad\text{and}\quad \hat{u}_\lambda \in\mathcal{S}_\lambda\subseteq\interior.
	\end{align}

	Suppose that $\hat{u}_\lambda\neq \tilde{u}_\lambda$. Then there exists $x\in\Omega$ such that $\tilde{u}_\lambda(x)<\hat{u}_\lambda(x)$. Then
	\begin{align*}
		\tilde{u}_\lambda(x)<\tilde{u}_{\lambda_n}(x)\quad\text{for all }n\in\N,
	\end{align*}
	see \eqref{61}. But this contradicts (a). Hence, $\hat{u}_\lambda= \tilde{u}_\lambda$ and by Urysohn's criterion for convergent sequences, we have $\tilde{u}_{\lambda_n}\to \hat{u}_\lambda$ in $C^{1}_0(\close)$ for the initial sequence. Therefore, $\lambda \to \tilde{u}_\lambda$ is left continuous from $\mathcal{L}=(0,\lambda^*]$ into $C^1_0(\close)$.
\end{proof}

\end{document}